\newtheorem{theorem}{Theorem}[section]
\newtheorem{lemma}[theorem]{Lemma}
\theoremstyle{definition}
\newtheorem{definition}[theorem]{Definition}
\title[A free boundary problem with migration into rubber] % Running head is the full title or shortened version of the full title. This will appear at the top of odd pages. Please make sure it fits within the width limit.
      {Local existence of a solution to a free boundary problem describing migration into rubber with a breaking effect} % Only the first word and proper nouns should be capitalized
\author[Kota Kumazaki, Toyohiko Aiki and Adrian Muntean]{}
\subjclass{Primary: 35R35; Secondary: 35K61}
 \keywords{Migration into rubber, Free boundary problem,  Nonlinear initial-boundary value problem for nonlinear parabolic equations, existence of solutions, Flux boundary condition}
 \email{k.kumazaki@nagasaki-u.ac.jp}
 \email{aikit@fc.jwu.ac.jp}
 \email{adrian.muntean@kau.se}
\begin{document}
\maketitle

% Enter the first author's name and address:
\centerline{\scshape Kota Kumazaki$^*$}
\medskip
{\footnotesize
% Enter the address of the first author
 \centerline{Nagasaki University}
   \centerline{Faculity of Education}
   \centerline{1-14 Bunkyo-cho, Nagasaki-city, Nagasaki, 851-8521, Japan}
} % Do not forget to end {\footnotesize with the sign }

\medskip

\centerline{\scshape Toyohiko Aiki}
\medskip
{\footnotesize
 % Enter the address of the second and third authors
 \centerline{Japan Women's University}
   \centerline{Department of Mathematics Faculty of Scicence}
   \centerline{2-8-1, Mejirodai, Bunkyo-ku, Tokyo, 112-8681, Japan }
}

\medskip

\centerline{\scshape Adrian Muntean}
\medskip
{\footnotesize
 % Enter the address of the second and third authors
 \centerline{Karlstads University}
   \centerline{Department of Mathematics and Computer Science}
   \centerline{Universitetsgatan 2, 651 88 Karlstad, Sweden}
}

\bigskip

% The name of the associate editor will be entered by AIMS editorial staff.
% "Communicated by the associate editor name" is not needed for special issue.
% \centerline{(Communicated by the associate editor name)}

%\title{Local existence of a solution to a free boundary problem describing migration into rubber with a breaking effect}

%\author{Kota Kumazaki\\
%Faculty of Education, Nagasaki University,\\
%1-14 Bunkyo-cho, Nagasaki-city, Nagasaki, 851-8521, Japan\\
%k.kumazaki@nagasaki-u.ac.jp\\
%Toyohiko Aiki\\
%Department of Mathematics Faculty of Sciences, Japan Women's University,\\
%2-8-1, Mejirodai, Bunkyo-ku, Tokyo, 112-8681, Japan \\
%aikit@fc.jwc.ac.jp\\
%Adrian Muntean\\
%Department of Mathematics and Computer Science, Karlstads University,\\
%Universitetsgatan 2, 651 88 Karlstad, Sweden\\
%adrian.muntean@kau.se
%}

%The abstract of your paper
\begin{abstract}
We consider a one-dimensional free boundary problem describing the migration of diffusants into rubber. 
In our setting, the free boundary represents the position of the front delimitating the diffusant region. The growth rate of this region is described by an ordinary differential equation that includes the effect of breaking the growth of the diffusant region. In this specific context, the breaking mechanism is assumed to be the hyperelastic response to a too fast diffusion penetration.

In recent works, we considered a similar class of free boundary problems modeling diffusants penetration in rubbers, but without attempting to deal with the possibility of breaking or accelerating the occurring free boundaries.  For simplified settings, we were able to show the global existence and uniqueness as well as the large time behavior of the corresponding solutions to our formulations.  Since here the breaking effect is contained in the free boundary condition, our previous results are not anymore applicable. The main mathematical obstacle in ensuring the existence of a solution is the non-monotonic structure of the free boundary. 

In this paper, we establish the existence and uniqueness of a weak solution to the free boundary problem with breaking effect and give explicitly the maximum value that the free boundary can reach.
\end{abstract}

%The title of your section 1
\section{Introduction}
\label{intro}
Both natural rubber and synthetic elastomers are widely used in engineering applications, such as connecting components for offshore wind farms that are nowadays intensively used to harvesting energy. Due to the growing importance of good theoretical estimations of durability properties of elastomeric parts, a well-trusted modelling of the material behaviour is an essential prerequisite so that numerical prediction are available for large times.
Rubber-like materials exhibit a strongly non-linear behaviour characterized by a joint large strain and a non-linear stress-strain response \cite{Chagnon}. In the presence of aggressive environments  (like those where large bodies of water are present), the nonlinear behavior of elastomers, which can be in fact studied in controlled laboratory conditions,  alters non-intuitively \cite{Brunier}. The main reason for this situation is that the porous structure of rubber-like materials allows small particles to diffuse inside. Ingressing particles, often ionically charged,  accumulate and interact with the internal solid fabric. Such interactions lead to unwanted alterations of the originally designed mechanical behavior; see e.g. \cite{Ren} for more context on this physical problem. From the modeling and mathematical upscaling point of view, the derivation of the correct model equations for such problem setting is open. 

In our recent works \cite{AKM, KA}, and \cite{KA2}, we considered a class of one-dimensional free boundary problems to model the diffusants penetration in dense and foam rubbers. The use of kinetic-type free boundary conditions is meant to avoid the explicit description of the mechanics of the involved materials and how the constitutive laws (in particular, the stress tensor) are affected by the presence of diffusants. One can use such free boundary formulations to derive theoretically-grounded practical estimations of the position of the diffusants penetration front and compare them with laboratory experiments; see e.g. \cite{NMKAMWG} where a parameter identification exercise has been performed in this context. 

The main novelty introduced in the present paper is that the growth rate of the region filled with diffusants is described by an ordinary differential equation that includes the effect of breaking its growth. The breaking mechanism should be seen here as the hyperelastic response to a too fast diffusion penetration. Such mechanism introduces non-monotonicity in our problem formulation. This makes us wonder whether this type of problems has a chance to be well-posed and, if true,  in which sense?  

To address the question, we consider the following one-dimensional free boundary problem (\ref{1-1})-(\ref{1-5}). Let $t\in [0, T]$ be the time variable for $T>0$. The problem is to find a curve $z=s(t)$ on $[0, T]$ and a function $u$ defined on the set $Q_s(T):=\{(t, z) | 0<t<T, \ 0<z<s(t) \}$ such that the following system of equations is satisfied: 
\begin{align}
& u_t-u_{zz}=0 \mbox{ for }(t, z)\in Q_s(T), \label{1-1} \tag{1.1} \\
& -u_z(t, 0)=\beta(b(t)-\gamma u(t, 0)) \mbox{ for }t\in(0, T), \label{1-2} \tag{1.2}\\
& -u_z(t, s(t))=\sigma(u(t, s(t)))s_t(t) \mbox{ for }t\in (0, T), \label{1-3} \tag{1.3} \\
& s_t(t)=a_0 (\sigma(u(t, s(t)))-\alpha s(t)) \mbox{ for }t\in (0, T), \label{1-4} \tag{1.4} \\
& s(0)=s_0, u(0, z)=u_0(z) \mbox{ for }z \in [0, s_0], \label{1-5} \tag{1.5}
\end{align}
where $\beta$, $\gamma$, $a_0$ and $\alpha$ are given positive constants, $b$ is a given function on $[0, T]$ and $s_0$ and $u_0$ are the initial data. In (\ref{1-3}) and (\ref{1-4}), $\sigma$ is the function on $\mathbb{R}$ called the positive part, i.e. it is given by 
\begin{equation}
\sigma(r)=
\begin{cases}
r \quad \mbox{ if }r\geq 0,\\
0 \quad \mbox{ if }r<0.
\end{cases} \tag{1.6}
\end{equation}

%Denote the problem \{(\ref{1-1})-(\ref{1-5})\} by $(\mbox{P})(u_0, s_0, b)$. 
The problem (\ref{1-1})-(\ref{1-5}), which is denoted here by $(\mbox{P})(u_0, s_0, b)$, was proposed by S. Nepal et al. in \cite{NMKAMWG} as a mathematical model describing the migration of diffusants into rubber. The set $[0,s(t)]$ represents the region occupied by a solvent occupying the one-dimensional pore $[0,\infty)$, where $s = s(t)$ is the position of moving interface of the region and $u = u(t, z)$ represents the content of the diffusant at the place $z \in [0,s(t)]$ at time $t>0$. %In \cite{NMKAMWG} the behavior of the free boundary $s$ is discussed numerically.

%Denote the problem \{(\ref{1-1})-(\ref{1-5})\} by $(\mbox{P})(u_0, s_0, b)$. 
From the mathematical point of view, $(\mbox{P})(u_0, s_0, b)$ is a one-phase free boundary problem with Robin-type boundary conditions at both boundaries. Such kind of problem structure has been considered in \cite{KA, KA2}, and \cite{AKM}. In our recent work \cite{AKM}, as a simplified setting for $(\mbox{P})(u_0, s_0, b)$, we consider the case $\alpha=0$ and prove the existence and uniqueness of a globally-in-time solution. Furthermore, what concerns the large time behavior of a solution, we show that the free boundary $s$ goes to infinity as time elapses.  
%For the case $\alpha>0$,  
%the following one dimensional free boundary problem which is a mathematical model describing water swelling in porous materials is considered in \cite{KA, KA2}: 

In this paper, we consider the case $\alpha >0$. As already anticipated,  the difficulty in this case is the lack of the monotonicity of the boundary condition imposed on the moving boundary.
%from (\ref{1-4}) it is not clear that the growth rate $s_t$ of the free boundary $s$ is non-negative. 
%Due to this, 
%it is difficult to prove the existence of a solution of $(\mbox{P})(u_0, s_0, b)$. 
%since the stopping effect $-\alpha s(t)$ in (\ref{1-4}) works with the glowth of the free boundary $s$, we infer that the free boundary $s$ keeps a finite length. However, 
Indeed, similarly to the proof of the existence result in \cite{AKM}, %in arguing the existence of solutions, %we fix the moving domain. 
%For $(\mbox{P})(u_0, s_0, b)$ 
by introducing a variable $\tilde{u}(t, y)=u(t, ys(t))$ for $(t, y)\in Q(T):=(0, T)\times (0, 1)$ we transform $(\mbox{P})(u_0, s_0, b)$ into a problem on the fixed domain $Q(T)$. Let denote this problem by $(\mbox{PC})(\tilde{u}_0, s_0, b)$, where $\tilde{u}_0(y)=u_0(s_0y)$ for $y\in [0, 1]$. 
%\begin{align*}
%& \tilde{u}_t(t, y)-\frac{1}{s^2(t)}\tilde{u}_{yy}(t, y)=\frac{ys_t(t)}{s(t)}\tilde{u}_y(t, y) \mbox{ for }(t, y)\in Q(T), \\ %\label{1-6} \\
%& -\frac{1}{s(t)}\tilde{u}_y(t, 0)=\beta(b(t)-\gamma \tilde{u}(t, 0)) \mbox{ for }t\in(0, T), \\ %\label{1-7} \\
%& -\frac{1}{s(t)}\tilde{u}_y(t, 1)=a_0 \sigma(\tilde{u}(t, 1))s_t(t)\mbox{ for }t\in (0, T),  \\ %\label{1-8} \\
%& s_t(t)=a_0\sigma(\tilde{u}(t, 1))(\sigma(\tilde{u}(t, 1))-\alpha s(t))  \mbox{ for }t\in (0, T), \\ %\label{1-9}\\
%& \tilde{u}(0, y)=u_0(y s(0))(:=\tilde{u}_0(y)) \mbox{ for }y \in [0, 1]. %\label{1-10}
%\end{align*}
%For $(\mbox{PC})(\tilde{u}_0, s_0, b)$, 
Moreover, to find a solution $(\mbox{PC})(\tilde{u}_0, s_0, b)$ we first consider the following auxiliary problem $(\mbox{AP})(\tilde{u}_0, s, b)$: For a given function $s(t)$ on $[0, T]$, find $\tilde{u}(t, y)$ satisfying: 
\begin{align*}
& \tilde{u}_t(t, y)-\frac{1}{s^2(t)}\tilde{u}_{yy}(t, y)=\frac{ys_t(t)}{s(t)}\tilde{u}_y(t, y) \mbox{ for }(t, y)\in Q(T), \\ %\label{1-7} \\
& -\frac{1}{s(t)}\tilde{u}_y(t, 0)=\beta(b(t)-\gamma \tilde{u}(t, 0)) \mbox{ for }t\in(0, T), \\ % \label{1-8}\\
& -\frac{1}{s(t)}\tilde{u}_y(t, 1)=a_0 \sigma(\tilde{u}(t, 1))(\sigma(\tilde{u}(t, 1))-\alpha s(t)) \mbox{ for }t\in (0, T),  \\ % \label{1-9}\\
& \tilde{u}(0, y)=\tilde{u}_0(y) \mbox{ for }y \in [0, 1]. %\label{1-10}
\end{align*}

Here, we set $g_{\alpha}(r)=a_0 \sigma(r)(\sigma(r)-\alpha s(t))$ for $r\in \mathbb{R}$ and $t\in (0,T)$. In the case $\alpha=0$, due to the function $\sigma$, $g_{0}(r)=a_0(\sigma(r))^2$ is monotonically increasing with respect to $r$. 
%Accordingly, %$(\mbox{AP})(\tilde{u}_0, s, b)$ can be written by a Cauchy problem consisting of a evolution equation associated by the subdifferntial of a convex function and the initial data (\ref{1-10}). %namely, its as the gradient of a convex functions
%namely, $r\to \int_0^r g_0(\xi) d\xi$ is a convex function on $\mathbb{R}$. 
Due to this fact, 
we can use the theory of evolution equations governed by subdifferentials of convex functions (cf. \cite{kenmochi} and references therein) and find a solution $\tilde{u}$ of $(\mbox{AP})(\tilde{u}_0, s, b)$ in the case $\alpha=0$. However, looking now at the case $\alpha>0$, since $g_{\alpha}(r)$ is not monotonic anymore with respect to $r$, we cannot apply the theory of evolution equations directly. Hence, the existence of a solution to $(\mbox{AP})(\tilde{u}_0, s, b)$ in the case $\alpha>0$ is not at all clear. As far as we are aware, the type of free boundary problems is novel. %We will develop within this framework a methodology to handle them in the 1D case. 
We refer the reader to \cite{Visintin} (and related references) for explanations of how and why free-boundary problems can be used to model fast transitions in materials.
%Here, we define a solution in a weak formulation. 

%This is the difficulty in the case $\alpha>0$.

The purpose of this paper is to establish a methodology to deal with the existence of locally-in-time solutions to $(\mbox{PC})(u_0, s_0, b)$ in the case $\alpha >0$. 
For this to happen, we consider  weak solutions to $(\mbox{PC})(\tilde{u}_0, s_0, b)$. The definition of our concept of weak solution is explained in Section 2. As next step, we proceed in the following way:  For a given function $\eta$ on $Q(T)$ and $\varepsilon>0$, we consider in Section 3 a smooth approximation $\eta_{\varepsilon} $ of $\eta$ and construct a solution $\tilde{u}$ to the following auxiliary problem: For given $s$ on $[0, T]$, find $\tilde{u}(t, y)$ such that it holds
\begin{align*}
& \tilde{u}_t(t, y)-\frac{1}{s^2(t)}\tilde{u}_{yy}(t, y)=\frac{ys_t(t)}{s(t)}\tilde{u}_y(t, y) \mbox{ for }(t, y)\in Q(T), \\
& -\frac{1}{s(t)}\tilde{u}_y(t, 0)=\beta(b(t)-\gamma \tilde{u}(t, 0)) \mbox{ for }t\in(0, T), \\
& -\frac{1}{s(t)}\tilde{u}_y(t, 1)=a_0 (\sigma(\tilde{u}(t, 1)))^2-\alpha \sigma(\eta_{\varepsilon}(t)) s(t)) \mbox{ for }t\in (0, T), \\
& \tilde{u}(0, y)=\tilde{u}_0(y) \mbox{ for }y \in [0, 1]. 
\end{align*}
The plan is to obtain uniform estimates of solutions with respect to $\varepsilon$. After that, by the limiting process $\varepsilon \to 0$ and benefitting of Banach's fixed point theorem, we wish to construct a weak solution $\tilde{u}$ of $(\mbox{AP})(\tilde{u}_0, s, b)$. 
In Section 4, we define a solution mapping $\Gamma_T$ between $s$ and the weak solution $\tilde{u}$ of $(\mbox{AP})(\tilde{u}_0, s, b)$. We show that,  for some $T'\leq T$, the mapping $\Gamma_{T'}$ is a contraction mapping on a suitable function space. Finally,  using Banach's fixed point theorem, we prove the existence and uniqueness of a weak solution to the coupled problem $(s, \tilde{u})$ of $(\mbox{PC})(\tilde{u}_0, s_0, b)$ on $[0, T']$. Additionally, we show that the maximal length of the free boundary $s$ is {\em a priori} determined by given parameters, time derivative of the function $b$, and initial data $s_0$. Moreover, we guarantee that the solution $\tilde{u}$ is non-negative and bounded on $Q(T)$. 
%%%%%%%%%%%%%%%%%%%%%%%%%%%%%%%%%%%%%%%%%%%%%%%%%%%%%%%%%%%%%%%%%%%%%%%%%%%%%%%%%%%%%%%%%
%
%
%  // section 2
%
%
%%%%%%%%%%%%%%%%%%%%%%%%%%%%%%%%%%%%%%%%%%%%%%%%%%%%%%%%%%%%%%%%%%%%%%%%%%%%%%%%%%%%%%%%%%
\section{Notation, assumptions and results}
\label{na}

In this paper, we use the following notations. We denote by $| \cdot |_X$ the norm for a Banach space $X$. The norm and the inner product of a Hilbert space $H$ are
 denoted by $|\cdot |_H$ and $( \cdot, \cdot )_H$, respectively. Particularly, for $\Omega \subset {\mathbb R}$, we use the notation of the usual Hilbert spaces $L^2(\Omega)$, $H^1(\Omega)$, and $H^2(\Omega)$.
Throughout this paper, we assume the following parameters and functions:

(A1) $a_0$, $\alpha$, $\gamma$, $\beta$ and $T$ are positive constants.

(A2) $s_0>0$ and $u_0\in L^{\infty}(0, s_0)$ such that $u_0 \geq 0$ on $[0, s_0]$.

(A3) $b\in W^{1,2}(0, T)$ with $b \geq 0$ on $(0, T)$. Also, we set 
\begin{align*}
\displaystyle{b^*=\max \biggl \{\max_{0\leq t\leq T}b(t), r|u_0|_{L^{\infty}(0, s_0)}} \biggr \}.
\end{align*}
%$b(t) \leq b^*$ on $[0, T]$ and $u_0 \leq b^*/\gamma $ on $[0, s_0]$.
%:=\{(t, z) | 0\leq t\leq T, 0<z<s(t)\}$.
%Next, we define our concept of solution to (P)$(u_0, s_0, b)$ on $[0,T]$ in the following way:
%\begin{defin}
%\label{def0}
%For $T>0$, let $s$ be a function on $[0, T]$ and $u$ be a function on $Q_s(T)$. We call that pair $(s, u)$ a solution to (P)$(u_0, s_0, b)$ on $[0, T]$ if the following conditions (S1)-(S6) hold:
%(S1) $s\in W^{1,\infty}(0, T)$, $0<s$ on $[0, T]$, $u\in L^{\infty}(Q_s(T))$, $u_t$, $u_{zz}\in L^2(Q_s(T))$ and $t \in [0, T] \to |u_z(t,  \cdot)|_{L^2(0, s(t))}$ is bounded;
%(S2) $u_t-u_{zz}=0$ on $Q_s(T)$;
%(S3) $-u_z(t, 0)=\beta(b(t)-\gamma u(t, 0))$ for a.e. $t\in [0, T]$;
%(S4) $-u_z(t, s(t))=u(t, s(t))s_t(t)$ for a.e. $t\in [0, T]$;
%(S5) $s_t(t)=a_0 (\sigma(u(t, s(t))-\alpha s(t)) $ for a.e. $t\in [0, T]$;
%(S6) $s(0)=s_0$ and $u(0, z)=u_0(z)$ for $z \in [0, s_0]$.
%\end{defin}

%The first result of this paper is concerned with the existence and uniqueness of a  locally-in-time solution  in the sense of Definition \ref{def0} to the problem (P)$(u_0, s_0, b)$.
%\begin{thm}
%\label{t1}
%Let $T>0$. If (A1)-(A3) hold, then there exists $T^* \leq T$ such that (P)$(u_0, s_0, b)$ has a unique solution $(s, u)$ on $[0, T^*]$ satisfying $s_t(t)\geq 0$ for $t\in [0, T]$ and $0 \leq u \leq u^*$ on $Q_s(T^*)$, where $u^*$ is a positive constant which is independent of $T$. 
%\end{thm}
%To prove Theorem \ref{t1}, 
It is convenient to consider (P)$(u_0, s_0, b)$ transformed into a non-cylindrical domain. Let $T>0$. For given $s\in W^{1,2}(0, T)$ with $s(t)>0$ on $[0, T]$, we introduce the following new function obtained by the change of variables and fix the moving domain:
\begin{align}
\label{2-0}
\tilde{u}(t, y)=u(t, ys(t)) \mbox{ for } (t, y)\in Q(T):=(0, T)\times (0, 1). \tag{2.1} 
\end{align}
By using the function $\tilde{u}$,
(P)$(u_0, s_0, b)$ becomes the following problem (PC)$(\tilde{u}_0, s_0, b)$ posed on the non-cylindrical domain $Q(T)$:
\begin{align}
& \tilde{u}_t(t, y)-\frac{1}{s^2(t)}\tilde{u}_{yy}(t, y)=\frac{ys_t(t)}{s(t)}\tilde{u}_y(t, y) \mbox{ for }(t, y)\in Q(T), \label{2-1} \tag{2.2}\\
& -\frac{1}{s(t)}\tilde{u}_y(t, 0)=\beta(b(t)-\gamma \tilde{u}(t, 0)) \mbox{ for }t\in(0, T), \label{2-2} \tag{2.3}\\
& -\frac{1}{s(t)}\tilde{u}_y(t, 1)=\sigma(\tilde{u}(t, 1))s_t(t) \mbox{ for }t\in (0, T),  \label{2-3} \tag{2.4}\\
& s_t(t)=a_0 (\sigma(\tilde{u}(t, 1))-\alpha s(t)) \mbox{ for }t\in (0, T), \label{2-4} \tag{2.5} \\
& s(0)=s_0, \label{2-5} \tag{2.6} \\
& \tilde{u}(0, y)=u_0(y s(0))(:=\tilde{u}_0(y)) \mbox{ for }y \in [0, 1]. \label{2-6} \tag{2.7}
\end{align}

Here, we introduce the following function space: For $T>0$, we put $H=L^2(0, 1)$, $X=H^1(0, 1)$, $V(T)=L^{\infty}(0, T; H)\cap L^2(0, T; X)$ and $|z|_{V(T)}=|z|_{L^{\infty}(0, T; H)} + |z_y|_{L^2(0, T; H)}$ for $z\in V(T)$. Note that $V(T)$ is a Banach space with the norm $|\cdot|_{V(T)}$. Also, we denote by $X^*$ and $\langle \cdot, \cdot \rangle_{X}$ the dual space of $X$ and the duality pairing between $X$ and $X^*$, respectively.

We define now our concept of solutions to (PC)$(\tilde{u}_0, s_0, b)$ on $[0,T]$ in the following way:
\begin{definition}
\label{def1}
For $T>0$, let $s$ be a function on $[0, T]$ and $\tilde{u}$ be a function on $Q(T)$, respectively. We call that a pair $(s, \tilde{u})$ is a solution of $(\mbox{P})(\tilde{u}_0, s_0, b)$ on $[0, T]$ if the next conditions (S1)-(S4) hold:

(S1) $s \in W^{1, \infty}(0, T)$, $s>0$ on $[0, T]$, $\tilde{u}\in W^{1,2}(0, T; X^*)\cap V(T)$.

(S2) 
\begin{align*}
&\int_0^T \langle \tilde{u}_t(t), z(t) \rangle_X dt + \int_{Q(T)} \frac{1}{s^2(t)} \tilde{u}_{y}(t) z_y(t) dydt + \int_0^T \frac{1}{s(t)}\sigma(\tilde{u}(t, 1))s_t(t) z(t, 1)dt \\
&-\int_0^T \frac{1}{s(t)} \beta(b(t)-\gamma \tilde{u}(t, 0)) z(t, 0) dt = \int_{Q(T)} \frac{ys_t(t)}{s(t)}\tilde{u}_y(t) z(t) dydt
\mbox{ for }z\in V(T). 
\end{align*}

(S3) $s_t(t)=a_0 (\sigma(\tilde{u}(t, 1))-\alpha s(t)) \mbox{ for a.e. }t\in (0, T)$.

(S4) $s(0)=s_0$ and $\tilde{u}(0, y)=\tilde{u}_0(y)$ for a.e. $y\in [0, 1]$.
\end{definition}

The main result of this paper is the existence and uniqueness of a locally-in-time solution of $(\mbox{PC})(\tilde{u}_0, s_0, b)$. We state this result in the next theorem.

\begin{theorem}
\label{t2}
Let $T>0$. If (A1)-(A3) hold, then there exists $T^*\leq T$ such that $(\mbox{PC})(\tilde{u}_0, s_0, b)$ has a unique solution $(s, \tilde{u})$ on $[0, T^*]$. Moreover, the function $\tilde{u}$ is non-negative and bounded on $Q(T)$.
\end{theorem}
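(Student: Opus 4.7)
The plan is to follow a two-level fixed-point strategy. At the outer level, I would define a solution operator $\Gamma_{T'}$ on a closed subset $\mathcal{S}(T')$ of $W^{1,\infty}(0,T')$ consisting of curves $s$ with $s(0)=s_0$, $0<s_*\le s(t)\le s^*$, and $|s_t|_{L^\infty(0,T')}\le L$, by first solving the auxiliary problem $(\mathrm{AP})(\tilde u_0,s,b)$ for the frozen boundary $s$ and then producing a new curve $\hat s$ through the ODE (S3) using the trace $\tilde u(\cdot,1)$. A fixed point of $\Gamma_{T'}$ then supplies a solution of $(\mathrm{PC})(\tilde u_0,s_0,b)$ in the sense of Definition~\ref{def1}. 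The constants $s_*, s^*, L$ are to be chosen in terms of the a~priori $L^\infty$-bound derived below so that $\mathcal{S}(T')$ is invariant under $\Gamma_{T'}$ for all $T'\le T$.

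At the inner level, the auxiliary problem must be solved despite the non-monotone nonlinearity $g_\alpha$ at $y=1$. Following the scheme announced in the introduction, I would for each $\varepsilon>0$ replace the $\alpha s(t)$-contribution inside the $y=1$ boundary term by $a_0\alpha\sigma(\eta_\varepsilon(t))s(t)$, where $\eta_\varepsilon$ is a temporal mollification of an auxiliary function $\eta$. Because the remaining term $a_0(\sigma(\tilde u(t,1)))^2$ is monotone increasing in $\tilde u(t,1)$ and the perturbation is a known forcing term, the approximate problem fits the framework of evolution equations governed by time-dependent subdifferentials \cite{kenmochi}, and produces a unique strong solution $\tilde u^\varepsilon\in W^{1,2}(0,T;H)\cap L^\infty(0,T;X)$. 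Testing with $-(\tilde u^\varepsilon)^-$ yields non-negativity (using $\sigma(\tilde u^\varepsilon(t,1))(\tilde u^\varepsilon)^-(t,1)=0$), and testing with $(\tilde u^\varepsilon-M)^+$ for a suitable $M=M(b^*,\gamma,|u_0|_{L^\infty})$ yields an $\varepsilon$-independent $L^\infty$-bound: the sign of the Robin term at $y=0$ becomes favourable above $M$, and at $y=1$ one uses the quadratic structure of $(\sigma(\cdot))^2$ to absorb the lower-order term on the right-hand side. Standard energy and dual-space estimates give uniform bounds of $\tilde u^\varepsilon$ in $V(T)$ and of $\tilde u^\varepsilon_t$ in $L^2(0,T;X^*)$, whereupon Aubin--Lions compactness (together with the trace compactness at $y=1$) lets me pass to $\varepsilon\to 0$; a second, short-time fixed-point argument identifying $\eta$ with $\tilde u(\cdot,1)$ then produces a weak solution of $(\mathrm{AP})(\tilde u_0,s,b)$ in the sense of (S1)--(S2).

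Once $(\mathrm{AP})(\tilde u_0,s,b)$ is solved, combining the uniform $L^\infty$-bound $0\le \tilde u\le M^*$ with (S3) yields $s_t\le a_0 M^* - a_0\alpha s$, whence $s(t)\le \max(s_0,M^*/\alpha)$. This furnishes the explicit maximum free-boundary length promised in the theorem and shows that $\mathcal{S}(T')$ is well-chosen. The main obstacle will be verifying that $\Gamma_{T^*}$ is a strict contraction for some $T^*\le T$ small enough. For two admissible $s_1,s_2$ with corresponding auxiliary solutions $\tilde u_1,\tilde u_2$, I would subtract the weak formulations (S2), test with $\tilde u_1-\tilde u_2$, and control the differences of the boundary term $a_0\sigma(\tilde u_i(\cdot,1))s_{i,t}$ and of the transport term $ys_{i,t}(t)\tilde u_{i,y}/s_i(t)$ using the Lipschitz continuity of $\sigma$, the uniform $L^\infty$- and $W^{1,\infty}$-bounds, the trace inequality $|z(\cdot,1)|\le C|z|_X$, and the bound $s_i\ge s_*>0$. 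A Gronwall argument then gives
\begin{equation*}
|\tilde u_1-\tilde u_2|_{V(T^*)}\le C\sqrt{T^*}\,|s_1-s_2|_{W^{1,\infty}(0,T^*)},
\end{equation*}
and integrating (S3) produces $|\hat s_1-\hat s_2|_{W^{1,\infty}(0,T^*)}\le C'(T^*)|s_1-s_2|_{W^{1,\infty}(0,T^*)}$ with $C'(T^*)\to 0$ as $T^*\to 0$. Banach's fixed point theorem then delivers the unique fixed point $s$ of $\Gamma_{T^*}$, and the same contraction estimate, applied to any two solutions of $(\mathrm{PC})(\tilde u_0,s_0,b)$, yields uniqueness of the full pair $(s,\tilde u)$. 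The non-negativity and boundedness of $\tilde u$ are inherited from the uniform estimates on $\tilde u^\varepsilon$.
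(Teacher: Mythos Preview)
Your overall strategy closely mirrors the paper's: an inner fixed point (in $\eta$, via mollification and subdifferential theory) to solve the auxiliary problem $(\mathrm{AP})(\tilde u_0,s,b)$ for a frozen $s$, followed by an outer Banach fixed point on $s$. The paper carries out the outer iteration in $W^{1,2}(0,T)$ rather than $W^{1,\infty}$, but this is a minor technical choice.

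There is, however, one genuine inaccuracy. You claim that testing with $(\tilde u^\varepsilon-M)^+$ at the level of $(\mathrm{AP2})_\varepsilon$ yields an $L^\infty$-bound with $M=M(b^*,\gamma,|u_0|_{L^\infty})$ independent of $s$ and $\eta$, because ``the quadratic structure of $(\sigma(\cdot))^2$ absorbs the lower-order term''. At that level the boundary flux at $y=1$ is $a_0\big((\sigma(\tilde u^\varepsilon))^2-\alpha\sigma(\eta_\varepsilon)s\big)$, and the sign becomes favourable only when $(\tilde u^\varepsilon(t,1))^2\ge \alpha\sigma(\eta_\varepsilon(t))s(t)$; this forces $M$ to depend on both $\sup|\eta_\varepsilon|$ and $\sup s$, neither of which is controlled a~priori. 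The paper therefore proves the $L^\infty$-bound only \emph{after} the inner fixed point, i.e.\ for $(\mathrm{AP1})$, where it reads $\tilde u\le \max\{\alpha\sup s,\,b^*/\gamma\}$ (Lemma~\ref{lem9}) and thus still depends on $\sup s$. Consequently your subsequent ODE argument $s_t\le a_0M^*-a_0\alpha s\Rightarrow s\le\max(s_0,M^*/\alpha)$ is circular as stated, since $M^*$ itself carries the factor $\sup s$. The paper breaks this circularity differently (Lemma~\ref{lem12}): it tests the \emph{coupled} weak formulation with $z=s(\tilde u-b/\gamma)$, which generates the term $\tfrac{\alpha^2}{6}\tfrac{d}{dt}s^3$ and, after Gronwall, a bound on $s$ depending only on the data $\beta,\gamma,\alpha,b^*,s_0,|b_t|_{L^1},|b_t|_{L^2}$. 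Your circularity can also be resolved by a bootstrap---choose $s^*=\max(s_0,b^*/(\alpha\gamma))$ in $\mathcal S(T')$ and verify $M^*=\alpha s^*$, hence $M^*/\alpha=s^*$---but this must be stated explicitly, and the $L^\infty$-estimate postponed from the $\varepsilon$-level to the $(\mathrm{AP1})$-level.
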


%%%%%%%%%%%%%%%%%%%%%%%%%%%%%%%%%%%%%%%%%%%%%%%%%%%%%%%%%%%%%%%%%%%%%%%%%%%%%%%%%%%%%%%%%
%
%
%
% Section 3 // Auxiliary Problem 
%
%
%
%%%%%%%%%%%%%%%%%%%%%%%%%%%%%%%%%%%%%%%%%%%%%%%%%%%%%%%%%%%%%%%%%%%%%%%%%%%%%%%%%%%%%%%%%%%
\section{Auxiliary Problems}
\label{AP}

In this section, %we prove Theorem \ref{t2} on the existence and uniqueness of a locally-in-time  solution of $(\mbox{PC})(\tilde{u}_0, s, b)$. 
%To do so, 
we consider the following auxiliary problem $(\mbox{AP}1)(\tilde{u}_0, s, b)$: For $T>0$ and $s\in W^{1, 2}(0, T)$ with $s>0$ on $[0, T]$
\begin{align}
& \tilde{u}_t(t, y)-\frac{1}{s^2(t)}\tilde{u}_{yy}(t, y)=\frac{ys_t(t)}{s(t)} \tilde{u}_y(t, y) \mbox{ for }(t, y)\in Q(T), \label{3-1} \tag{3.1} \\
& -\frac{1}{s(t)}\tilde{u}_y(t, 0)=\beta(b(t)-\gamma \tilde{u}(t, 0)) \mbox{ for }t\in(0, T), \label{3-2} \tag{3.2} \\
& -\frac{1}{s(t)}\tilde{u}_y(t, 1)=a_0 \sigma(\tilde{u}(t, 1))(\sigma(\tilde{u}(t, 1))-\alpha s(t))\mbox{ for }t\in (0, T), \tag{3.3} \\
%& -\frac{1}{s(t)}\tilde{u}_y(t, 1)=a_0 (\tilde{u}(t, 1)\sigma(\tilde{u}(t, 1))-\alpha (\rho_{\varepsilon}*\tilde{u})(t, 1)s(t))\mbox{ for }t\in (0, T),  \label{3-3}\\
& \tilde{u}(0, y)=\tilde{u}_0(y) \mbox{ for }y \in [0, 1],  \label{3-4} \tag{3.4}
\end{align}
where $\sigma$ is the same function as in (1.6).

\begin{definition}
\label{def2}
For $T>0$, let $\tilde{u}$ be a function on $Q(T)$, respectively. We call that a function $\tilde{u}$ is a solution of $(\mbox{AP})(\tilde{u}_0, s, b)$ on $[0, T]$ if the conditions (S'1)-(S'3) hold:

(S'1) $\tilde{u}\in W^{1,2}(0, T; X^*)\cap V(T)$.

(S'2) 
\begin{align*}
&\int_0^T \langle \tilde{u}_t(t), z(t) \rangle_X dt + \int_{Q(T)} \frac{1}{s^2(t)} \tilde{u}_{y}(t) z_y(t) dydt \\
& +\int_0^T \frac{a_0}{s(t)}\sigma(\tilde{u}(t, 1))(\sigma(\tilde{u}(t, 1))-\alpha s(t))z(t, 1)dt \\
&-\int_0^T \frac{1}{s(t)} \beta(b(t)-\gamma \tilde{u}(t, 0)) z(t, 0) dt = \int_{Q(T)} \frac{ys_t(t)}{s(t)}\tilde{u}_y(t) z(t) dydt
\mbox{ for }z\in V(T). 
\end{align*}

(S'3) $\tilde{u}(0, y)=\tilde{u}_0(y)$ for $y\in [0, 1]$. 
\end{definition}

%To find a solution $\tilde{u}$ of $(\mbox{AP})(\tilde{u}_0, s, b)$, 
Now, we introduce the following problem $(\mbox{AP}2)(\tilde{u}_0, s, \eta, b)$: For $s\in W^{1,\infty}(0, T)$ with $s>0$ on $[0, T]$ and $\eta \in V(T)$ 
%Now, 
%we introduce the following problem $(\mbox{AP})(\tilde{u}_0, s, \eta, b)$: for given $\eta \in V(T)$, 
\begin{align}
& \tilde{u}_t(t, y)-\frac{1}{s^2(t)}\tilde{u}_{yy}(t, y)=\frac{ys_t(t)}{s(t)} \tilde{u}_y(t, y) \mbox{ for }(t, y)\in Q(T), \tag{3.5} \\
& -\frac{1}{s(t)}\tilde{u}_y(t, 0)=\beta(b(t)-\gamma \tilde{u}(t, 0)) \mbox{ for }t\in(0, T), \tag{3.6} \\
%& -\frac{1}{s(t)}\tilde{u}_y(t, 1)=a_0 \tilde{u}(t, 1)(\sigma(\tilde{u}(t, 1))-\alpha s(t))\mbox{ for }t\in (0, T), 
& -\frac{1}{s(t)}\tilde{u}_y(t, 1)=a_0 ((\sigma(\tilde{u}(t, 1))^2-\alpha \sigma(\eta (t, 1))s(t))\mbox{ for }t\in (0, T),  \tag{3.7} \\
& \tilde{u}(0, y)=\tilde{u}_{0}(y) \mbox{ for }y \in [0, 1], \tag{3.8}
\end{align}

%First, for $s_0>0$, $a>0$ such that $a<s_0$, given  $s\in W^{1,\infty}(0, T)$  with $s(0)=s_0$ and $s\geq a$ on $[0, T]$ and $\eta \in V(T)$ we consider $(\mbox{AP})(\tilde{u}_0, s, \eta, b)$ on $[0, T]$. %construct a solution $\tilde{u}$ of $(\mbox{AP})(\tilde{u}_0, s, \eta, b)$ on $[0, T]$. 
The definition of solutions of $(\mbox{AP}2)(\tilde{u}_0, s, \eta, b)$ is Definition \ref{def2} with   (S'2) replaced by (S''2), which now reads
\\
(S''2): 
\begin{align*}
&\int_0^T \langle \tilde{u}_t(t), z(t) \rangle_X dt + \int_{Q(T)} \frac{1}{s^2(t)} \tilde{u}_{y}(t) z_y(t) dydt \\ 
& + \int_0^T \frac{a_0}{s(t)}((\sigma(\tilde{u}(t, 1))^2-\alpha \sigma(\eta(t, 1))s(t))z(t, 1)dt \\
&-\int_0^T \frac{1}{s(t)} \beta(b(t)-\gamma \tilde{u}(t, 0)) z(t, 0) dt = \int_{Q(T)} \frac{ys_t(t)}{s(t)}\tilde{u}_y(t) z(t) dydt
\mbox{ for }z\in V(T). 
\end{align*}
 
First, we construct a solution $\tilde{u}$ of $(\mbox{AP}2)(\tilde{u}_0, s, \eta, b)$ on $[0, T]$. 
To do so, %construct a solution $\tilde{u}$ of $(\mbox{AP})(\tilde{u}_0, s, \eta, b)$,
 for each $\varepsilon>0$ we solve the following problem $(\mbox{AP}2)_{\varepsilon}(\tilde{u}_{0\varepsilon}, s, \eta, b)$: 
\begin{align*}
& \tilde{u}_t(t, y)-\frac{1}{s^2(t)}\tilde{u}_{yy}(t, y)=\frac{ys_t(t)}{s(t)} \tilde{u}_y(t, y) \mbox{ for }(t, y)\in Q(T), \\
& -\frac{1}{s(t)}\tilde{u}_y(t, 0)=\beta(b(t)-\gamma \tilde{u}(t, 0)) \mbox{ for }t\in(0, T), \\
& -\frac{1}{s(t)}\tilde{u}_y(t, 1)=a_0 ((\sigma(\tilde{u}(t, 1))^2 -\alpha \sigma((\rho_{\varepsilon}*\eta)(t, 1))s(t)) \mbox{ for }t\in (0, T), \\
& \tilde{u}(0, y)=\tilde{u}_{0\varepsilon}(y) \mbox{ for }y \in [0, 1].
\end{align*}
Here $\rho_{\varepsilon}$ is a mollifier with support $[-\varepsilon, \varepsilon]$ in time and $\rho_{\varepsilon}*\eta$ is the convolution of $\rho_{\varepsilon}$ with $\eta$:
\begin{align}
\label{3-5}
(\rho_{\varepsilon}*\eta)(t, 1)=\int_{-\infty}^{\infty} \rho_{\varepsilon}(t-s) \overline{\eta}(s, 1) ds \mbox{ for } t\in [0, T], \tag{3.9}
\end{align}
where $\overline{\eta}(t, 1)=\eta(t, 1)$ for $t\in (0, T)$ and vanishes otherwise. 
%is the extension of $\tilde{u}$ to $\mathbb{R}$ given by 
%\[
%\overline{\tilde{u}}(t, 1):=\begin{cases}
%0  & \mbox{ for }t<0, \\
%\tilde{u}(t, 1) & \mbox{ for } 0\leq t \leq T, \\
%0  & \mbox{ for }t>T. 
%\end{cases}
%\]
Also, $\tilde{u}_{0\varepsilon}$ is an approximation function of $\tilde{u}_0$ such that $\{ \tilde{u}_{0\varepsilon}\} \subset X$, $|\tilde{u}_{0\varepsilon}|_H\leq |\tilde{u}_0|_H+1$ and $\tilde{u}_{0\varepsilon}\to \tilde{u}_0$ in $H$ as $\varepsilon \to 0$.

Now, we define a family $\{ \psi^t \}_{t\in[0, T]}$ of time-dependent functionals $\psi^t: H \to \mathbb{R}\cup \{+\infty \}$
for $t\in [0, T]$ as follows: %2/26
%$\psi^t(u):=$
$$
\psi^t(u):=\begin{cases}
             \displaystyle{\frac{1}{2s^2(t)} \int_0^1|u_y(y)|^2 dy} + \displaystyle{\frac{1}{s(t)}\int_0^{u(1)} a_0 ((\sigma(\xi))^2 d\xi -a_0 \alpha u(1)  \sigma((\rho_{\varepsilon}*\eta)(t, 1))}\\
             -\displaystyle{\frac{1}{s(t)}\int_0^{u(0)}\beta(b(t)-\gamma\xi)d\xi} \mbox{ if } u\in D(\psi^t),\\
             +\infty \quad \rm{otherwise},
             \end{cases}
$$
where $D(\psi^t)=X$ for $t\in [0, T]$. Here, we show the property of $\psi^t$.

%%%%%%%%%%%%%%%%%%%%%%%%%%%%%%%%%%%%%%%%%%%%%%%%%%%%%%%%%%%%%%%%%%%%%%%%%%%%%%%%%%%%%%%%
%
%
% Lemma 3.2// properties of psi^t
%
%
%%%%%%%%%%%%%%%%%%%%%%%%%%%%%%%%%%%%%%%%%%%%%%%%%%%%%%%%%%%%%%%%%%%%%%%%%%%%%%%%%%%%%%%%%%
\begin{lemma}
\label{lem1}
Let $s\in W^{1,2}(0, T)$ with $s >0$ on $[0, T]$, $\eta \in V(T)$ and assume (A1)-(A3). Then the following statements hold:
\begin{itemize}
\item[(1)] There exists positive constants $C_0$ and $C_1$ such that the following inequalities hold:
\begin{align*}
(i) & \ |u(y)|^2 \leq C_0 \psi^t(u)+C_1 \mbox{ for }u\in D(\psi^t) \mbox{ and } y=0, 1,\\
%(ii) & \ |u(1)|^2 \leq C_0 \psi^t(u)+C_1 \mbox{ for }u\in D(\psi^t)\\
(ii) & \ \frac{1}{2s^2(t)}|u_y|^2_H \leq C_0 \psi^t(u)+C_1 \mbox{ for }u\in D(\psi^t), 
%(iii) & \ \psi^t(u) +C_2 \geq 0 \mbox{ for }u\in L^2(0, 1).
%(iii) & \ \frac{1}{s(t)} \int_0^{u(0)} a_0 \xi^2 d\xi \leq C_0 \psi^t(u)+C_1 \mbox{ for }u\in D(\psi^t) 
\end{align*}
\item[(2)] For $t\in [0, T]$, the functional $\psi^t$ is proper, lower semi-continuous, and  convex  on $H$.
\end{itemize}
\end{lemma}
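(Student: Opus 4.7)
The plan is to establish the coercivity bounds in (1) first, then use them to handle lower semi-continuity in (2), while convexity is a term-by-term verification. I begin by observing that the embedding $W^{1,2}(0,T) \hookrightarrow C([0,T])$ together with the hypothesis $s > 0$ on $[0,T]$ yields positive constants $s_*, s^*$ with $s_* \leq s(t) \leq s^*$, so $\frac{1}{s(t)}$ and $\frac{1}{s^2(t)}$ are bounded above and below by positive constants. Expanding the two definite integrals in the definition gives
\[
\psi^t(u) = \frac{1}{2s^2(t)}|u_y|_H^2 + \frac{a_0}{3s(t)}(\sigma(u(1)))^3 - a_0\alpha\,u(1)\sigma((\rho_\varepsilon*\eta)(t,1)) + \frac{\beta\gamma}{2s(t)} u(0)^2 - \frac{\beta b(t)}{s(t)} u(0).
\]
Young's inequality for convolutions bounds $\rho_\varepsilon*\overline{\eta}(\cdot,1)$ uniformly on $[0,T]$ by a constant depending on $|\eta|_{V(T)}$ and $\varepsilon$, which allows me to treat the $u(1)$-linear coefficient as essentially a constant in what follows.

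For (1) the plan is to absorb the indefinite linear terms into the positive definite quadratic ones by Young's inequality. The $u(0)$ contribution is direct: $\frac{\beta b(t)}{s(t)}|u(0)| \leq \frac{\beta\gamma}{4s(t)} u(0)^2 + \frac{\beta b(t)^2}{\gamma s(t)}$, absorbing half of the coefficient of $u(0)^2$. The $u(1)$-linear term is more delicate, because the cubic $(\sigma(u(1)))^3$ does not control $u(1)^2$ on $\{u(1)<0\}$; my remedy is to use the elementary identity $u(1) = u(0) + \int_0^1 u_y\,dy$ and Cauchy--Schwarz to get $|u(1)|^2 \leq 2 u(0)^2 + 2 |u_y|_H^2$, and then to apply Young with a small parameter $\delta$ to bound $a_0\alpha|u(1)\sigma((\rho_\varepsilon*\eta)(t,1))|$ by $\delta(u(0)^2 + |u_y|_H^2)$ plus a $t$-independent data constant. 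Choosing $\delta$ small enough to leave positive remainders in front of $u(0)^2$ and $|u_y|_H^2$ yields
\[
\psi^t(u) \geq c_1 |u_y|_H^2 + c_2\, u(0)^2 - C
\]
for some positive $c_1, c_2$ and a constant $C$ depending on the data. This delivers (ii) and (i) at $y = 0$ at once, and (i) at $y = 1$ then follows by substituting this into the trace-like inequality $|u(1)|^2 \leq 2 u(0)^2 + 2 |u_y|_H^2$.

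For (2) properness is immediate from $\psi^t(0) = 0$. Convexity is a summand-by-summand check: the quadratic form in $u_y$ is convex; the scalar map $r \mapsto \int_0^r(\sigma(\xi))^2\,d\xi$ has the non-decreasing derivative $(\sigma(r))^2$ and is therefore convex, so the composition with the linear trace $u \mapsto u(1)$ stays convex; the term $-a_0\alpha\,u(1)\sigma((\rho_\varepsilon*\eta)(t,1))$ is affine in $u$; and the $u(0)$ piece expands to $-\beta b(t) u(0) + \frac{\beta\gamma}{2} u(0)^2$, a convex quadratic. I expect lower semi-continuity on $H$ to be the main obstacle, because the endpoint traces $u \mapsto u(0), u(1)$ are not continuous on $H$. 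My plan is to leverage the coercivity already obtained in (1): for any sequence $u_n \to u$ in $H$ with $L := \liminf_n \psi^t(u_n)$ finite, I pass to a subsequence attaining the liminf; the bound (1)(ii) then forces $\{u_n\}$ to be bounded in $X$, so a further subsequence converges weakly to $u$ in $X$ and, via the compact embedding $X \hookrightarrow C([0,1])$, uniformly on $[0,1]$, whence $u_n(0) \to u(0)$ and $u_n(1) \to u(1)$. The boundary terms then pass to the limit by continuity of $\sigma$ and of $r\mapsto (\sigma(r))^3$, the gradient term obeys $|u_y|_H^2 \leq \liminf_n |(u_n)_y|_H^2$ by weak lower semi-continuity of the norm, and adding these inequalities yields $\psi^t(u) \leq L$, as required.
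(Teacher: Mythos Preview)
Your argument is correct. For part~(2) it coincides with the paper's: convexity checked term by term (the paper does this via second derivatives, you via monotonicity of derivatives---same content), and lower semi-continuity via coercivity plus the compact embedding $X\hookrightarrow C([0,1])$, which is exactly what the paper means when it says the level sets are closed.

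For part~(1) you take a genuinely different route on the $u(1)$-linear term. The paper splits into the cases $u(1)<0$ and $u(1)\ge 0$; in the latter it absorbs $a_0\alpha\,u(1)\sigma(\eta_\varepsilon(t))$ into the cubic $\frac{a_0}{3s(t)}u(1)^3$ via Young's inequality with exponents $(3,3/2)$, obtaining a lower bound $-\tfrac{2}{3\delta_0^{3/2}}(a_0\alpha\eta_\varepsilon^*)^{3/2}$ and only afterwards invoking $|u(1)|^2\le 2(|u_y|_H^2+|u(0)|^2)$ for the final statement. You instead discard the cubic (it is nonnegative) and route the linear $u(1)$-term directly through the trace inequality $|u(1)|^2\le 2u(0)^2+2|u_y|_H^2$, absorbing it into the quadratic $|u_y|_H^2$ and $u(0)^2$ pieces by Young with a small $\delta$. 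Your approach avoids the case distinction entirely and uses only quadratic Young, at the cost of not exploiting the cubic at all; the paper's approach extracts more structure from the functional but needs the sign split. Either yields the required coercivity with constants uniform in $t$ (and depending on $\varepsilon$ through $\eta_\varepsilon^*$, which is all the lemma claims).
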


\begin{proof}
%First, we note that for $t\in [0, T]$ if $u\in D(\psi^t)$, then $u(0)$ and $u(1)$ are non-negative.
We fix $\varepsilon>0$ and let $t\in [0, T]$, $u\in D(\psi^t)$ and put $l=\displaystyle{\max_{0\leq t \leq T} |s(t)|}$ and $\eta_{\varepsilon}(t)=(\rho_{\varepsilon}*\eta)(t, 1)$ for $t\in [0, T]$. If $u(1)<0$, $\frac{1}{s(t)} \int_0^{u(1)} a_0 (\sigma(\xi))^2 d\xi =0$. If $u(1)\geq 0$, 
then it holds 
\begin{align*}
& \frac{1}{s(t)} \int_0^{u(1)} a_0 (\sigma(\xi))^2 d\xi -a_0 \alpha u(1) \sigma(\eta_{\varepsilon}(t))= \frac{a_0}{s(t)} \frac{1}{3}u^3(1)-a_0 \alpha u(1) \sigma(\eta_{\varepsilon}(t))\\
& \geq  \frac{a_0}{3s(t)} u^3(1) - \frac{1}{3} \delta^{3} u^3(1) - \frac{2}{3\delta^{3/2}} (a_0 \alpha \sigma(\eta_{\varepsilon}(t)))^{\frac{3}{2}}, 
\end{align*}
where $\delta$ is an arbitrary positive number. By using the fact that $\sigma(r)\leq |r|$ for $r\in \mathbb{R}$ and taking a suitable $\delta=\delta_0$ we have 
\begin{align}
\frac{1}{s(t)} \int_0^{u(1)} a_0 (\sigma(\xi))^2 d\xi -a_0 \alpha u(1) \eta_{\varepsilon}(t) \geq -\frac{2}{3\delta^{3/2}_0} (a_0 \alpha \eta^*_{\varepsilon})^{\frac{3}{2}}, \tag{3.10}
\label{3-5-1}
\end{align}
where $\eta^*_{\varepsilon}=\displaystyle{\max_{0\leq t \leq T}|\eta_{\varepsilon}(t)|}$. Moreover, for both cases $u(0)<0$ and $u(0)\geq 0$,  
we observe that 
\begin{align}
 - \frac{1}{s(t)} \int_0^{u(0)} \beta (b(t)-\gamma \xi) d\xi 
%=&\int_{0}^{\varphi(a)} a_0\sigma(\xi)(\sigma(\xi)-\varphi(s(t))) d\xi +\int_{\varphi(a)}^{u(1)} a_0\sigma(\xi)(\sigma(\xi)-\varphi(s(t))) d\xi \\
%=& \int_{0}^{} a_0\varphi(a)(\varphi(a)-\varphi(s(t))) d\xi +\int_{\varphi(a)}^{u(1)} a_0\xi(\xi-\varphi(s(t))) d\xi \\
& =\frac{\beta}{s(t)} \left [ \frac{\gamma}{2} u^2(0) -b(t) u(0)\right] \nonumber \\
\geq \frac{\beta \gamma}{2l} u^2(0)-\frac{\beta b^*}{a} u(0) & \geq \frac{\beta \gamma}{4l} u^2(0)-\frac{\beta l}{\gamma} \left(\frac{b^*}{a} \right)^2. \tag{3.11}
%& +a_0\frac{u^3(1)}{3}-a_0 \varphi(s(t))\frac{u^2(1)}{2}-\biggl(a_0\frac{\varphi^3(a)}{3}-a_0 \varphi(s(t))\frac{\varphi^2(a)}{2}\biggr)\\
%=& a_0\frac{u^3(1)}{3}-a_0 \varphi(s(t))\frac{u^2(1)}{2}+\biggl(a_0\frac{2\varphi^3(a)}{3}-a_0 \varphi(s(t))\frac{\varphi^2(a)}{2}\biggr)\\
%\geq &\frac{a_0}{3} u^3(1) (1-2\eta^{3/2})-\frac{a_0}{3}\left(\frac{c_{\varphi}}{2\eta}\right)^3
%+\biggl(a_0\frac{2\varphi^3(a)}{3}-a_0 \frac{\varphi^2(a)c_{\varphi}}{2}\biggr),
\label{3-5-2}
\end{align}
%Since the second term of the right-hand side of $\psi^t$ is positive, 
Accordingly, if $u(1)<0$, then we have that 
\begin{align}
\label{3-5-3}
\psi^t(u) &\geq \frac{1}{2s^2(t)} \int_0^1|u_y(y)|^2 dy + \frac{\beta \gamma}{4l} u^2(0)-\frac{\beta l}{\gamma} \left(\frac{b^*}{a}\right)^2. 
\tag{3.12}
\end{align}
If $u(1)\geq 0$, then, by (\ref{3-5-1}) and (\ref{3-5-2}) 
we also have that
\begin{align}
\psi^t(u) &\geq \frac{1}{2s^2(t)} \int_0^1|u_y(y)|^2 dy -\frac{2}{3\delta^{3/2}_0} (a_0 \alpha \eta^*_{\varepsilon})^{\frac{3}{2}} + \frac{\beta \gamma}{4l} u^2(0)-\frac{\beta l}{\gamma} \left(\frac{b^*}{a}\right)^2. \tag{3.13}
%\psi^t(u) &\geq \frac{k}{2(s(t)-a)^2} \int_0^1|u_y|^2 dy \\
%& + \frac{c_0\varphi(a)}{2} u^2(1)-c_1 + \frac{c_{\beta}H}{4(L-a)}u^2(0)-\frac{c_{\beta}|h|^2_{L^{\infty}(0, T)}}{H\delta_s}.
\label{3-6-1}
\end{align}
Moreover, it holds that
\begin{align}
|u(1)|^2 & =\biggl | \int_0^1 u_y(y) dy + u(0) \biggr|^2 \leq 2\left (\int_0^1 |u_y(y)|^2 dy + |u(0)|^2 \right) \nonumber \\
& \leq 2\left (\frac{2l^2}{2s^2(t) }\int_0^1 |u_y(y)|^2 dy + |u(0)|^2 \right). \tag{3.14}
\label{3-6-2}
\end{align}
%Moreover, 
%\begin{align}
 %|\psi^t(u)| & \leq \frac{1}{2s^2(t)} \int_0^1|u_y(y)|^2 dy + \frac{1}{s(t)}\int_0^{u(1)} |a_0 \xi (\sigma(\xi) -\alpha s(t))| d\xi\nonumber \\
%& + \frac{1}{s(t)}\int_0^{u(0)}|\beta(b(t)-\gamma\xi)|d\xi
%\label{3-6-2}
%\end{align}
Therefore, by (\ref{3-5-3})-(\ref{3-6-2}) we see that (i) and (ii) of Lemma \ref{lem1} hold.

%Next, we prove (iii). By the definition of $\psi^t$ and $\sigma$ and (\ref{3-5-2}), it is easy to see that 
%\begin{align*}
%\psi^t(u) & \geq -a_0 \alpha u(1) \eta_{\varepsilon}(t)-\frac{1}{s(t)} \int_0^{u(0)} \beta(b(t)-\gamma \xi) d\xi \nonumber \\
%& \geq -a_0 \alpha \left (\frac{\eta^*_{\varepsilon}}{2}+ \frac{u^2(1)}{2}\right) -\frac{\beta l}{\gamma} \left(\frac{b^*}{a} \right)^2.
%\end{align*}
%Therefore, by using (i) of Lemma \ref{lem1} we have 
%\begin{align*}
%\psi^t(u) \geq -\frac{a_0\alpha }{2}(C_0 \psi^t(u) +C_1)-a_0 \alpha \frac{\eta^*_{\varepsilon}}{2}-\frac{\beta l}{\gamma} \left(\frac{b^*}{a} %\right)^2.
%\end{align*}
%Thus, we see that a positive constant $C_2$ such that (iii) holds. 

Next, we prove statement (2).  For $t\in [0, T]$ and $r\in \mathbb{R}$, put 
\begin{align*}
& g_1(s(t), \eta_{\varepsilon}(t), r) =\frac{1}{s(t)}\int_0^r a_0(\sigma(\xi))^2 d\xi - a_0 \alpha r \sigma(\eta_{\varepsilon}(t)), \\
& g_2(s(t), b(t), r)=-\frac{1}{s(t)}\int_0^r \beta(b(t)-\gamma \xi)d\xi.
\end{align*}
Then, by $g_1(s(t), \eta_{\varepsilon}(t), r) = -a_0 \alpha r \sigma(\eta_{\varepsilon}(t))$ for $r\leq 0$, $g_1(s(t), \eta_{\varepsilon}(t), r) $ is a linear decreasing function for $r\leq 0$. Also, by $s(t)>0$ we see that 
\begin{align*}
& \frac{\partial^2}{\partial r^2} g_1(s(t), \eta_{\varepsilon}(t), r) =\frac{2a_0}{s(t)}r>0 \mbox{ for }r>0, \\
& \frac{\partial^2}{\partial r^2}g_2(s(t), b(t), r)= \frac{\beta \gamma}{s(t)}>0 \mbox{ for } r\in \mathbb{R}.
\end{align*}
%Therefore, $r \mapsto a_0r^2$ and $r\mapsto -\beta(b(t)-Hr)$ are convex 
This means that $\psi ^t$ is convex on $H$. By using (i) and (ii) of Lemma \ref{lem1} together with Sobolev's embedding $X \hookrightarrow C([0, 1])$ in one dimensional case, it is easy to prove that the level set of $\psi^t$ is closed in $H$, fact which ensures to the lower semi-continuity of $\psi^t$. Thus, we see that statement (2) holds.
\end{proof}

Lemma \ref{lem1} guarantess the existence of a solution to $(\mbox{AP}2)_{\varepsilon}(\tilde{u}_{0\varepsilon}, s, \eta, b)$.
%%%%%%%%%%%%%%%%%%%%%%%%%%%%%%%%%%%%%%%%%%%%%%%%%%%%%%%%%%%%%%%%%%%%%%%%%%%%%%%%%%%%%%%%%%
%
%
% Lemma 3.3 // 
%
%
%%%%%%%%%%%%%%%%%%%%%%%%%%%%%%%%%%%%%%%%%%%%%%%%%%%%%%%%%%%%%%%%%%%%%%%%%%%%%%%%%%%%%%%%%%%
\begin{lemma}
\label{lem2}
Let $T>0$ and $\varepsilon>0$. If (A1)-(A3) hold, then, for given $s\in W^{1,\infty}(0, T)$ with $s>0$ on $[0, T]$ and $\eta \in V(T)$, 
%$f\in W^{1,2}(Q(T))\cap L^{\infty}(0, T;H^1(0, 1))$, 
the problem $(\mbox{AP}2)_{\varepsilon}(\tilde{u}_{0\varepsilon}, s, \eta, b)$ admits a unique solution $\tilde{u}$ on $[0, T]$ such that $\tilde{u}\in W^{1,2}(0, T;H)\cap L^{\infty}(0, T;X)$. Moreover, the function $t\to \psi^t(\tilde{u}(t))$ is absolutely continuous on $[0, T]$.
\end{lemma}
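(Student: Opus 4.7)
The strategy is to recast $(\mbox{AP}2)_{\varepsilon}(\tilde{u}_{0\varepsilon}, s, \eta, b)$ as an abstract Cauchy problem in $H$ of the form
\begin{equation*}
\tilde{u}_t(t) + \partial \psi^t(\tilde{u}(t)) \ni F(t, \tilde{u}(t)) \quad \text{in } H, \qquad \tilde{u}(0) = \tilde{u}_{0\varepsilon},
\end{equation*}
where $\psi^t$ is the time-dependent convex functional defined before Lemma \ref{lem1} and $F(t,u) := \frac{y s_t(t)}{s(t)} u_y$ is the convective perturbation. A formal computation (integration by parts against a test function $z \in X$) shows that the Gateaux derivative of $\psi^t$ encodes exactly the diffusion term $-\frac{1}{s^2(t)} u_{yy}$ together with both Robin/Neumann-type boundary conditions appearing in (3.5)--(3.8); so a strong solution of the above abstract equation yields a function satisfying $(\text{S}''2)$ in the sense of Definition \ref{def2}. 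By Lemma \ref{lem1}, each $\psi^t$ is proper, lower semi-continuous, and convex on $H$, with the standard coercivity estimates (i) and (ii).

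To apply the Kenmochi theory of evolution equations governed by time-dependent subdifferentials (see \cite{kenmochi}), I plan to verify the following three conditions. \emph{First}, the time-dependence hypothesis: for any $t, \tau \in [0,T]$ and any $u \in D(\psi^\tau) = X$, taking $v = u \in D(\psi^t)$, one must control $|\psi^t(u) - \psi^\tau(u)|$ by a modulus $|\mu(t)-\mu(\tau)|$ times a function of $\psi^\tau(u)$, where $\mu$ is absolutely continuous. This reduces to estimating the time-variations of $1/s^2(t)$, $1/s(t)$, $b(t)$, and $\sigma(\eta_{\varepsilon}(t,1))$. Since $s \in W^{1,\infty}(0,T)$ with $s>0$, $b \in W^{1,2}(0,T)$, and crucially $t \mapsto \sigma((\rho_\varepsilon * \eta)(t,1))$ is Lipschitz in time thanks to the time-mollification (this is the whole purpose of the regularization at level $\varepsilon$), each of these coefficients has the required regularity; the resulting bound is of the form $|\psi^t(u) - \psi^\tau(u)| \leq C_\varepsilon \int_\tau^t \ell(r) dr \,(1 + \psi^\tau(u))$ for some $\ell \in L^2(0,T)$. \emph{Second}, the perturbation $F(t,u) = \frac{y s_t(t)}{s(t)} u_y$ satisfies $|F(t,u)|_H \leq \frac{|s_t(t)|}{s(t)} |u_y|_H$, and by Lemma \ref{lem1}(ii), $|u_y|_H \leq C(\psi^t(u) + 1)^{1/2}$, so $F$ is dominated in the manner required by the perturbation theorem. \emph{Third}, $\tilde{u}_{0\varepsilon} \in X = D(\psi^0)$, so the initial condition is admissible.

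With these verifications in place, Kenmochi's abstract theorem furnishes a unique strong solution $\tilde{u} \in W^{1,2}(0,T; H) \cap L^\infty(0,T; X)$ with $t \mapsto \psi^t(\tilde{u}(t))$ absolutely continuous on $[0,T]$; standard chain-rule arguments then show that the equation and boundary conditions are recovered in the weak form of $(\text{S}''2)$. Uniqueness can be obtained independently: given two solutions $\tilde{u}^{(1)}, \tilde{u}^{(2)}$, testing the difference of the equations against $\tilde{u}^{(1)} - \tilde{u}^{(2)}$ yields (using monotonicity of $\xi \mapsto (\sigma(\xi))^2$, monotonicity of $\xi \mapsto \gamma \xi$, and boundedness of the convective coefficient) a differential inequality of Gronwall type. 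The main obstacle I anticipate is the verification of the time-dependence hypothesis on $\psi^t$, because the boundary contributions couple the time variable through several factors ($s$, $b$, $\eta_\varepsilon$) and through the trace values $u(0), u(1)$; the nonquadratic term $\tfrac{a_0}{3s(t)} u^3(1)$ must be carefully bounded using the trace inequality (3.14), so that the $s$-variation of $1/s(t)$ is absorbed by $\psi^\tau(u)$ itself rather than by an uncontrolled constant.
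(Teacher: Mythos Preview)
Your overall plan is sound, but it differs from the paper's argument in one structural respect. The paper does \emph{not} apply a perturbation theorem for subdifferential evolution equations directly to the term $F(t,u)=\frac{y s_t(t)}{s(t)}u_y$. Instead, it freezes the convective term: for a given $f\in V(T)$ it considers the problem $(\mbox{AP}3)_\varepsilon$ with right-hand side $\frac{y s_t}{s}f_y\in L^2(0,T;H)$, applies Kenmochi's theory \cite{kenmochi} in the unperturbed form $\tilde u_t+\partial\psi^t(\tilde u)=g(t)$ with $g\in L^2(0,T;H)$, and then closes the loop by showing that the solution map $f\mapsto \tilde u$ is a contraction on $V(T_1)$ for some $T_1\le T$. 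Since $T_1$ is independent of the initial datum, the solution is extended to $[0,T]$ by iteration.

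Your route is more direct and avoids the fixed-point step, but it leans on a nonmonotone-perturbation existence theorem (a perturbation $F$ defined only on $D(\psi^t)$ with growth $|F(t,u)|_H\le k(t)(1+\psi^t(u))^{1/2}$) which is not contained in the Kenmochi reference \cite{kenmochi} that you invoke; you would need to cite an additional source (e.g.\ \^Otani or Yamada) or supply the argument yourself. The paper's freeze-and-contract approach is more elementary in the sense that it uses only the unperturbed Kenmochi result plus a Gronwall/contraction estimate, at the cost of the extra auxiliary problem and the two-step local-to-global extension. Both routes require the same verification of the time-dependence hypothesis on $\psi^t$ (the paper records it as inequality (3.15)), and your identification of the mollification $\rho_\varepsilon*\eta$ as the device that makes $t\mapsto\sigma(\eta_\varepsilon(t,1))$ absolutely continuous is exactly right.
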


\begin{proof}
Let  $\varepsilon>0$ arbitrarily fixed. By Lemma \ref{lem1}, for $t\in [0, T]$ $\psi^t$ is a proper lower semi-continuous convex function on $H$.
From the definition of the subdifferential of $\psi^t$, for $t\in [0, T]$, $z^*\in \partial \psi^t(u)$ is characterized by
$u$, $z^*\in H$, %With this information at hand, we see that $z^*=\partial \psi^t(u)$ if and only if $z^*\in L^2(0, 1)$ and
\begin{align*}
& z^*=-\frac{1}{s^2(t)} u_{yy} \mbox{ on } (0, 1), \\
& -\frac{1}{s(t)}u_y(0)=\beta(b(t)-\gamma u(0)), \ -\frac{1}{s(t)}u_y(1)=a_0 ((\sigma(u(1))^2-\alpha \sigma((\rho_{\varepsilon}*\eta)(t, 1))s(t)).
\end{align*}
Namely, $\partial \psi^t$ is single-valued. Also, we see that there exists a positive constant $C_3$ such that for each $t_1$, $t_2\in [0, T]$ with $t_1\leq t_2$, and for any $u\in D(\psi^{t_1})$, there exists $\bar{u}\in D(\psi^{t_2})$ satisfying the inequality
\begin{align}
\label{3-8}
%&|\bar{u}-u|_{L^2(0, 1)} \leq \alpha |s(t_1)-s(t_2)|(1+|\varphi^{t_1}(u)|^{1/2}), \label{3-7}\\
&|\psi^{t_2}(\bar{u})-\psi^{t_1}(u)| \nonumber \\
\leq & C_3(|s(t_1)-s(t_2)|+|b(t_1)-b(t_2)|+|(\rho_{\varepsilon}*\eta)(t_1, 1)-(\rho_{\varepsilon}*\eta)(t_2, 1)|)(1+|\psi^{t_1}(u)|). 
\tag{3.15}
\end{align}
Indeed, by taking $\bar{u}:=u$ it easy to see that $\bar{u}\in D(\psi^{t_2})$, and using Lemma \ref{lem1}, we can find $C_3>0$ such that (\ref{3-8}) holds. 
%Also, by the definition of $\psi^t$, it holds
%\begin{align*}
%&\psi^{t_2}(\bar{u})-\psi^{t_1}(u) \\
%=&\left(\frac{1}{s(t_2)}-\frac{1}{s(t_1)}\right) \int_0^1 |u_y(y)|^2 dy + \left (\frac{1}{s(t_2)}-\frac{1}{s(t_1)} \right) \int_0^{u(0)} a_0\xi^2 d\xi \\
%& - \left(\frac{1}{s(t_2)}-\frac{1}{s(t_1)}\right) \int_0^{u(1)} \beta(b(t)-H\xi) d\xi -\frac{\beta}{s(t_1)} (b(t_2)-b(t_1))u(0) 
%\end{align*}
%Here, we note that for $t\in [0, T]$, 
%\begin{align*}
%\frac{1}{s(t)}\int_0^{u(1)} a_0 \xi^2 d\xi & = \psi^t(u)- \frac{1}{2s^2(t)} \int_0^1|u_y(y)|^2 dy + \frac{1}{s(t)}\int_0^{u(0)}%\beta(b(t)-H\xi)d\xi \\
%& \leq \psi^t(u) + \frac{\beta b^*}{s(t)}u(0). 
%\leq & \psi^t(u) + \frac{\beta b^*}{2a}(1+u^2(0)).  
%\end{align*}
%Then, by (i) of Lemma \ref{lem1} we see that there exists positive constants $C_2$ and $C_3$ such that 
%\begin{align}
%\label{3-8-1}
% \frac{1}{s(t)}\int_0^{u(1)} a_0 \xi^2 d\xi \leq C_2\psi^t(u) + C_3 \mbox{ for } u\in D(\psi^t). 
%\end{align}
%By the statement (1) of Lemma \ref{lem1} and (\ref{3-8-1}) we infer that (\ref{3-8}) holds. 

Now, for given $f\in L^2(0, T; X)$ we consider the following problem which is denoted by $(\mbox{AP}3)_{\varepsilon}(\tilde{u}_{0\varepsilon}, s, f, \eta, b)$:
\begin{align*}
& \tilde{u}_t(t, y)-\frac{1}{s^2(t)}\tilde{u}_{yy}(t, y)=\frac{ys_t(t)}{s(t)} f_y(t, y) \mbox{ for }(t, y)\in Q(T), \\
& -\frac{1}{s(t)}\tilde{u}_y(t, 0)=\beta(b(t)-\gamma \tilde{u}(t, 0)) \mbox{ for }t\in(0, T), \\
& -\frac{1}{s(t)}\tilde{u}_y(t, 1)=a_0 ((\sigma(\tilde{u}(t, 1))^2 -\alpha \sigma((\rho_{\varepsilon}*\eta)(t, 1))s(t)) \mbox{ for }t\in (0, T), \\
& \tilde{u}(0, y)=\tilde{u}_{0\varepsilon}(y) \mbox{ for }y \in [0, 1].
\end{align*}
Then,  $(\mbox{AP}3)_{\varepsilon}(\tilde{u}_{0\varepsilon}, s, f, \eta, b)$ can be written into the following Cauchy problem (CP)$_{\varepsilon}$: 
\begin{align*}
& \tilde{u}_t+\partial \psi^t(\tilde{u}(t)) =\frac{y s_t(t)}{s(t)}f_y(t) \mbox{ in }H, \\
& \tilde{u}(0, y)=\tilde{u}_{0\varepsilon}(y) \mbox{ for }y\in [0, 1].
\end{align*}
%Here $f\in L^2(0, T;H^1(0, 1))$ and $s \in W^{1,2}(0, T)$ so that 
Since $\frac{ys_t }{s}f_y \in L^2(0, T; H)$, by the general theory of evolution equations governed by time dependent subdifferentials(cf. \cite{kenmochi}) we see that 
(CP)$_{\varepsilon}$ has a solution $\tilde{u}$ on $[0, T]$ such that $\tilde{u}\in W^{1,2}(Q(T))$, $\psi^t(\tilde{u}(t)) \in L^{\infty}(0, T)$ and $t \to \psi^t(\tilde{u}(t))$ is absolutely continuous on $[0, T]$. This implies that $\tilde{u}$ is a unique solution of $(\mbox{AP}3)_{\varepsilon}(\tilde{u}_{0\varepsilon}, s, f, \eta, b)$ on $[0, T]$.
%\end{proof}

Next, for given $f\in V(T)$ we define a operator $\Gamma_T(f)=\tilde{u}$, where
$\tilde{u}$ is a unique solution of $(\mbox{AP}3)_{\varepsilon}(\tilde{u}_{0\varepsilon}, s, f, \eta, b)$.
Now, for $i=1,2$ we put $\Gamma_T(f_i)=\tilde{u}_i$ and $f=f_1-f_2$ and $\tilde{u}=\tilde{u}_1-\tilde{u}_2$.
Then, it holds that 
\begin{align}
& \frac{1}{2}\frac{d}{dt}|\tilde{u}(t)|^2_{H} - \int_0^1 \frac{1}{s^2(t)}\tilde{u}_{yy}(t) \tilde{u}(t) dy = \int_0^1 \frac{y s_t(t)}{s(t)}f_{y}(t)\tilde{u}(t) dy.
\label{3-9} \tag{3.16}
\end{align}
Using the boundary condition, the second term of the left-hand side of (\ref{3-9}) can be as follows:
\begin{align}
& - \int_0^1 \frac{1}{s^2(t)}\tilde{u}_{yy}(t) \tilde{u}(t) dy \nonumber \\
& = \frac{1}{s^2(t)}\left(-\tilde{u}_y(t, 1)\tilde{u}(t, 1)+\tilde{u}_y(t, 0)\tilde{u}(t, 0) + \int_0^1|\tilde{u}_y(t)|^2 dy\right) \nonumber \\
& =\frac{a_0}{s(t)}\biggl((\sigma(\tilde{u}_1(t, 1))^2-\alpha \sigma(\eta_{\varepsilon}(t)) s(t))-((\sigma(\tilde{u}_2(t, 1))^2-\alpha \sigma(\eta_{\varepsilon}(t)) s(t)) \biggr)\tilde{u}(t, 1) \nonumber \\
& -\frac{1}{s(t)}\biggl(\beta(b(t)-\gamma \tilde{u}_1(t, 0))-\beta(b(t)-\gamma \tilde{u}_2(t, 0))\biggr)\tilde{u}(t, 0) + \frac{1}{s^2(t)}\int_0^1|\tilde{u}_y(t)|^2 dy, 
\label{3-10} \tag{3.17}
%& \eq  - \frac{\beta H}{s(t)}|\tilde{u}(t, 0)|^2 + \frac{k}{s^2(t)}\int_0^1|\tilde{u}_y(t)|^2 dy.
\end{align}
where $\eta_{\varepsilon}(t)=(\rho_{\varepsilon}*\eta)(t, 1)$ for $t\in [0, T]$. 
%For the first term of the right-hand side of (\ref{3-10}), we separate as follows:
%\begin{align*}
%& \frac{a_0}{s(t)}\biggl(\tilde{u}_1(t, 1)\sigma(\tilde{u}_1(t, 1))-\tilde{u}_2(t, 1)\sigma(\tilde{u}_2(t, 1))\biggr)\tilde{u}(t, 1) \\
%= & \frac{a_0}{s(t)} \sigma(\tilde{u}_1(t, 1)) \tilde{u}^2(t, 1) 
%+ \frac{a_0}{s(t)} \tilde{u}_2(t, 1)(\sigma(\tilde{u}_1(t, 1)) -\sigma(\tilde{u}_2(t, 1)))\tilde{u}(t, 1) 
%\end{align*}
The first term of the right-hand side of (\ref{3-10}) is non-negative due to the monotonicity and non-negativity of $\sigma$.
%\begin{align}
%\frac{a_0}{s(t)}(\tilde{u}_1(t, 1)(\sigma(\tilde{u}_1(t, 1))-\tilde{u}_2(t, 1)\sigma(\tilde{u}_2(t, 1)))\tilde{u}(t, 1) \geq -\frac{a_0}{s(t)}|\tilde{u}_2(t, 1)||\tilde{u}(t, 1)|^2.
%\label{3-11} 
%\end{align}
Since the second term from the right-hand side of (\ref{3-10}) is non-negative,  we have 
%\begin{align}
%& - \int_0^1 \frac{1}{s^2(t)}\tilde{u}_{yy}(t) \tilde{u}(t) dy \\
%\geq \frac{1}{s^2(t)}\int_0^1|\tilde{u}_y(t)|^2 dy -\frac{a_0}{s(t)}|\tilde{u}_2(t, 1)||\tilde{u}(t, 1)|^2 
%\label{3-11}
%\end{align}
%Accordingly, by (\ref{3-9})-(\ref{3-11}), we have that 
\begin{align}
\label{3-12}
\frac{1}{2}\frac{d}{dt}|\tilde{u}(t)|^2_{H}  + \frac{1}{s^2(t)}\int_0^1|\tilde{u}_y(t)|^2 dy \leq \int_0^1 \frac{ys_t(t)}{s(t)}f_{y}(t) \tilde{u}(t)dy. 
\tag{3.18}
\end{align}
Using H\"older's inequality, it holds that 
\begin{align}
\label{3-13} 
\int_0^1 \frac{y s_t(t)}{s(t)}f_{y}(t) \tilde{u}(t)dy \leq \frac{|s_t|_{L^{\infty}(0, T)}}{a}|\tilde{u}(t)|_{H} |f_y(t)|_{H}. 
\tag{3.19}
\end{align}
%Here, by Sobolev's embedding theorem in one dimension, we note that it holds that 
%\begin{align}
%\label{1d}
%|z(t, y)|^2 \leq C_e |z(t)|_{X} |z(t)|_{H} \mbox{ for } z\in X \mbox{ and } y\in [0, 1], 
%\end{align}
%where $C_e$ is a positive constant defined from Sobolev's embedding theorem. 
%Also, by $\psi^t(\tilde{u}_i(t)) \in L^{\infty}(0, T)$ for $i=1$,2,  Lemma \ref{lem1} and (\ref{1d}) we note that $\tilde{u}_i(\cdot, 1)\in L^{\infty}(0, T)$ for $i=1$, 2. Then, by (\ref{1d}) we have 
%\begin{align}
%\label{3-14}
%\frac{a_0}{s(t)}|\tilde{u}_2(t, 1)||\tilde{u}(t, 1)|^2 
%\leq \frac{a_0}{a}C_e|\tilde{u}_2(\cdot, 1)|_{L^{\infty}(0, T)}(|\tilde{u}_y(t)|_{H} |\tilde{u}(t)|_{H} + |\tilde{u}(t)|^2_H)
%\leq & \frac{1}{2s^2(t)} |\tilde{u}_y(t)|^2_{H} + ((a_0C_e|\tilde{u}_2(t, 1)|)^2 + a_0 C_e|\tilde{u}_2(t, 1)|)|\tilde{u}(t)|^2_H)
%\end{align}

Let $T_1\in (0, T]$. Then, by putting $l=\displaystyle{\mbox{max}_{0\leq t\leq T}|s(t)|}$, integrating (\ref{3-12}) with (\ref{3-13}) over $[0, t]$ for any $t\in [0, T_1]$, we obtain that 
\begin{align}
\label{3-14-1} 
& \frac{1}{2}|\tilde{u}(t)|^2_{H}  + \frac{1}{l^2} \int_0^{t} \int_0^1|\tilde{u}_y(\tau)|^2 dy d\tau \nonumber \\
\leq & \frac{|s_t|_{L^{\infty}(0, T)}}{a} |\tilde{u}|_{L^{\infty}(0, T_1;H)} T^{1/2}_1 \left( \int_0^{T_1} |f_y(\tau)|^2_{H} d\tau \right)^{1/2}\nonumber \\
%& + \frac{a_0C_e}{a}|\tilde{u}_2(\cdot, 1)|_{L^{\infty}(0, T)}|\tilde{u}|_{L^{\infty}(0, T_1;H)} T^{1/2}_1\biggl( \int_0^{T_1}|\tilde{u}_y(t)|^2_H \biggr)^{1/2} \nonumber \\
%& +  \frac{a_0C_e}{a}|\tilde{u}_2(\cdot, 1)|_{L^{\infty}(0, T)}T_1 |\tilde{u}|^2_{L^{\infty}(0, T_1;H)} \nonumber \\
\leq & \frac{|s_t|_{L^{\infty}(0, T)}}{a} |\tilde{u}|_{V(T_1)} T^{1/2}_1 |f|_{V(T_1)}. 
%+ \frac{a_0}{a} C_e |\tilde{u}_2(\cdot, 1)|_{L^{\infty}(0, T)}T^{1/2}_1(1+T^{1/2})|\tilde{u}|^2_{V(T_1)}
\tag{3.20}
\end{align}
Hence, by putting $\delta=\mbox{min}\{1/2, 1/l^2\}$ %and $C(T)=(a_0C_e |\tilde{u}_2(\cdot, 1)|_{L^{\infty}(0, T)}(1+T^{1/2})/a$ 
we have that 
\begin{align*}
& \delta |\tilde{u}|_{V(T_1)} \leq \frac{|s_t|_{L^{\infty}(0, T)}}{a} T^{1/2}_1 |f|_{V(T_1)} \mbox{ for }T_1\in (0, T].
\end{align*}
%Now, we take $T^*_1$ such that $\delta -C(T)(T^*_1)^{1/2}=\frac{\delta}{2}$ and see that $T_1\leq T^*_1$ such that $\frac{2}{\delta} \frac{|s_t|_{L^{\infty}(0, T)}}{a} T^{1/2}_1 <1$. This implies that $\Gamma_{T_1}$ is a contraction mapping in $V(T_1)$. 
From this result, we infer that for some $T_1\leq T$ the mapping $\Gamma_{T_1}$ is a contraction on $V(T_1)$. 
Therefore, by Banach's fixed point theorem, it exists $\tilde{u}\in V(T_1)$ such that
$\Gamma_{T_1}(\tilde{u})=\tilde{u}$ which implies that $\tilde{u}$ is a solution of $(\mbox{AP}2)_{\varepsilon}(\tilde{u}_{0\varepsilon}, s, \eta, b)$ on $[0, T_1]$. Since $T_1$ is independent of the choice of initial data of $\tilde{u}$, by repeating the argument behind the local existence result, we can extend the solution $\tilde{u}$ beyond $T_1$. Thus, the conclusion of Lemma \ref{lem2} holds. 
\end{proof}

As next step, we provide an uniform estimate  with respect to $\varepsilon$ on a solution $\tilde{u}$ of $(\mbox{AP}2)_{\varepsilon}(\tilde{u}_{0\varepsilon}, s, \eta, b)$. 
%Next,  for given $s\in W^{1,2}(0, T)$ with $s(0)=s_0$ and $s \geq a$ on $[0, T]$, we construct a solution to $(\mbox{AP})_{\varepsilon}(\tilde{u}_{0\varepsilon}, s, \eta, b)$.

%%%%%%%%%%%%%%%%%%%%%%%%%%%%%%%%%%%%%%%%%%%%%%%%%%%%%%%%%%%%%%%%%%%%%%%%%%%%%%%%%%%%%%%%%%%%%%%%%%
%
%
% // Lemma 3.4 
%
%
%%%%%%%%%%%%%%%%%%%%%%%%%%%%%%%%%%%%%%%%%%%%%%%%%%%%%%%%%%%%%%%%%%%%%%%%%%%%%%%%%%%%%%%%%%%%%%%%%
\begin{lemma}
\label{lem6}
Let $T>0$, $s\in W^{1,\infty}(0, T)$ with $s>0$ on $[0, T]$, $\eta\in V(T)$ and $\tilde{u}_{\varepsilon}$ be a solution of $(\mbox{AP}2)_{\varepsilon}(\tilde{u}_{0\varepsilon}, s, \eta, b)$ on $[0, T]$ for each $\varepsilon>0$. Then, it holds that 
\begin{align}
%\label{3-15-5}
|\tilde{u}_{\varepsilon}(t)|^2_{H} + \int_0^t |\tilde{u}_{\varepsilon y}(\tau)|^2_{H} dy \leq M(1+ |\eta|^2_{V(T)}) %\int_0^t |(\rho_{\varepsilon}*\eta)(t, 1)|^2 dt)
%\int_0^t (|\tilde{u}_{ny}(\tau)|_{H}|\tilde{u}_n(\tau)|_{H} + |\tilde{u}_n(\tau)|^2_{H})d\tau 
\mbox{ for }t\in[0, T] \mbox{ and }\varepsilon \in (0, 1],
\tag{3.21}
\end{align}
where $M=M(a_0, a, \beta, b^*, T)$ is a positive constant which is independent of $\varepsilon$ and depends on $a_0$, $a$, $\beta$, $b^*$ and $T$, $\displaystyle{a=\min_{0\leq t\leq T}s(t)}$.
%If (A1)-(A3), then, for each $\varepsilon>0$, $(\mbox{AP})_{\varepsilon}(\tilde{u}_0, s, b)$ has a unique solution $\tilde{u}$ on $[0, T]$. 
\end{lemma}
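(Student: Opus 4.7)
The plan is to derive an $L^\infty(0,T;H)\cap L^2(0,T;X)$-type energy estimate by testing the equation with the solution itself, and to control the mollified boundary datum $\eta_\varepsilon$ uniformly in $\varepsilon$ via the standard bound $|\rho_\varepsilon * f|_{L^2(\mathbb{R})}\le |f|_{L^2(\mathbb{R})}$ together with the one-dimensional trace inequality.

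First, I would multiply the equation in $(\mbox{AP}2)_\varepsilon(\tilde{u}_{0\varepsilon}, s, \eta, b)$ by $\tilde{u}_\varepsilon(t)$, integrate over $(0,1)$, integrate by parts in the diffusive term, and substitute the boundary conditions. Writing $\eta_\varepsilon(t)=(\rho_\varepsilon * \eta)(t,1)$, one arrives at
\begin{align*}
& \frac{1}{2}\frac{d}{dt}|\tilde{u}_\varepsilon(t)|^2_H + \frac{1}{s^2(t)}|\tilde{u}_{\varepsilon y}(t)|^2_H + \frac{a_0}{s(t)}(\sigma(\tilde{u}_\varepsilon(t,1)))^2\tilde{u}_\varepsilon(t,1) + \frac{\beta\gamma}{s(t)}\tilde{u}_\varepsilon(t,0)^2 \\
&\quad = a_0\alpha\,\sigma(\eta_\varepsilon(t))\,\tilde{u}_\varepsilon(t,1) + \frac{\beta b(t)}{s(t)}\tilde{u}_\varepsilon(t,0) + \int_0^1 \frac{y\,s_t(t)}{s(t)}\tilde{u}_{\varepsilon y}(t,y)\,\tilde{u}_\varepsilon(t,y)\,dy.
\end{align*}
The two boundary terms on the left are non-negative (the first because $r\mapsto (\sigma(r))^2 r$ is non-negative on $\mathbb{R}$) and may simply be discarded.

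Next, I would estimate the three remaining terms on the right. Each trace value $|\tilde{u}_\varepsilon(t,j)|$ for $j=0,1$ is controlled by the one-dimensional inequality $|u(j)|^2\le 2(|u|_H^2+|u_y|_H^2)$; combined with Young's inequality with a sufficiently small parameter and $s(t)\le \ell:=\max_{[0,T]} s$, each of the two boundary contributions splits into a small fraction of $\frac{1}{s^2(t)}|\tilde{u}_{\varepsilon y}(t)|^2_H$ (absorbed by the parabolic dissipation on the left) plus lower-order terms bounded by $C\bigl(|\tilde{u}_\varepsilon(t)|^2_H + |\eta_\varepsilon(t)|^2 + |b(t)|^2\bigr)$. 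The advection integral is handled exactly as in the derivation of \eqref{3-13}--\eqref{3-14-1}, again producing a $|\tilde{u}_{\varepsilon y}|^2_H$ contribution that can be absorbed. Regarding $\eta_\varepsilon$, since $\eta\in V(T)$ the trace gives $|\eta(t,1)|^2\le 2(|\eta(t)|_H^2 + |\eta_y(t)|_H^2)$, so $|\eta(\cdot,1)|^2_{L^2(0,T)}\le C|\eta|^2_{V(T)}$, and then $\int_0^T |\eta_\varepsilon(\tau)|^2\,d\tau\le C|\eta|^2_{V(T)}$ uniformly in $\varepsilon\in(0,1]$.

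Finally, integrating the resulting differential inequality on $[0,t]$, using $|\tilde{u}_{0\varepsilon}|_H\le |\tilde{u}_0|_H+1$ and $b\in W^{1,2}(0,T)$, and applying Gronwall's lemma produces the claimed bound with $M$ depending on $a_0$, $a$, $\alpha$, $\beta$, $\gamma$, $b^*$, $\ell$, $|s_t|_{L^\infty(0,T)}$ and $T$ but not on $\varepsilon$. The step I expect to be most delicate is the book-keeping of the Young constants so that every trace-induced gradient piece fits strictly inside the available parabolic dissipation on the left; once that is arranged, the $|\eta|^2_{V(T)}$ factor on the right in the estimate emerges exactly from the mollifier bound above, which is the only place where the uniformity in $\varepsilon$ is really at stake.
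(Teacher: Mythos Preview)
Your proposal is correct and follows essentially the same route as the paper: test with $\tilde{u}_\varepsilon$, discard the sign-favourable boundary terms, control the remaining trace contributions by a one-dimensional Sobolev inequality combined with Young's inequality so that the gradient pieces are absorbed into the parabolic term, and finish with Gronwall together with the mollifier bound $\int_0^T|\eta_\varepsilon|^2\le\int_0^T|\eta(\cdot,1)|^2\le C|\eta|_{V(T)}^2$. The only cosmetic difference is that the paper uses the interpolation form $|z(y)|^2\le C_e|z|_X|z|_H$ rather than your cruder $|z(y)|^2\le 2(|z|_H^2+|z_y|_H^2)$, but either inequality suffices for the absorption argument.
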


\begin{proof}
Let $\tilde{u}_{\varepsilon}$ be a solution of $(\mbox{AP}2)_{\varepsilon}(\tilde{u}_{0\varepsilon}, s, \eta, b)$ on $[0, T]$ for each $\varepsilon>0$. 
%For given $s\in W^{1, 2}(0, T)$ with $s(0)=s_0$ and $s\geq a$ on $[0, T]$, we choose a sequence $\{s_n\} \subset W^{1, \infty}(0, T)$ and $l>0$ satisfying $a \leq s_n\leq l$ on $[0, T]$ for each $n\in \mathbb{N}$,
%$s_n \to s$ in $W^{1,2}(0, T)$ as $n \to \infty$. Let $\varepsilon>0$. By Lemma \ref{lem5} we can take a sequence $\{\tilde{u}_n\}=\{\tilde{u}_{\varepsilon, n} \}$ of solutions to $(\mbox{AP})_{\varepsilon}(\tilde{u}_0, s_n, \eta, b)$ on $[0, T]$. Then, we see that $t \to \psi^t(\tilde{u}_n(t))$ is absolutely continuous on $[0, T]$ so that $t \to \frac{1}{s^2_n(t)}|\tilde{u}_{ny}(t)|^2_{L^2(0, 1)}$ is continuous on $[0, T]$.
First, it holds that 
\begin{align}
\label{3-15}
& \frac{1}{2}\frac{d}{dt}|\tilde{u}_{\varepsilon}(t)|^2_{H} -\int_0^1 \frac{1}{s^2(t)}\tilde{u}_{\varepsilon yy}(t) \tilde{u}_{\varepsilon}(t) dy = \int_0^1 \frac{y s_{t}(t)}{s(t)} \tilde{u}_{\varepsilon y}(t) \tilde{u}_\varepsilon(t) dy.
\tag{3.22}
\end{align}
The second term on the left-hand side is as follows:
\begin{align*}
& -\int_0^1 \frac{1}{s^2(t)}\tilde{u}_{\varepsilon yy}(t) \tilde{u}_{\varepsilon} (t) dy \\
=& \frac{a_0}{s(t)}((\sigma(\tilde{u}_{\varepsilon}(t, 1))^2-\alpha \sigma(\eta_{\varepsilon}(t)) s(t)) \tilde{u}_{\varepsilon}(t, 1) \\
& -\frac{1}{s(t)}\beta(b(t)-\gamma \tilde{u}_{\varepsilon}(t, 0))\tilde{u}_{\varepsilon}(t, 0) + \frac{1}{s^2(t)}\int_0^1|\tilde{u}_{\varepsilon y}(t)|^2 dy \\
\geq & -a_0\alpha |\eta_{\varepsilon}(t)||\tilde{u}_{\varepsilon}(t, 1)|-\frac{1}{s(t)}\beta(b(t)-\gamma \tilde{u}_{\varepsilon}(t, 0))\tilde{u}_{\varepsilon}(t, 0) + \frac{1}{s^2(t)}\int_0^1|\tilde{u}_{\varepsilon y}(t)|^2 dy, 
\end{align*}
where $\eta_{\varepsilon}(t)=(\rho_{\varepsilon}*\eta)(t, 1)$. 
%By $b\varphi(a) \geq \varphi(s_n(t))$ for $t\in [0, T]$, we note that the first term in the right hand side is positive.
From the above, we obtain that
\begin{align}
& \frac{1}{2}\frac{d}{dt}|\tilde{u}_{\varepsilon}(t)|^2_{H} + \frac{1}{s^2_n(t)}\int_0^1|\tilde{u}_{\varepsilon y}(t)|^2 dy \nonumber \\
\leq &  \int_0^1 \frac{y s_{t}(t)}{s(t)} \tilde{u}_{\varepsilon y}(t) \tilde{u}_{\varepsilon}(t) dy \nonumber \\
& +a_0\alpha |\eta_{\varepsilon}(t)||\tilde{u}_{\varepsilon}(t, 1)| + \frac{1}{s(t)}\beta(b(t)-\gamma \tilde{u}_{\varepsilon}(t, 0))\tilde{u}_{\varepsilon}(t, 0) \mbox{ for } t\in [0, T].
\label{3-15-1} \tag{3.23}
\end{align}
We estimate the right-hand side of (\ref{3-15-1}). First, by Young's inequality we have that 
\begin{align}
\label{3-15-2}
\int_0^1 \frac{y s_{t}(t)}{s(t)} \tilde{u}_{\varepsilon y}(t) \tilde{u}_{\varepsilon}(t) dy \leq \frac{1}{4s^2(t)}\int_0^1|\tilde{u}_{\varepsilon y}(t)|^2dy + |s_{t}(t)|^2 \int_0^1|\tilde{u}_{\varepsilon}(t)|^2dy.
\tag{3.24}
\end{align}
Here, by Sobolev's embedding theorem in one dimension, we note that it holds that 
\begin{align}
\label{1d}
|z(y)|^2 \leq C_e |z|_{X} |z|_{H} \mbox{ for } z\in X \mbox{ and } y\in [0, 1], 
\tag{3.25}
\end{align}
where $C_e$ is a positive constant defined from Sobolev's embedding theorem. 
By (\ref{1d}) and $s \geq a$ on $[0, T]$,  we obtain  
\begin{align}
\label{3-15-3}
& \frac{1}{s(t) }\beta(b(t)-\gamma \tilde{u}_{\varepsilon}(t, 0))\tilde{u}_{\varepsilon}(t, 0) \leq \frac{\beta b^*}{s(t)}|\tilde{u}_{\varepsilon}(t, 0)| \nonumber \\
\leq & \frac{\beta b^*C_e}{2s(t)}\biggl(|\tilde{u}_{\varepsilon y}(t)|_{H}|\tilde{u}_{\varepsilon}(t)|_{H} + |\tilde{u}_{\varepsilon}(t)|^2_{H}\biggr) +
\frac{\beta b^*}{2 s(t)} \nonumber \\
\leq & \frac{1}{4s^2(t)}|\tilde{u}_{\varepsilon y}(t)|^2_{H} + \left( \frac{(\beta b^* C_e)^2}{4}+ \frac{\beta b^* C_e}{2a}\right)|\tilde{u}_{\varepsilon}(t)|^2_{H} + \frac{\beta b^*}{2a},
\tag{3.26}
\end{align}
and %Also, it holds that $|v_{\varepsilon, n}(t)|\leq C_e|\tilde{u}_n(t)|^{1/2}_{H^1(0, 1)}||\tilde{u}_n(t)|^{1/2}_{L^2(0, 1)}$ so that 
\begin{align}
\label{3-15-30}
& a_0\alpha |\eta_{\varepsilon}(t)||\tilde{u}_{\varepsilon}(t, 1)| \leq \frac{a_0\alpha }{2}\biggl ( C_e (|\tilde{u}_{\varepsilon y}(t)|_{H}|\tilde{u}_{\varepsilon}(t)|_{H} + |\tilde{u}_{\varepsilon}(t)|^2_{H} )+ |\eta_{\varepsilon}(t)|^2 \biggr) \nonumber \\
& \leq \frac{1}{4s^2(t)}|\tilde{u}_{\varepsilon y}(t)|^2_{H} + \biggl (s^2(t) \left(\frac{a_0\alpha}{2} C_e\right)^2 + \frac{a_0\alpha}{2} C_e\biggr)|\tilde{u}_{\varepsilon}(t)|^2_{H} + \frac{a_0 \alpha}{2}|\eta_{\varepsilon}(t)|^2.
\tag{3.27}
\end{align}
From (\ref{3-15})-(\ref{3-15-30}), we have that 
\begin{align}
\label{3-15-4}
& \frac{1}{2}\frac{d}{dt}|\tilde{u}_{\varepsilon}(t)|^2_{H} + \frac{1}{4s^2(t)}\int_0^1|\tilde{u}_{\varepsilon y}(\tau)|^2 dy \nonumber \\
\leq & \left( |s_{t}(t)|^2 + \frac{(\beta b^* C_e)^2}{4}+ \frac{\beta b^* C_e}{2a} \right)|\tilde{u}_{\varepsilon}(t)|^2_{H} + \frac{\beta b^*}{2a} \nonumber \\
& + \biggl (s^2(t) \left(\frac{a_0\alpha}{2} C_e\right)^2 + \frac{a_0\alpha}{2} C_e \biggr)|\tilde{u}_{\varepsilon}(t)|^2_{H} + \frac{a_0 \alpha}{2}|\eta_{\varepsilon}(t)|^2
 \mbox{ for }t\in[0, T].
\tag{3.28}
%\tag{3.18}\left( \frac{|s_{nt}(t)|^2}{{2}} + (\beta b^* C_e)^2+ \frac{\beta b^* C_e}{2s_0} \right)+ \biggl (s^2_n(t) \left(\frac{a_0\alpha}{2} C_e\right)^2 + \frac{a_0\alpha}{2} \biggr)
\end{align}
Now, we denote $F(t)$ the coefficient of $|\tilde{u}_{\varepsilon}|^2_{H}$ in the right-hand side. As $s\in W^{1,2}(0, T)$, we observe that $F \in L^1(0, T)$. 
Making use of Gronwall's inequality, we are led to 
\begin{align}
\label{3-15-5}
& \frac{1}{2} |\tilde{u}_{\varepsilon}(t)|^2_{H} + \frac{1}{4l^2}\int_0^t |\tilde{u}_{\varepsilon y}(\tau)|^2_{H} dy \nonumber \\
\leq & \biggl( \frac{1}{2}\biggl (|\tilde{u}_0|^2_{H} + 1+ \frac{\beta b^*}{a}T + a_0\alpha \int_0^{t_1} |\eta_{\varepsilon}(t)|^2 dt\biggr) \int_0^{t_1} F(t) dt \biggr) e^{\int_0^{t_1} F(t) dt} \mbox{ for }t\in[0, T].
\tag{3.29}
%\frac{1}{2} (|\tilde{u}_0|^2_{H}+1) + \int_0^t F(\tau)|\tilde{u}_{\varepsilon}(\tau)|^2_{H} d\tau + \frac{\beta b^*}{2a}T + \frac{a_0 \alpha}{2} \int_0^t |\eta_{\varepsilon}(\tau)|^2 d\tau
%\int_0^t (|\tilde{u}_{ny}(\tau)|_{H}|\tilde{u}_n(\tau)|_{H} + |\tilde{u}_n(\tau)|^2_{H})d\tau 
%\mbox{ for }t\in[0, T].
\end{align}
%Put $I_{\varepsilon}(t)=\int_0^t F(\tau) |\tilde{u}_{\varepsilon}(\tau)|^2_{H} d\tau$ for $t\in [0, T]$. Then, 
%we have 
%\begin{align*}
%\label{3-15-9}
%I'_{\varepsilon}(t) \leq F(t)(|\tilde{u}_0|^2_{H} +1 + \frac{\beta b^*}{a}T + a_0\alpha \int_0^{t} |\eta_{\varepsilon}(\tau)|^2 d\tau) + 2F(t) I_{\varepsilon}(t) \mbox{ for }t\in [0, T].
%\end{align*}
%Accordingly, by Gronwall's inequality we have 
%\begin{align}
%\label{3-15-10}
%I_{\varepsilon}(t_1) \leq \biggl( \biggl (|\tilde{u}_0|^2_{H} + 1+ \frac{\beta b^*}{a}T + a_0\alpha \int_0^{t_1} |\eta_{\varepsilon}(t)|^2 dt\biggr) %\int_0^{t_1} F(t) dt \biggr) e^{\int_0^{t_1} 2F(t) dt}
%\mbox{ for }t_1\in [0, T].
%\end{align}
We note that it holds that 
\begin{align}
\label{3-15-11}
&\int_0^T |\eta_{\varepsilon}(\tau, 1)|^2 d\tau \leq \int_0^T |\eta(\tau, 1)|^2 d\tau \leq C_e \int_0^T (|\eta_y(\tau)|_{H}|\eta(\tau)|_{H} + |\eta(\tau)|^2_H) dt \nonumber \\
& \leq C_e \biggl(|\eta|_{L^{\infty}(0, T; H)} T^{1/2} \biggl( \int_0^T |\eta_y(\tau)|^2_H d\tau \biggr)^{1/2} + T|\eta|^2_{L^{\infty}(0, T; H)}  \biggr)  \nonumber \\
& \leq C_eT^{1/2}(1+T^{1/2}) |\eta|^2_{V(T_1)}.
\tag{3.30}
\end{align}
Therefore, by (\ref{3-15-5}) and (\ref{3-15-11}) we see that there exists a positive constant $M=M(a_0, a, \beta, b^*, T)$  such that Lemma \ref{lem6} holds. 
%By putting the right-hand side of (\ref{3-15-10}) by $M_1(T)$, from (\ref{3-15-8}) we see that 
%\begin{align}
%\label{3-15-11}
%\frac{1}{2} |\tilde{u}_{\varepsilon}(t)|^2_{H} + \frac{1}{4l^2}\int_0^t |\tilde{u}_{\varepsilon y}(\tau)|^2_{H} dy \leq \frac{1}{2} |\tilde{u}_0|^2_{H} + %M_1(T) + \frac{\beta b^*}{2a}T +\frac{a_0\alpha}{2} M(T)\mbox{ for }t\in[0, T].
%\end{align}
%We put the right-hand side of (\ref{3-15-11}) by $M_2$. 
\end{proof}

%%%%%%%%%%%%%%%%%%%%%%%%%%%%%%%%%%%%%%%%%%%%%%%%%%%%%%%%%%%%%%%%%%%%%%%%%%%%%%%%%%%%%%%%%%%%%
%
%
% Lemma 3.5 // 
%
%
%%%%%%%%%%%%%%%%%%%%%%%%%%%%%%%%%%%%%%%%%%%%%%%%%%%%%%%%%%%%%%%%%%%%%%%%%%%%%%%%%%%%%%%%%%%%%

\begin{lemma}
\label{lem7}
Let $T>0$, $s\in W^{1,\infty}(0, T)$ with $s>0$ on $[0, T]$ and $\eta \in V(T)$. If (A1)-(A3), then, $(\mbox{AP}2)(\tilde{u}_0, s, \eta, b)$ has a unique solution $\tilde{u}$ on $[0, T]$. 
\end{lemma}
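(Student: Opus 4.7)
The strategy is to pass to the limit $\varepsilon \to 0$ in the regularized solutions $\tilde{u}_\varepsilon$ of $(\mbox{AP}2)_{\varepsilon}(\tilde{u}_{0\varepsilon}, s, \eta, b)$, whose existence is given by Lemma \ref{lem2} and whose uniform bounds are provided by Lemma \ref{lem6}. From Lemma \ref{lem6}, $\{\tilde{u}_\varepsilon\}$ is bounded in $V(T)$; hence, up to a subsequence, $\tilde{u}_\varepsilon \rightharpoonup \tilde{u}$ weakly-$\ast$ in $L^\infty(0,T;H)$ and weakly in $L^2(0,T;X)$ for some limit $\tilde{u} \in V(T)$.

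To upgrade to the strong convergence needed for the nonlinear boundary term, I would derive an $\varepsilon$-independent bound on $\tilde{u}_{\varepsilon t}$ in $L^2(0,T;X^*)$. Testing the equation against $z \in X$ and integrating by parts, one gets
\begin{align*}
\langle \tilde{u}_{\varepsilon t}(t), z \rangle_X = & -\int_0^1 \frac{1}{s^2(t)} \tilde{u}_{\varepsilon y}(t) z_y \, dy + \int_0^1 \frac{y s_t(t)}{s(t)} \tilde{u}_{\varepsilon y}(t) z \, dy \\
& + \frac{\beta}{s(t)}(b(t) - \gamma \tilde{u}_\varepsilon(t,0)) z(0) \\
& - \frac{a_0}{s(t)}\bigl(\sigma(\tilde{u}_\varepsilon(t,1))^2 - \alpha \sigma((\rho_\varepsilon \ast \eta)(t,1)) s(t)\bigr) z(1).
\end{align*}
Using $s \in W^{1,\infty}(0,T)$ with $s \geq a >0$, the Sobolev embedding $X \hookrightarrow C([0,1])$, and Lemma \ref{lem6}, each term is controlled uniformly in $\varepsilon$, giving $\{\tilde{u}_{\varepsilon t}\}$ bounded in $L^2(0,T;X^*)$. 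The Aubin--Lions compactness lemma, combined with the compact embedding of $X$ into $H$, then yields (along a further subsequence) strong convergence $\tilde{u}_\varepsilon \to \tilde{u}$ in $L^2(0,T;H)$ and in $C([0,T];X^*)$. Since the trace operator $z \mapsto z(y_0)$ is compact from $X$ to $\mathbb{R}$ for any $y_0 \in [0,1]$, the same compactness argument delivers the strong traces $\tilde{u}_\varepsilon(\cdot,0) \to \tilde{u}(\cdot,0)$ and $\tilde{u}_\varepsilon(\cdot,1) \to \tilde{u}(\cdot,1)$ in $L^2(0,T)$.

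The principal obstacle, namely passing to the limit in the nonlinear boundary flux $a_0 \sigma(\tilde{u}_\varepsilon(t,1))^2$, is now resolved: the strong trace convergence together with the continuity of $r \mapsto \sigma(r)^2$ and the uniform bound on $\tilde{u}_\varepsilon(\cdot,1)$ inherited from Lemma \ref{lem6} via the Sobolev embedding imply $\sigma(\tilde{u}_\varepsilon(\cdot,1))^2 \to \sigma(\tilde{u}(\cdot,1))^2$ in $L^1(0,T)$. Standard mollifier properties give $\rho_\varepsilon \ast \eta \to \eta$ in $L^2$, so $\sigma((\rho_\varepsilon \ast \eta)(\cdot,1)) \to \sigma(\eta(\cdot,1))$ in $L^2(0,T)$. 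The interior terms and the remaining linear boundary term pass to the limit by the combination of weak and strong convergences. Finally, $\tilde{u}_{0\varepsilon} \to \tilde{u}_0$ in $H$ and $\tilde{u}_\varepsilon \to \tilde{u}$ in $C([0,T];X^*)$ preserve the initial condition. Thus $\tilde{u}$ satisfies (S'1), (S''2), (S'3), proving existence.

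For uniqueness, I would let $\tilde{u}_1$, $\tilde{u}_2$ be two solutions, set $\tilde{u} := \tilde{u}_1 - \tilde{u}_2$, and test the equation for the difference against $\tilde{u}$ itself. Because $\eta$ is common to both problems, the $\sigma(\eta)$-dependent contribution cancels identically. The flux difference at $y=1$ produces the term $\bigl(\sigma(\tilde{u}_1(t,1))^2 - \sigma(\tilde{u}_2(t,1))^2\bigr)\bigl(\tilde{u}_1(t,1) - \tilde{u}_2(t,1)\bigr) \geq 0$, since $r \mapsto \sigma(r)^2$ is monotone non-decreasing on $\mathbb{R}$; similarly, the Robin term at $y=0$ yields a non-negative contribution. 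The convection term $\int_0^1 \frac{y s_t(t)}{s(t)} \tilde{u}_y \tilde{u} \, dy$ is handled by Young's inequality exactly as in the proof of Lemma \ref{lem2}, and Gronwall's inequality then forces $\tilde{u} \equiv 0$, completing the proof.
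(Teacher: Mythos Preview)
Your proposal is correct and follows essentially the same route as the paper's proof: uniform $V(T)$ bounds from Lemma \ref{lem6}, an $L^2(0,T;X^*)$ estimate on $\tilde{u}_{\varepsilon t}$ obtained by testing against $z\in X$, Aubin--Lions compactness to upgrade to strong $L^2(0,T;H)$ convergence, trace convergence via the one-dimensional interpolation inequality, and then passage to the limit term by term; the uniqueness argument via monotonicity of $r\mapsto\sigma(r)^2$, absorption of the convection term by Young's inequality, and Gronwall is also exactly what the paper does. The only cosmetic difference is that the paper estimates the nonlinear boundary difference directly against the test function $z(t,1)$ using Cauchy--Schwarz and the bound $\int_0^T|\tilde{u}_\varepsilon(t,1)|^2|z(t,1)|^2\,dt\leq C$, rather than first establishing $L^1(0,T)$ convergence of $\sigma(\tilde{u}_\varepsilon(\cdot,1))^2$ as you propose.
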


\begin{proof}
%At the end of this section, by letting $\varepsilon \to 0$ we show the existence of a solution $\tilde{u}$ of $(\mbox{AP})(\tilde{u}_0, s, \eta, b)$ on $[0, T]$. 
Let $s\in W^{1,\infty}(0, T)$ with $s>0$ on $[0, T]$. Then, we already have a solution $\tilde{u}_{\varepsilon}$ of $(\mbox{AP}2)_{\varepsilon}(\tilde{u}_{0\varepsilon}, s, \eta, b)$ on $[0, T]$ for each $\varepsilon>0$.
By letting $\varepsilon \to 0$ we show the existence of a solution $\tilde{u}$ of $(\mbox{AP}2)(\tilde{u}_0, s, \eta, b)$ on $[0, T]$.  First, by Lemma \ref{lem6}
%Similarly to the derivation of (\ref{3-15-8}), we have 
%\begin{align}
%\label{3-26}
%& \frac{1}{2} |\tilde{u}_{\varepsilon}(t)|^2_{H} + \frac{1}{4l^2}\int_0^t |\tilde{u}_{{\varepsilon}y}(\tau)|^2_{H} dy \nonumber \\
%%\leq & \frac{1}{2} |\tilde{u}_0|^2_{H} + \frac{a_0 \alpha}{2} |\tilde{u}_0(1)|^2\varepsilon \nonumber \\
%& + \int_0^t \biggl (F(\tau) + l^2 \left(\frac{a_0 \alpha}{2} C_e\right)^2 + \frac{a_0\alpha}{2}C_e\biggr)|\tilde{u}_{\varepsilon}(\tau)|^2_{H} d\tau + %\frac{\beta b^*}{2s_0}T \mbox{ for }t\in[0, T],
%\end{align}
%where
%\begin{align*}
%F(t)=\left( \frac{|s_{t}(t)|^2}{{2}} + (\beta b^* C_e)^2+ \frac{\beta b^* C_e}{2a} \right)+ \biggl (s^2(t) \left(\frac{a_0\alpha}{2} C_e\right)^2 + %\frac{a_0\alpha}{2} \biggr). 
%\end{align*}
%Since $F\in L^1(0, T)$ and (\ref{3-26}), 
we see that $\{ \tilde{u}_{\varepsilon} \}$ is bounded in $L^{\infty}(0, T; H)\cap L^2(0, T; X)$. Next, for $z\in X$, it holds that 
\begin{align}
\label{3-27}
& \biggl| \int_0^1 \tilde{u}_{\varepsilon t}(t) z dy \biggr| \nonumber \\
= & \biggl| -\frac{1}{s^2(t)} \biggl(\int_0^1 \tilde{u}_{\varepsilon y}(t) z_y dy\biggr) -\frac{a_0}{s(t)} ((\sigma(\tilde{u}_{\varepsilon}(t, 1))^2-\alpha\sigma((\rho_{\varepsilon}*\eta)(t, 1))s(t))z(1)  \nonumber \\
& + \frac{1}{s(t)}\beta(b(t)-\gamma \tilde{u}_{\varepsilon}(t, 0))z(0)  + \int_0^1\frac{y s_t(t)}{s(t)} \tilde{u}_{\varepsilon y}(t) z dy \biggr| \nonumber \\
\leq & \frac{1}{a^2} |\tilde{u}_{\varepsilon y}(t)|_{H} |z_y|_{H} + \frac{a_0}{a} |\tilde{u}_{\varepsilon}(t, 1)|^2 |z(1)| + a_0 \alpha |(\rho_{\varepsilon}*\eta)(t, 1)||z(1)| \nonumber \\
& + \frac{\beta}{a} b^*|z(0)| + \frac{\beta}{a} \gamma |\tilde{u}_{\varepsilon}(t, 0)||z(0)| 
+ \frac{|s_t(t)|}{a} \biggl (|\tilde{u}_{\varepsilon}(t, 1)||z(1)| + |\tilde{u}_{\varepsilon}(t)|_H(|z_y|_H + |z|_H) \biggr) \nonumber \\
& \mbox{ for a.e. }t\in [0, T]. 
\tag{3.31}
\end{align}
Here, we note that 
\begin{align*}
%\label{3-30-1}
\int_0^1\frac{y s_t(t)}{s(t)} \tilde{u}_{\varepsilon y}(t) zdy = \frac{s_t(t)}{s(t)} \biggl (\tilde{u}_{\varepsilon}(t, 1)z(1)-\int_0^1 \tilde{u}_{\varepsilon}(t)(y z_y +z)dy \biggr). 
\end{align*}
By the estimate (\ref{3-27}) and (\ref{1d}) we infer that $\{ \tilde{u}_{\varepsilon t}\}$ is bounded in $L^2(0, T; X^*)$. 
Therefore, we take a subsequence $\{ {\varepsilon}_i \}\subset \{\varepsilon \}$ such that for some $\tilde{u}\in W^{1, 2}(0, T; X^*)\cap L^{\infty}(0, T; H)\cap L^2(0, T; X)$, $\tilde{u}_{{\varepsilon}_i} \to \tilde{u}$ weakly in $W^{1,2}(0, T; X^*) \cap L^2(0, T; X)$, weakly-* in $L^{\infty}(0, T;H)$ as $i \to \infty$.  Also, by Aubin's compactness theorem, we see that $\tilde{u}_{{\varepsilon}_i} \to \tilde{u}$ in $L^2(0, T; H)$ as $i \to \infty$. 

Now, we prove that the limit function $\tilde{u}$ is a solution of $(\mbox{AP}2)(\tilde{u}_0, s, \eta, b)$ on $[0, T]$ satisfying $\tilde{u}\in W^{1, 2}(0, T; X^*)\cap V(T)$, (S''2) and (S3). Let $z\in V(T)$. Then, it holds that 
\begin{align}
\label{3-31}
& \int_0^T \int_0^1 \tilde{u}_{\varepsilon t}(t) z(t) dy dt + \int_0^T \frac{1}{s^2(t)} \biggl(\int_0^1 \tilde{u}_{\varepsilon y}(t) z_y(t) dy\biggr)dt \nonumber \\
& +  \int_0^T \frac{a_0}{s(t)} ((\sigma(\tilde{u}_{\varepsilon}(t, 1))^2-\alpha \sigma((\rho_{\varepsilon}*\eta)(t, 1)) s(t))z(t, 1) dt \nonumber \\
& -\int_0^T\frac{1}{s(t)}\beta(b(t)-\gamma \tilde{u}_{\varepsilon}(t, 0))z(t, 0) dt = \int_0^T \int_0^1\frac{y s_t(t)}{s(t)} \tilde{u}_{\varepsilon y}(t) z(t) dy dt.
\tag{3.32}
\end{align}
From the weak convergence, it is easy to see that 
\begin{align*}
& \int_0^T\int_0^1 \tilde{u}_{\varepsilon_i t}(t) z(t) dy dt \to \int_0^T \langle \tilde{u}_t(t), z(t)\rangle_X dt, \\
& \int_0^T \frac{1}{s^2(t)} \biggl(\int_0^1 \tilde{u}_{\varepsilon_i y}(t) z_y(t) dy\biggr)dt \to \int_0^T \frac{1}{s^2(t)} \biggl(\int_0^1 \tilde{u}_{y}(t) z_y(t) dy\biggr)dt \mbox{ as } i \to \infty.
\end{align*}
The third term of the right-hand side of (\ref{3-31}) is as follows:
\begin{align*}
&\biggl| \int_0^T \frac{a_0}{s(t)} ((\sigma(\tilde{u}_{\varepsilon}(t, 1))^2-\alpha \sigma((\rho_{\varepsilon}*\eta)(t, 1)) s(t))z(t, 1) dt \\
&  \hspace{3.5cm}- \int_0^T \frac{a_0}{s(t)} ((\sigma(\tilde{u}(t, 1))^2-\alpha \sigma(\eta(t, 1))s(t))z(t, 1) dt \biggr| \\
\leq & \frac{a_0}{a}\biggl( \int_0^T|\tilde{u}_{\varepsilon}(t, 1)-\tilde{u}(t, 1)|^2dt \biggr)^{1/2} \biggl( \int_0^T2(|\tilde{u}_{\varepsilon}(t, 1)|^2+|\tilde{u}(t)|^2)|z(t, 1)|^2 dt \biggr)^{1/2}. \\
%& + \frac{1}{a}\biggl( \int_0^T|\tilde{u}_{\varepsilon}(t, 1)-\tilde{u}(t, 1)|^2dt \biggr)^{1/2}\biggl( \int_0^T |\tilde{u}(t, 1)|^2|z(t, 1)|^2 dt \biggr)^{1/2} \\
& + a_0 \alpha \int_0^T |(\rho_{\varepsilon}*\eta)(t, 1)-\eta(t, 1)||z(t, 1)|dt
\end{align*}
Here, by (\ref{1d}) we note that it holds that 
\begin{align}
\label{3-32}
&\int_0^T|\tilde{u}_{\varepsilon}(t, z)-\tilde{u}(t, z)|^2dt \nonumber \\
\leq & C_e \biggl( \int_0^T|\tilde{u}_{\varepsilon}(t)-\tilde{u}(t)|^2_{X}\biggr)^{1/2}
\biggl( \int_0^T|\tilde{u}_{\varepsilon}(t)-\tilde{u}(t)|^2_{H}\biggr)^{1/2} \mbox{ for }z=0, 1,
\tag{3.33}
\end{align}
and 
\begin{align*}
 &\int_0^T|\tilde{u}_{\varepsilon}(t, 1)|^2|z(t, 1)|^2 dt \leq C^2_e \int_0^T |\tilde{u}_{\varepsilon}(t)|_{X}|\tilde{u}_{\varepsilon}(t)|_{H}
 |z(t)|_{X}|z(t)|_{H} dt \\
& \leq C^2_e |\tilde{u}_{\varepsilon}|_{L^{\infty}(0, T; H)} |z|_{L^{\infty}(0, T; H)} \biggl( \int_0^T|\tilde{u}_{\varepsilon}(t)|^2_{X}\biggr)^{1/2}\biggl( \int_0^T|z(t)|^2_{X}\biggr)^{1/2}.
\end{align*}
Since $(\rho_{\varepsilon}*\eta)(t, 1) \to \eta(t, 1)$ in $L^2(0, T)$ as $\varepsilon \to 0$,  
by the boundedness of $\tilde{u}_{\varepsilon}$ in $L^2(0, T; X)$, $\tilde{u}\in L^2(0, T; X)$ and the strong convergence in $L^2(0, T; H)$ we see that 
\begin{align*}
& \int_0^T \frac{a_0}{s(t)} ((\sigma(\tilde{u}_{\varepsilon_i}(t, 1))^2-\alpha \sigma((\rho_{\varepsilon_i}*\eta)(t, 1))s(t))z(t, 1) dt \\
\to & \int_0^T \frac{a_0}{s(t)} ((\sigma(\tilde{u}(t, 1))^2-\alpha \sigma(\eta(t, 1))s(t))z(t, 1) dt \mbox{ as } i \to \infty.
\end{align*}
What concerns the forth term on the right-hand side of (\ref{3-31}), by (\ref{3-32}) it follows that 
\begin{align*}
\int_0^T\frac{1}{s(t)}\beta(b(t)-\gamma \tilde{u}_{\varepsilon_i}(t, 0))z(t, 0) dt
\to \int_0^T\frac{1}{s(t)}\beta(b(t)-\gamma \tilde{u}(t, 0))z(t, 0) dt \mbox{ as }i \to \infty.
\end{align*}
Moreover, since $y\frac{s_t}{s}z\in L^2(0, T; H)$, by  weak convergence it holds that 
\begin{align*}
& \int_0^T \int_0^1\frac{y s_t(t)}{s(t)} \tilde{u}_{\varepsilon_i y}(t) z(t) dy dt = \int_0^T (\tilde{u}_{\varepsilon_i y}(t), y\frac{s_t(t)}{s(t)}z(t))_{H} dt \\
& \to \int_0^T (\tilde{u}_{y}(t), y\frac{s_t(t)}{s(t)}z(t))_{H} dt = \int_0^T \int_0^1\frac{y s_t(t)}{s(t)} \tilde{u}_{y}(t) z(t) dy dt
\mbox{ as }i \to \infty.
\end{align*}
Therefore, by the limiting process $i \to \infty$ in (\ref{3-31}) we see that the limit function $\tilde{u}$ is a solution of $(\mbox{AP}2)(\tilde{u}_0, s, \eta, b)$ on $[0, T]$. Also, the solution $\tilde{u}$ is unique. Indeed, let $\tilde{u}_1$ and $\tilde{u}_2$ be a solution of $(\mbox{AP}2)(\tilde{u}_0, s, \eta, b)$ on $[0, T]$ and put $\tilde{u}=\tilde{u}_1-\tilde{u}_2$. Then, (S''2) implies that 
\begin{align}
\label{3-33}
& \langle \tilde{u}_t, z \rangle_X + \frac{1}{s^2} \biggl(\int_0^1 \tilde{u}_{y}z_y dy\biggr)\nonumber \\
& +  \frac{a_0}{s} \biggl((\sigma(\tilde{u}_1(\cdot, 1))^2-\alpha\sigma( \eta (\cdot, 1))s)-((\sigma(\tilde{u}_2(\cdot, 1))^2-\alpha \sigma(\eta(\cdot, 1))s) \biggr)z(1)\nonumber \\
& +\frac{1}{s}\beta \gamma (\tilde{u}_1(\cdot, 0)-\tilde{u}_2(\cdot, 0))z(0) = \int_0^1\frac{y s_t}{s} \tilde{u}_yz dy
\mbox{ for }z\in X \mbox{ a.e. on }[0, T].
\tag{3.34}
\end{align}
We take $z=\tilde{u}$ in (\ref{3-33}). Then, we have
\begin{align}
\label{3-34}
& \frac{1}{2} \frac{d}{dt}|\tilde{u}(t)|^2_{H} + \frac{1}{s^2} \int_0^1|\tilde{u}_y(t)|^2 dy + \frac{a_0}{s(t)} \biggl ((\sigma(\tilde{u}_1(t, 1))^2-(\sigma(\tilde{u}_2(t, 1))^2 \biggr)\tilde{u}(t, 1) \nonumber \\
& %- a_0\alpha |\tilde{u}(t, 1)|^2 
+ \frac{\beta \gamma}{s(t)} |\tilde{u}(t, 0)|^2=  \int_0^1\frac{y s_t(t)}{s(t)} \tilde{u}_y(t)\tilde{u}(t) dy \mbox{ for a.e. }t\in [0, T].
\tag{3.35}
\end{align}
The third and forth terms  in  the left-hand side of (\ref{3-34}) are non-negative. 
%\begin{align*}
%& \frac{a_0}{s(t)} \biggl (\tilde{u}_1(t, 1)\sigma(\tilde{u}_1(t, 1))-\tilde{u}_2(t, 1)\sigma(\tilde{u}_2(t, 1)) \biggr)\tilde{u}(t, 1) \\
%\geq & -\frac{a_0}{s(t)}|\tilde{u}_2(t, 1)||\tilde{u}(t, 1)|^2 \geq -\frac{a_0}{s(t)}|\tilde{u}_2(t, 1)|C_e (|\tilde{u}_y(t)|_{H}|\tilde{u}(t)|_{H} + |\tilde{u}(t)|^2_{H}) \\
%\geq & -\frac{1}{4s^2(t)}|\tilde{u}_y(t)|^2_H - \biggl( (a_0C_e|\tilde{u}_2(t, 1)|)^2 + \frac{a_0C_e}{a} |\tilde{u}_2(t, 1)|\biggr)|\tilde{u}(t)|^2_H. 
%\end{align*}
%Here, by (\ref{1d}) and (\ref{3-15-11}) we infer that $\tilde{u}_2(\cdot, 1)\in L^2(0, T)$. 
For % the fourth term in the left-hand side of (\ref{3-34}) and 
the right-hand side of (\ref{3-34}), we have 
%\begin{align*}
%& a_0 \alpha|\tilde{u}(t, 1)|^2 \leq a_0 \alpha C_e (|\tilde{u}_y(t)|_{H}|\tilde{u}(t)|_{H} + |\tilde{u}(t)|^2_{H}) \\
%\leq & \frac{1}{4s^2(t)}|\tilde{u}_y(t)|^2_{H} + \biggl( s^2(t)(C_e a_0 \alpha)^2 + C_e a_0 \alpha \biggr) |\tilde{u}(t)|^2_{H},
%\end{align*}
%and 
\begin{align*}
\int_0^1\frac{y s_t(t)}{s(t)} \tilde{u}_y(t)\tilde{u}(t) dy\ \leq \frac{|s_t(t)|}{s(t)}|\tilde{u}_y(t)|_{H} |\tilde{u}(t)|_{H} \leq \frac{1}{4s^2(t)}|\tilde{u}_y(t)|^2_{H} + |s_t(t)|^2|\tilde{u}(t)|^2_{H}.
\end{align*}
Therefore, by adding these estmates to (\ref{3-34}) and Gronwall's inequality, we have the uniqueness of a solution $\tilde{u}$ to $(\mbox{AP}2)(\tilde{u}_0, s, \eta, b)$ on $[0, T]$.
\end{proof}

%%%%%%%%%%%%%%%%%%%%%%%%%%%%%%%%%%%%%%%%%%%%%%%%%%%%%%%%%%%%%%%%%%%%%%%%%%%%%%%%%%%%%%%%%%%%%
%
%
% Lemma 3.6 // 
%
%
%%%%%%%%%%%%%%%%%%%%%%%%%%%%%%%%%%%%%%%%%%%%%%%%%%%%%%%%%%%%%%%%%%%%%%%%%%%%%%%%%%%%%%%%%%%%%

\begin{lemma}
\label{lem5}
Let $T>0$ and $s\in W^{1,\infty}(0, T)$ with $s>0$ on $[0, T]$. Then, $(\mbox{AP}1)(\tilde{u}_{0}, s, b)$ has a unique solution $\tilde{u}$ on $[0, T]$. 
\end{lemma}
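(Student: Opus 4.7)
The plan is to prove Lemma \ref{lem5} by a second application of Banach's fixed point theorem, this time used to eliminate the auxiliary trace $\eta$ introduced in $(\mbox{AP}2)(\tilde{u}_0, s, \eta, b)$. I define the solution map $\Lambda_T : V(T) \to V(T)$ by $\Lambda_T(\eta) := \tilde{u}$, where $\tilde{u}$ is the unique solution of $(\mbox{AP}2)(\tilde{u}_0, s, \eta, b)$ furnished by Lemma \ref{lem7}. The crucial observation is that at a fixed point $\eta = \tilde{u}$ one has $\sigma(\eta(\cdot, 1)) = \sigma(\tilde{u}(\cdot, 1))$, so that boundary condition (3.7) collapses into (3.3), making $\tilde{u}$ precisely a solution of $(\mbox{AP}1)(\tilde{u}_0, s, b)$.

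For the contraction estimate, pick $\eta_1, \eta_2 \in V(T_1)$ with $T_1 \le T$, write $\tilde{u}_i = \Lambda_{T_1}(\eta_i)$, and set $\tilde{u} := \tilde{u}_1 - \tilde{u}_2$, $\eta := \eta_1 - \eta_2$. Subtracting the two weak formulations (S''2) and testing with $z = \tilde{u}$ gives, for a.e. $t \in [0, T_1]$,
\begin{align*}
& \frac{1}{2}\frac{d}{dt}|\tilde{u}(t)|^2_H + \frac{1}{s^2(t)}|\tilde{u}_y(t)|^2_H + \frac{a_0}{s(t)}\bigl((\sigma(\tilde{u}_1(t,1)))^2 - (\sigma(\tilde{u}_2(t,1)))^2\bigr)\tilde{u}(t,1) \\
& + \frac{\beta\gamma}{s(t)}|\tilde{u}(t,0)|^2 = a_0 \alpha \bigl(\sigma(\eta_1(t,1)) - \sigma(\eta_2(t,1))\bigr)\tilde{u}(t,1) + \int_0^1 \frac{ys_t(t)}{s(t)}\tilde{u}_y(t)\tilde{u}(t)\,dy.
\end{align*}
The third and fourth terms on the left-hand side are non-negative, by monotonicity of $\sigma$ and positivity of $s$. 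On the right-hand side, I use the Lipschitz bound $|\sigma(a) - \sigma(b)| \le |a - b|$, the one-dimensional trace estimate (\ref{1d}) for $|\tilde{u}(t,1)|^2$, and Young's inequality to absorb $|\tilde{u}_y(t)|^2_H$ terms into the left. This yields an inequality of the form
\begin{align*}
\frac{d}{dt}|\tilde{u}(t)|^2_H + \frac{1}{s^2(t)}|\tilde{u}_y(t)|^2_H \le 2F(t)|\tilde{u}(t)|^2_H + a_0\alpha |\eta(t,1)|^2,
\end{align*}
where $F \in L^1(0, T)$ depends only on $a_0, \alpha, C_e$, and $|s|_{W^{1,\infty}(0,T)}$. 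Because $\tilde{u}(0) = 0$, Gronwall's inequality combined with the trace-in-time estimate (\ref{3-15-11}) gives
\begin{align*}
|\tilde{u}|^2_{V(T_1)} \le K\, T_1^{1/2}(1 + T_1^{1/2})\,|\eta|^2_{V(T_1)},
\end{align*}
for some constant $K$ depending on the data but independent of $T_1$.

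Picking $T_1 \le T$ so small that $K T_1^{1/2}(1 + T_1^{1/2}) < 1$ turns $\Lambda_{T_1}$ into a strict contraction on $V(T_1)$, and Banach's fixed point theorem delivers a unique fixed point, hence a solution $\tilde{u}$ of $(\mbox{AP}1)(\tilde{u}_0, s, b)$ on $[0, T_1]$. Since $T_1$ is determined only by $s$, $b$, and structural constants, the argument iterates with $\tilde{u}(T_1) \in H$ as fresh initial data and extends the solution to the whole of $[0, T]$. Uniqueness on $[0, T]$ follows from the analogous energy estimate for the difference of two hypothetical solutions of $(\mbox{AP}1)$: the cubic boundary contribution $((\sigma(\tilde{u}_1(t,1)))^2 - (\sigma(\tilde{u}_2(t,1)))^2)(\tilde{u}_1(t,1) - \tilde{u}_2(t,1)) \ge 0$ is absorbed with the right sign, while the non-monotonic piece $-a_0\alpha s(t)(\sigma(\tilde{u}_1(t,1)) - \sigma(\tilde{u}_2(t,1)))(\tilde{u}_1(t,1) - \tilde{u}_2(t,1))$ is controlled via (\ref{1d}) and Young's inequality, and Gronwall closes the loop. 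The delicate point is to verify that the contraction constant in (\ref{3-15-11}) does not depend on the initial datum used at each restart, which is what allows the step-by-step extension to reach $[0, T]$; this is the main technical obstacle, and it is resolved by the data-only dependence of $K$.
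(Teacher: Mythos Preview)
Your proof is correct and follows essentially the same strategy as the paper's: both define the solution operator $\eta \mapsto \tilde{u}$ via Lemma~\ref{lem7}, derive an energy inequality for the difference by testing with $\tilde{u}_1-\tilde{u}_2$, exploit the monotonicity of $r\mapsto(\sigma(r))^2$ to discard the cubic boundary term, control the remaining $a_0\alpha$-term via the trace inequality \eqref{1d} and \eqref{3-15-11}, and obtain a contraction on $V(T_1)$ for small $T_1$ independent of the initial datum, then iterate. The only difference is cosmetic: the paper integrates directly and absorbs the convective term $\frac{|s_t|_{L^\infty}}{a}T_1^{1/2}|\tilde u|_{V(T_1)}^2$ into the left-hand side (yielding a $T_1^{1/4}$ contraction factor, cf.\ (3.38)--(3.39)), whereas you use Young's inequality pointwise and then Gronwall (yielding a $T_1^{1/2}$ factor); both routes are valid and lead to the same conclusion.
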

\begin{proof}
%Note that for given $\eta \in V(T)$, $\rho_{\varepsilon}*\eta \in W^{1,2}(0, T)$, where $\rho_{\varepsilon}$ is the same function in (\ref{3-5}). Hence, %by considering  $(\mbox{AP})(\tilde{u}_0, s, v, b)$ as $v=\rho_{\varepsilon}*\eta$, 
From Lemma \ref{lem7}, we see that $(\mbox{AP}2)(\tilde{u}_{0}, s, \eta, b)$ has a solution $\tilde{u}$ on $[0, T]$ such that $\tilde{u}\in W^{1, 2}(0, T; X^*)\cap L^{\infty}(0, T; H)\cap L^2(0, T; X)$.
Define a solution operator $\delta _T(\eta)=\tilde{u}$, where $\tilde{u}$ is a unique solution of  $(\mbox{AP}2)(\tilde{u}_{0}, s, \eta, b)$ for given $\eta \in V(T)$. Let put $\delta_T(\eta_i)=\tilde{u}_i$ for $i=1, 2$ and $\eta=\eta_1-\eta_2$ and $\tilde{u}=\tilde{u}_1-\tilde{u}_2$. 
Then, by (S''2) it holds that 
\begin{align}
\label{ap-5}
& \frac{1}{2} \frac{d}{dt}|\tilde{u}(t)|^2_{H} + \frac{1}{s^2} \int_0^1|\tilde{u}_y(t)|^2 dy \nonumber \\
& +\frac{a_0}{s(t)} \biggl((\sigma(\tilde{u}_1(t, 1))^2-\alpha \sigma(\eta_1 (t, 1))s(t))-((\sigma(\tilde{u}_2(t, 1))^2-\alpha \sigma(\eta_2(t, 1))s(t)) \biggr)\tilde{u}(t, 1) \nonumber \\
& %- a_0\alpha |\tilde{u}(t, 1)|^2 
+ \frac{\beta \gamma}{s(t)} |\tilde{u}(t, 0)|^2=  \int_0^1\frac{y s_t(t)}{s(t)} \tilde{u}_y(t)\tilde{u}(t) dy \mbox{ for a.e. }t\in [0, T].
\tag{3.36}
\end{align}
For the third term $I_3$ in the left-hand side of (\ref{ap-5}), by the monotonicity of $\sigma$ we have that $I_3 \geq  -a_0\alpha |\eta(t, 1)| |\tilde{u}(t, 1)|$.
%\begin{align}
%\label{ap-6}
%\end{align}
From (\ref{1d}) it holds that %we estimate the second term of the right-hand side of (\ref{ap-6}) as follows:
\begin{align*}
%\label{ap-7}
&|\eta(t, 1)|| \tilde{u}(t, 1)| \leq C^{1/2}_e |\eta(t, 1)|(|\tilde{u}_y(t)|^{1/2}_{H}|\tilde{u}(t)|^{1/2}_{H} + |\tilde{u}(t)|_{H} )
\end{align*}
and  
\begin{align}
\label{ap-8}
& \int_0^t |\eta(\tau, 1)|| \tilde{u}(\tau, 1)| d\tau \nonumber \\
\leq & C^{1/2}_e \bigg( |\tilde{u}|^{1/2}_{L^{\infty}(0, T_1; H)} \int_0^t |\eta(t, 1)| |\tilde{u}_y(\tau)|^{1/2}_{H} d\tau + |\tilde{u}|_{L^{\infty}(0, T_1; H)} \int_0^t |\eta(t, 1)| d\tau \biggr) \nonumber \\
\leq & C^{1/2}_e\biggl(|\tilde{u}|^{1/2}_{L^{\infty}(0, T_1; H)} T^{1/4}_1 \biggl (\int_0^t |\tilde{u}_y(\tau)|^2_{H} \biggr)^{1/4} + |\tilde{u}|_{L^{\infty}(0, T_1; H)} T^{1/2}_1 \biggr) \biggl (\int_0^t |\eta(t, 1)|^2\biggr)^{1/2} . 
\tag{3.37}
\end{align}
%Also, by  $\tilde{u}_i\in L^{\infty}(0, T; X)$ for $i=1$, 2 and (\ref{1d}) we note that $\tilde{u}_i(\cdot, 1)\in L^{\infty}(0, T)$ for $i=1$, 2. 
Let $T_1\in (0, T]$ and we integrate (\ref{ap-5}) over $[0, t]$ for any $t\in [0, T_1]$. Then, by (\ref{3-14-1}) and (\ref{ap-8}) we obtain 
\begin{align}
\label{ap-9}
& \delta \biggl (|\tilde{u}(t)|^2_{H} + \int_0^t |\tilde{u}_y(t)|^2_{H} \biggr) \nonumber \\
%\frac{a_0}{a}C_e|\tilde{u}_2(\cdot, 1)|_{L^{\infty}(0, T)}T^{1/2}_1(1+T^{1/2}) |\tilde{u}|^2_{V(T_1)} 
\leq & a_0\alpha C^{1/2}_e T^{1/4}_1(1+T^{1/4}_1) \biggl (\int_0^t |\eta(t, 1)|^2 \biggr)^{1/2} |\tilde{u}|_{V(T_1)} + \frac{|s_t|_{L^{\infty}(0, T_1)}}{a} T^{1/2}_1 |\tilde{u}|^2_{V(T_1)}, 
\tag{3.38}
\end{align}
where $\delta=\min \{1/2, 1/l^2 \}$, where $l=\max_{0\leq t\leq T}|s(t)|$. %Put the coefficient of $|\tilde{u}|^2_{V(T_1)}$ by $C(T)$. 
%Also, we note that it holds that 
%\begin{align}
%\label{ap-10}
%&\int_0^ t |\eta(\tau, 1)|^2 d\tau \leq C_e \int_0^t (|\eta_y(\tau)|_{H}|\eta(\tau)|_{H} + |\eta(\tau)|^2_H) dt \nonumber \\
%& \leq C_e \biggl(|\eta|_{L^{\infty}(0, T_1; H)} T^{1/2} \biggl( \int_0^t |\eta_y(\tau)|^2_H d\tau \biggr)^{1/2} + T|\eta|^2_{L^{\infty}(0, T_1; H)}  \biggr)  \nonumber \\
%& \leq C_eT^{1/2}(1+T^{1/2}) |\eta|^2_{V(T_1)}
%\end{align}
Finally, by (\ref{ap-9}) we have 
\begin{align}
\left (\delta -\frac{|s_t|_{L^{\infty}(0, T_1)}}{a}T^{1/2}_1 \right)|\tilde{u}|_{V(T_1)} \leq a_0 \alpha C^{1/2}_eT^{1/4}_1(1+T^{1/4}_1)|\eta|_{V(T_1)}.
\tag{3.39}
%\biggl (\int_0^t |\eta(\tau)|^2_{H^1(0, 1)} \biggr)^{1/2} %+ T^{1/2}_1 \biggl( \int_0^t |\eta(\tau)|^2_{H^1(0, 1)}|d\tau \biggr)^{1/2} \nonumber \\
%& + \frac{|s_t(t)|_{L^{\infty}(0, T_1)}}{s_0} T^{1/2}_1 \biggl( \int_0^t |\tilde{u}_y(\tau)|^2_{L^2(0, 1)} \biggr)^{1/2}
\end{align}
Now, we take $T^*_1$ such that $\delta -\frac{|s_t|_{L^{\infty}(0, T_1)}}{a}(T^*_1)^{1/2}=\frac{\delta}{2}$ and see that $T_1\leq T^*_1$ such that $\frac{2}{\delta} a_0 \alpha C^{1/2}_eT^{1/4}_1(1+T^{1/4}_1)<1$. Hence, Banach's fixed point theorem guarantees that 
%From this result, we infer that for some $T_1$, $\delta_{T_1}$ is a contraction mapping in $V(T_1)$. 
there exists $\tilde{u}\in V(T_1)$ such that $\delta_{T_1}(\tilde{u})=\tilde{u}$. Thus, we see that $(\mbox{AP}1)(\tilde{u}_{0}, s, b)$ has a solution $\tilde{u}$ on $[0, T_1]$. Here, $T_1$ is independent of the choice of the initial data. Therefore, %similarly to the proof of Lemma \ref{lem2}, 
by repeating the local existence argument, we have a unique solution $\tilde{u}$ of $(\mbox{AP}1)(\tilde{u}_{0}, s, b)$ on the whole interval $[0, T]$. Thus, we see that Lemma \ref{lem5} holds.
\end{proof}

Here, for given $s\in W^{1,\infty}(0, T)$ with $s>0$ on $[0, T]$ we show that a solution $\tilde{u}$ of $(\mbox{AP}1)(\tilde{u}_0, s, b)$ is non-negative and bounded on $Q(T)$. 

%%%%%%%%%%%%%%%%%%%%%%%%%%%%%%%%%%%%%%%%%%%%%%%%%%%%%%%%%%%%%%%%%%%%%%%%%%%%%%%%%%%%%%%%%%%%
%
%
% lemma 3.7// 
%
%
%%%%%%%%%%%%%%%%%%%%%%%%%%%%%%%%%%%%%%%%%%%%%%%%%%%%%%%%%%%%%%%%%%%%%%%%%%%%%%%%%%%%%%%%%%%%%
\begin{lemma}
\label{lem9}
Let $T>0$, $s\in W^{1,\infty}(0, T)$ with $s>0$ on $[0, T]$ and $\tilde{u}$ be a solution of $(\mbox{AP}1)(\tilde{u}_0, s, b)$ on $[0, T]$. Then, it holds that 
\begin{align*}
0\leq \tilde{u}(t) \leq u^*(T):=\max \{ \alpha l(T), \frac{b^*}{\gamma} \} \mbox{ on } [0, 1] \mbox{ for } t\in [0, T],
\end{align*}
where $l(T)=\max_{0\leq t \leq T}|s(t)|$.
\end{lemma}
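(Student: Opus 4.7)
The statement has two halves, a lower bound $\tilde{u}\ge 0$ and an upper bound $\tilde{u}\le u^*(T)$, and both will be established by truncation‑based energy estimates, testing the weak formulation (S'2) of $(\mbox{AP}1)(\tilde{u}_0,s,b)$ against the appropriate one‑sided part of the solution. Since $\tilde{u}\in W^{1,2}(0,T;X^*)\cap L^2(0,T;X)$, the standard chain rule applies and $t\mapsto |[\tilde{u}-c]^{\pm}(t)|_H^2$ is absolutely continuous for any constant $c$.

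For the lower bound, I would choose the test function $z=-[\tilde{u}]^{-}$ with $[\tilde{u}]^{-}=\max(-\tilde{u},0)\in V(T)$. The time term yields $\tfrac12 \tfrac{d}{dt}|[\tilde{u}]^{-}|_H^2$; the diffusion term yields $\tfrac{1}{s^2(t)}|([\tilde{u}]^{-})_y|_H^2$. At $y=1$ the contribution $\frac{a_0}{s(t)}\sigma(\tilde{u}(t,1))\bigl(\sigma(\tilde{u}(t,1))-\alpha s(t)\bigr)z(t,1)$ vanishes identically, because on $\{\tilde{u}(t,1)<0\}$ the factor $\sigma(\tilde{u}(t,1))=0$ and on $\{\tilde{u}(t,1)\ge 0\}$ the factor $z(t,1)=0$. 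At $y=0$, on $\{\tilde{u}(t,0)<0\}$ the expression $-\tfrac{1}{s(t)}\beta(b(t)-\gamma\tilde{u}(t,0))z(t,0)$ reduces, after multiplication, to $\tfrac{\beta}{s(t)}(b(t)+\gamma[\tilde{u}]^{-}(t,0))[\tilde{u}]^{-}(t,0)\ge 0$, so it may be dropped. The convective right‑hand side is handled by Young's inequality exactly as in the proofs of Lemmas \ref{lem6} and \ref{lem7}, absorbing half of the diffusion term. Together with $[\tilde{u}_0]^{-}\equiv 0$ from (A2), Gronwall's inequality forces $[\tilde{u}(t)]^{-}\equiv 0$ on $[0,T]$, i.e.\ $\tilde{u}\ge 0$.

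For the upper bound, set $u^*=u^*(T)=\max\{\alpha l(T),\,b^*/\gamma\}$, let $v=\tilde{u}-u^*$, and test with $z=[v]^{+}=\max(v,0)\in V(T)$. Since $u^*$ is constant, the $\tilde{u}_t$ and $\tilde{u}_y$ terms translate directly to $v_t$ and $v_y$, producing $\tfrac12\tfrac{d}{dt}|[v]^{+}|_H^2+\tfrac{1}{s^2(t)}|([v]^{+})_y|_H^2$ on the left. The two boundary contributions are where the specific value of $u^*$ is used. On $\{v(t,1)>0\}$ one has $\tilde{u}(t,1)>u^*\ge\alpha l(T)\ge \alpha s(t)$, so $\sigma(\tilde{u}(t,1))=\tilde{u}(t,1)$ and $\sigma(\tilde{u}(t,1))-\alpha s(t)>0$, hence the boundary term at $y=1$ is non‑negative and may be discarded. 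On $\{v(t,0)>0\}$ one has $\gamma\tilde{u}(t,0)>\gamma u^*\ge b^*\ge b(t)$, so $-\tfrac{1}{s(t)}\beta(b(t)-\gamma\tilde{u}(t,0))[v]^{+}(t,0)\ge 0$ and is likewise discarded. The convective right‑hand side, $\int_0^1 \tfrac{ys_t(t)}{s(t)}([v]^{+})_y [v]^{+}\,dy$, is absorbed by the diffusion term via Young's inequality.

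The initial condition $[v(0)]^{+}=0$ follows from the interpretation of $b^*$ in (A3), which ensures $b^*/\gamma\ge |u_0|_{L^\infty(0,s_0)}$, so that $\tilde{u}_0(y)=u_0(ys_0)\le u^*$ on $[0,1]$. Gronwall then yields $[v(t)]^{+}\equiv 0$, that is $\tilde{u}(t)\le u^*(T)$ on $[0,1]$ for every $t\in[0,T]$. The main obstacle I anticipate is justifying that the nonlinear $y=1$ boundary contribution really has the favourable sign for the upper bound; this is precisely why the definition of $u^*$ is forced to dominate $\alpha l(T)$, and the argument would collapse if $s$ were allowed to exceed $u^*/\alpha$, which later on will be used to control the maximal free‑boundary length a priori.
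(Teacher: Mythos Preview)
Your proposal is correct and follows essentially the same approach as the paper: test against $-[-\tilde u]^+$ for the lower bound and $[\tilde u-u^*(T)]^+$ for the upper bound, exploit the sign structure of the $y=0$ and $y=1$ boundary terms via the definitions of $\sigma$, $b^*$ and $u^*(T)$, absorb the convective term by Young's inequality, and close with Gronwall. The only cosmetic difference is that at $y=1$ the paper first invokes the already established nonnegativity to write $\sigma(\tilde u(t,1))=\tilde u(t,1)$ globally before estimating, whereas you argue directly on the set $\{v(t,1)>0\}$; both variants yield the same inequality.
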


\begin{proof}
By (S'2), we note that it holds that 
\begin{align}
\label{3-42}
& \langle \tilde{u}_t, z \rangle_X + \int_0^1 \frac{1}{s^2} \tilde{u}_{y} z_y dy + \frac{a_0}{s} \sigma(\tilde{u}(\cdot, 1))(\sigma(\tilde{u}(\cdot, 1))-\alpha s) z(1)\nonumber \\
&-\frac{1}{s}\beta (b(\cdot)-\gamma \tilde{u}(\cdot, 0))z(0) = \int_0^1\frac{y s_{t}}{s} \tilde{u}_{y}z dy \mbox{ for }z\in X\mbox{ a.e. on }[0, T].
\tag{3.40}
\end{align}
First, we prove that $\tilde{u}(t)\geq 0$ on $[0, 1]$ for $t\in [0, T]$. By taking $z=-[-\tilde{u}]^+$ in (\ref{3-42}) we have 
\begin{align}
\label{3-43}
& \frac{1}{2} \frac{d}{dt}|[-\tilde{u}(t)]^+|^2_{H}+ \frac{1}{s^2(t)} \int_0^1 |[-\tilde{u}(t)]^+_y|^2 dy \nonumber \\
& -\frac{a_0}{s(t)} \sigma(\tilde{u}(\cdot, 1))(\sigma(\tilde{u}(\cdot, 1))-\alpha s(t))[-\tilde{u}(t, 1)]^+ +\frac{1}{s(t)}\beta (b(t)-\gamma \tilde{u}(\cdot, 0))[-\tilde{u}(t, 0)]^+ \nonumber \\
& = -\int_0^1\frac{y s_t(t)}{s(t)} \tilde{u}_y(t)[-\tilde{u}(t)]^+dy
\mbox{ a.e. on }[0, T].
\tag{3.41}
\end{align}
Here, the third term in the left-hand side of (\ref{3-43}) is equal to 0 and the forth term in the left-hand side of (\ref{3-43}) is non-negative. 
%Also, from (1.4) and (\ref{1d}) we have that 
%\begin{align*}
%& -\frac{a_0}{s(t)}\sigma(\tilde{u}(\cdot, 1))(\sigma(\tilde{u}(\cdot, 1)-\alpha s(t))[-\tilde{u}(t, 1)]^+ \\
%& \geq -a_0 \alpha |[-\tilde{u}(t, 1)]^+|^2 \\
%& \geq -a_0 \alpha C_e (|[-\tilde{u}(t, 1)]^+_y|_{H} |[-\tilde{u}(t, 1)]^+|_{H} +|[-\tilde{u}(t, 1)]^+|^2_{H}) \\
%& \geq -\frac{1}{4s^2(t)}|[-\tilde{u}(t, 1)]^+_y|^2_{H} -( l^2(a_0 \alpha C_e)^2 + a_0\alpha C_e)|[-\tilde{u}(t, 1)]^+|^2_{H},
%\end{align*}
Also, we obtain that 
\begin{align*}
& \int_0^1\frac{y s_t(t)}{s(t)} [-\tilde{u}(t)]^+_y[-\tilde{u}(t)]^+dy  
%& \leq a_0 \alpha |[-\tilde{u}(t)]^+_y|_{H}|[-\tilde{u}(t)]^+|_{H} \\
\leq \frac{1}{2s^2(t)}|[-\tilde{u}(t)]^+_y|^2_{H} + \frac{(s_t(t))^2}{2}|[-\tilde{u}(t)]^+|^2_{H}, 
\end{align*}
Then, we have 
\begin{align*}
\frac{1}{2} \frac{d}{dt}|[-\tilde{u}(t)]^+|^2_{H}+ \frac{1}{2s^2(t)} \int_0^1 |[-\tilde{u}(t)]^+_y|^2 dy \leq \frac{(s_t(t))^2}{2}|[-\tilde{u}(t)]^+|^2_{H} \mbox{ for a.e. }t\in [0, T].
\end{align*}
Therefore, by Gronwall's inequality and the assumption that $\tilde{u}_0 \geq 0$ on $[0, 1]$, we conclude that $\tilde{u}(t) \geq 0$ on $[0, 1]$ for $t\in [0, T]$. 

Next, we show that a solution $\tilde{u}$ of $(\mbox{AP}1)(\tilde{u}_0, s, b)$ has a upper bound $u^*(T)$. Put $U(t, y)=[\tilde{u}(t, y)-u^*(T)]^+$ for $y \in [0, 1]$ and $t\in [0, T]$. Then, it holds that 
\begin{align}
\label{3-44}
& \frac{1}{2} \frac{d}{dt}|U(t)|^2_{H}+ \frac{1}{s^2(t)} \int_0^1 |U_y(t)|^2 dy + \frac{a_0}{s(t)} \sigma(\tilde{u}(\cdot, 1))(\sigma(\tilde{u}(\cdot, 1))-\alpha s(t))U(t, 1) \nonumber \\
& -\frac{1}{s(t)}\beta (b(t)-\gamma \tilde{u}(\cdot, 0))U(t, 0)= \int_0^1\frac{y s_t(t)}{s(t)} \tilde{u}_y(t)U(t) dy
\mbox{ for a.e. }t\in [0, T].
\tag{3.42}
\end{align}
Here, by $\tilde{u}(t)\geq 0$ on $[0, 1]$ for $t\in [0,T]$ we note that  $a_0\sigma(\tilde{u}(\cdot, 1))(\sigma(\tilde{u}(t, 1))-\alpha s(t))=a_0\tilde{u}(t, 1)(\tilde{u}(t, 1)-\alpha s(t))$. Then, by $u^*(T)\geq \alpha l(T) \geq \alpha s(t)$ for $t\in [0, T]$, it holds that 
\begin{align*}
\frac{a_0}{s(t)} \tilde{u}(t, 1)(\tilde{u}(t, 1)-\alpha s(t)) U(t, 1)\geq \frac{a_0}{s(t)}\tilde{u}(t, 1)(u^*(T)-\alpha s(t))U(t, 1)\geq 0.
\end{align*}
%For the third term in the left-hand side of (\ref{4-20}) we can write 
%By noting from $\sup_{r\in \mathbb{R}}\varphi(r)\leq |h|_{L^{\infty}(0, T)}H^{-1}$ in (A4) that
%\begin{align*}
%& -u_z(t, s(t))U(t, s(t)) = 
%\frac{1}{s(t)}\tilde{u}(t, 1)s_t(t) U(t, 1) & = \frac{1}{s(t)}(s_t(t) \left(\tilde{u}(t, 1)-u^* \right)U(t, 1) + s_t(t) u^* U(t, 1)) \\  
%& =\frac{s_t(t)}{s(t)}|U(t, s(t))|^2 + \frac{s_t(t)}{s(t)}u^* U(t, s(t)).
%\end{align*}
Also, by (\ref{1-3}) and $b\leq b^*$, we observe that 
\begin{align}
\label{3-45}
& -\frac{1}{s(t)}\beta(b(t)-\gamma \tilde{u}(t, 0))U(t, 0) = \frac{1}{s(t)}\beta (\gamma \tilde{u}(t, 0)-b^* + b^*-b(t))U(t, 0) \nonumber \\
\geq  & \frac{\beta \gamma}{s(t)} |U(t, 0)|^2 + \frac{\beta}{s(t)}(b^*-b(t))U(t, 0) \geq 0.
\tag{3.43}
\end{align}
By applying the above two results to (\ref{3-44}) we obtain that 
\begin{align*}
& \frac{1}{2}\frac{d}{dt}\int_0^1|U(t)|^2 dy + \frac{1}{2s^2(t)}\int_0^1 |U_y(t)|^2 dy \leq \frac{(s_t(t))^2}{2}|U(t)|^2_H \mbox{ for a.e. }t\in[0, T].
%\label{4-35}
\end{align*}
%Here, by $s_t(t)=a_0(\sigma(u(t, s(t)))-\alpha s(t))$ we notice that 
%\begin{align*}
%\frac{s_t(t)}{2}|U(t, s(t))|^2 \geq \frac{a_0(u^*-\alpha s(t))}{2}|U(t, s(t))|^2 \geq \frac{a_0(u^*-\alpha M)}{2}|U(t, s(t))|^2\geq 0.
%\end{align*}
%This implies that 
This result and the assumption that $\tilde{u}_0 \leq b^*/\gamma $ on $[0, 1]$ implies that $\tilde{u}(t) \leq u^*(T)$ on $[0, 1]$ for $t\in [0, T]$. Thus, Lemma \ref{lem9} is proven.
\end{proof}

At the end of this section, we relax the condition $s\in W^{1, \infty}(0, T)$, namely, for given $s\in W^{1,2}(0, T)$ with $s>0$ on $[0, T]$, we construct a solution to $(\mbox{AP}1)(\tilde{u}_0, s, b)$.

%%%%%%%%%%%%%%%%%%%%%%%%%%%%%%%%%%%%%%%%%%%%%%%%%%%%%%%%%%%%%%%%%%%%%%%%%%%%%%%%%%%%%%%%%%%%
%
%
% lemma 3.8// 
%
%
%%%%%%%%%%%%%%%%%%%%%%%%%%%%%%%%%%%%%%%%%%%%%%%%%%%%%%%%%%%%%%%%%%%%%%%%%%%%%%%%%%%%%%%%%%%%%
\begin{lemma}
\label{lem8}
Let $T>0$ and $s\in W^{1,2}(0, T)$ with $s>0$ on $[0, T]$. If (A1)-(A3), then, $(\mbox{AP}1)(\tilde{u}_0, s, b)$ has a unique solution $\tilde{u}$ on $[0, T]$. 
\end{lemma}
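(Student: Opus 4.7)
The plan is to approximate $s\in W^{1,2}(0,T)$ by a sequence $s_n\in W^{1,\infty}(0,T)$ (for instance by mollification in time combined with truncation of $s_{n,t}$), chosen so that $s_n\to s$ in $W^{1,2}(0,T)$ and uniformly on $[0,T]$, and such that $0<a\le s_n \le l$ on $[0,T]$ uniformly in $n$, with the same $a$ and $l$ as for $s$ (up to a small perturbation). By Lemma~\ref{lem5} the problem $(\mbox{AP}1)(\tilde{u}_0,s_n,b)$ admits a unique solution $\tilde{u}_n$ on $[0,T]$ with $\tilde{u}_n\in W^{1,2}(0,T;X^*)\cap V(T)$, and by Lemma~\ref{lem9} we have
\begin{align*}
0\le \tilde{u}_n(t,y)\le u^*(T)=\max\{\alpha l,\,b^*/\gamma\}\quad \text{on } Q(T),
\end{align*}
uniformly in $n$.

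The next step is to derive $n$-uniform estimates. Testing the equation against $\tilde{u}_n$ and arguing as in the proof of Lemma~\ref{lem6} (using the $L^\infty$ bound above to handle the nonlinear boundary term at $y=1$ and the $W^{1,2}$-regularity of $s_n$ in the convective term $ys_{n,t}\tilde{u}_{n,y}/s_n$, controlled via Young's inequality and Gronwall) yields a uniform bound on $\tilde{u}_n$ in $V(T)$. A duality estimate analogous to (\ref{3-27}), using again the uniform $L^\infty$ bound on $\tilde{u}_n$, the $L^2$-bound on $s_{n,t}$ and the uniform positivity $s_n\ge a$, then provides a uniform bound on $\tilde{u}_{n,t}$ in $L^2(0,T;X^*)$.

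Extracting a subsequence, we obtain $\tilde{u}_n\rightharpoonup \tilde{u}$ weakly in $W^{1,2}(0,T;X^*)\cap L^2(0,T;X)$ and weakly-$*$ in $L^\infty(0,T;H)$, while Aubin's compactness theorem gives $\tilde{u}_n\to\tilde{u}$ strongly in $L^2(0,T;H)$. The trace inequality (\ref{3-32}) then yields $\tilde{u}_n(\cdot,y)\to\tilde{u}(\cdot,y)$ strongly in $L^2(0,T)$ at $y=0,1$, which combined with the $L^\infty$ bound allows passage to the limit in the nonlinear boundary term $(\sigma(\tilde{u}_n(\cdot,1)))^2-\alpha\sigma(\tilde{u}_n(\cdot,1))s_n$ (the latter converging strongly in $L^2(0,T)$ since $s_n\to s$ uniformly). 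For the convective term, the uniform convergence $s_n\to s$ and lower bound $s_n\ge a$ imply $s_{n,t}/s_n\to s_t/s$ in $L^2(0,T)$; for fixed $z\in V(T)$ the factor $ys_{n,t}z/s_n$ therefore converges strongly in $L^2(Q(T))$, which together with the weak convergence of $\tilde{u}_{n,y}$ in $L^2(Q(T))$ provides the needed convergence. Passing to the limit in the weak formulation shows that $\tilde{u}$ satisfies (S'1)--(S'3) for $(\mbox{AP}1)(\tilde{u}_0,s,b)$.

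Uniqueness follows by the same argument as for Lemma~\ref{lem7}: subtracting two solutions, testing with the difference, and exploiting the monotonicity of $\sigma$ on the boundary term at $y=1$ together with the non-negativity of the boundary term at $y=0$, and then applying Gronwall's inequality with the $L^2$ control of $s_t$. The main obstacle is the passage to the limit in the convective term $\int_{Q(T)} y s_{n,t}\tilde{u}_{n,y}z/s_n\,dydt$, since $s_{n,t}$ converges only weakly in $L^2(0,T)$ in general; the above scheme circumvents this by arranging strong $W^{1,2}$-approximation of $s$, so that the product $ys_{n,t}z/s_n$ is strongly convergent and may be paired with the merely weakly convergent gradient $\tilde{u}_{n,y}$.
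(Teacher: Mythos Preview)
Your proposal is correct and follows essentially the same approach as the paper: approximate $s$ by $s_n\in W^{1,\infty}(0,T)$ with $s_n\to s$ in $W^{1,2}(0,T)$ and uniform two-sided bounds, invoke Lemma~\ref{lem5} and Lemma~\ref{lem9} for the approximate problems, derive $n$-uniform $V(T)$ and $L^2(0,T;X^*)$ estimates by testing with $\tilde u_n$ and by duality, extract a subsequence via Aubin's theorem, and pass to the limit term by term using the strong $W^{1,2}$-convergence of $s_n$ for the convective term; uniqueness is handled exactly as in Lemma~\ref{lem7}. Your identification of the convective term as the delicate point and its resolution via strong convergence of $s_{n,t}/s_n$ matches the paper's treatment.
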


\begin{proof}
For given $s\in W^{1, 2}(0, T)$ with $s>0$ on $[0, T]$, we choose a sequence $\{s_n\} \subset W^{1, \infty}(0, T)$ and $l$, $a>0$ satisfying $a \leq s_n\leq l$ on $[0, T]$ for each $n\in \mathbb{N}$, 
$s_n \to s$ in $W^{1,2}(0, T)$ as $n \to \infty$. By Lemma \ref{lem5} we can take a sequence $\{\tilde{u}_n\}$ of solutions to $(\mbox{AP}1)(\tilde{u}_0, s_n, b)$ on $[0, T]$. Let $z\in X$. Then, it holds that 
\begin{align}
\label{3-40}
& \langle \tilde{u}_{nt}, z \rangle_X + \frac{1}{s^2_{n}} \biggl(\int_0^1 \tilde{u}_{ny} z_y dy\biggr) \nonumber \\
& +  \frac{a_0}{s_{n}} \sigma(\tilde{u}_{n}(\cdot, 1)) (\sigma(\tilde{u}_{n}(\cdot, 1))-\alpha s_{n})z(1) \nonumber \\
& -\frac{1}{s_{n}}\beta(b-\gamma \tilde{u}_{n}(\cdot, 0))z(0) = \int_0^1\frac{y s_{nt}}{s_{n}} \tilde{u}_{ny}zdy 
\mbox{ a.e. on }[0, T].
\tag{3.44}
\end{align}
We take $z=\tilde{u}_n$ in (\ref{3-40}). Then, simiarly to the proof of Lemma \ref{lem6} we derive  
\begin{align*}
%\label{3-15-4}
& \frac{1}{2}\frac{d}{dt}|\tilde{u}_{n}(t)|^2_{H} + \frac{1}{4s^2_{n}(t)}\int_0^1|\tilde{u}_{ny}(\tau)|^2 dy \\
\leq & \left( |s_{nt}(t)|^2 + \frac{(\beta b^* C_e)^2}{4}+ \frac{\beta b^* C_e}{2a} \right)|\tilde{u}_{n}(t)|^2_{H} + \frac{\beta b^*}{2a} \\
& + \biggl (s^2_{n}(t) \left(a_0\alpha C_e\right)^2 + a_0\alpha C_e \biggr)|\tilde{u}_{n}(t)|^2_{H}
 \mbox{ for a.e. }t\in[0, T].
\end{align*}
From this, we infer that $\{ \tilde{u}_n \}$ is bounded in $L^{\infty}(0, T; H)\cap L^2(0, T; X)$. Also, by using Lemma \ref{lem9}, it holds that $0\leq \tilde{u}_n \leq u^*$ on $Q(T)$ for each $n\in \mathbb{N}$, where $u^*=\max \{ \alpha l, \frac{b^*}{\gamma} \}$. Using this result, the boundedness of $\{ \tilde{u}_n \}$ in $V(T)$ and (\ref{ap-5}), %(\ref{3-27}), 
we see that $\{ \tilde{u}_{nt}\}$ is bounded in $L^{2}(0, T;X^*)$. Therefore, we take a subsequence $\{ n_j \}\subset \{ n \}$ such that for some $\tilde{u}\in W^{1, 2}(0, T; X^*)\cap V(T)$, $\tilde{u}_{nj} \to \tilde{u}$ strongly in $L^2(0, T; H)$, weakly in $W^{1,2}(0, T; X^*) \cap L^2(0, T; X)$, weakly-* in $L^{\infty}(0, T;H)$, and $\tilde{u}_{nj}(\cdot, x)\to \tilde
{u}(\cdot, x)$ in $L^2(0, T)$ at $x=0$, $1$ as $j \to \infty$. 

Now, we consider the limiting process $j\to \infty$ in the following way:
\begin{align}
\label{3-41}
& \int_0^T\langle \tilde{u}_{nt}(t), z(t) \rangle_X dt + \int_0^T \frac{1}{s^2_{n}(t)} \biggl(\int_0^1 \tilde{u}_{ny}(t) z_y(t) dy\biggr)dt \nonumber \\
& +  \int_0^T \frac{a_0}{s_{n}(t)} \sigma(\tilde{u}_{n}(t, 1)) (\sigma(\tilde{u}_{n}(t, 1))-\alpha s_{n}(t))z(t, 1) dt \nonumber \\
& -\int_0^T\frac{1}{s_{n}(t)}\beta(b(t)-\gamma \tilde{u}_{n}(t, 0))z(t, 0) dt \nonumber \\
& = \int_0^T \int_0^1\frac{y s_{nt}(t)}{s_{n}(t)} \tilde{u}_{ny}(t) z(t) dy dt \mbox{ for }z\in V(T).
\tag{3.45}
\end{align}
Note that by $s_{nj}\to s$ in  $W^{1,2}(0, T)$ as $j\to \infty$, it holds that $s_{nj}\to s$ in $C([0, T])$ as $j\to \infty$. From the convergence of $\tilde{u}_{nj}$ and $s_{nj}$, (\ref{3-32}) and  $a \leq s_{nj}\leq l$ on $[0, T]$, it is clear that 
\begin{align*}
& \int_0^T\langle \tilde{u}_{njt}(t), z(t) \rangle_X dt \to \int_0^T\langle \tilde{u}_{t}(t), z(t) \rangle_X dt, \\
& \int_0^T \frac{a_0}{s_{nj}(t)} \sigma(\tilde{u}_{nj}(t, 1)) (\sigma(\tilde{u}_{nj}(t, 1))-\alpha  s_{nj}(t))z(t, 1) dt \\
& \hspace{5cm}\to \int_0^T \frac{a_0}{s(t)} \sigma(\tilde{u}(t, 1)) (\sigma(\tilde{u}(t, 1))-\alpha s(t))z(t, 1) dt
\end{align*}
and 
\begin{align*}
& \int_0^T\frac{1}{s_{nj}(t)}\beta(b(t)-\gamma \tilde{u}_{nj}(t, 0))z(t, 0) dt 
\to \int_0^T\frac{1}{s(t)}\beta(b(t)-\gamma \tilde{u}(t, 0))z(t, 0) dt \mbox{ as }j\to \infty.
\end{align*}
For the second term in the left-hand side of (\ref{3-41}), it follows that  
\begin{align*}
& \biggl| \int_0^T \frac{1}{s^2_{nj}(t)} \biggl(\int_0^1 \tilde{u}_{njy}(t) z_y(t) dy\biggr)dt -\int_0^T \frac{1}{s^2(t)} \biggl(\int_0^1 \tilde{u}_{y}(t) z_y(t) dy\biggr)dt \biggr| \\
\leq & \int_0^T \biggl | \frac{1}{s^2_{nj}(t)}- \frac{1}{s^2(t)} \biggr|  |\tilde{u}_{njy}(t)|_H |z_y(t)|_H dt \\ 
& + \biggl| \int_0^T \frac{1}{s^2(t)} \biggl(\int_0^1 (\tilde{u}_{njy}(t)- \tilde{u}_y(t)) z_y(t) dy\biggr)dt \biggr| \\
\leq & \frac{2l}{a^2}|s_{nj}-s|_{C([0, T])} |\tilde{u}_{njy}|_{L^2(0, T;H)}|z_y|_{L^2(0, T; H)} \\
& + \biggl | \int_0^T 
\biggl( \tilde{u}_{njy}(t)-\tilde{u}_y(t), \frac{1}{s^2(t)} z_y(t) \biggr)_H dt \biggr| 
\end{align*}
Hence, we observe that 
\begin{align*}
\int_0^T \frac{1}{s^2_{nj}(t)} \biggl(\int_0^1 \tilde{u}_{njy}(t) z_y(t) dy\biggr)dt \to \int_0^T \frac{1}{s^2(t)} \biggl(\int_0^1 \tilde{u}_{y}(t) z_y(t) dy\biggr)dt \mbox{ as }j\to \infty. 
\end{align*}
Also, the right-hand side of (\ref{3-41}) is as follows:
\begin{align*}
& \biggl| \int_0^T \int_0^1\frac{y s_{njt}(t)}{s_{nj}(t)} \tilde{u}_{njy}(t) z(t) dy dt -\int_0^T \int_0^1\frac{y s_{t}(t)}{s(t)} \tilde{u}_{y}(t) z(t) dy dt \biggr| \\
\leq &  \int_0^T \biggl| \frac{s_{njt}(t)}{s_{nj}(t)}-\frac{s_{t}(t)}{s(t)}\biggr| |\tilde{u}_{njy}(t)|_H |z(t)|_H dt  
+ \biggl| \int_0^T \biggl( \tilde{u}_{njy}(t)-\tilde{u}_y(t), \frac{ys_t(t)}{s(t)} z(t) \biggr)_H dt \biggr|   \\
\leq & \frac{1}{a} |s_{njt}-s_t|_{L^2(0, T)}|\tilde{u}_{njy}|_{L^2(0, T;H)}|z|_{L^{\infty}(0, T;H)} \\ 
& + \frac{1}{a^2}|s_{nj}-s|_{C([0, T])}|s_t|_{L^2(0, T)}|\tilde{u}_{njy}|_{L^2(0, T;H)}|z|_{L^{\infty}(0, T;H)} \\
& + \biggl| \int_0^T \biggl( \tilde{u}_{njy}(t)-\tilde{u}_y(t), \frac{ys_t(t)}{s(t)} z(t) \biggr)_H dt \biggr|  
\end{align*}
From this, we have that 
\begin{align*}
\int_0^T \int_0^1\frac{y s_{njt}(t)}{s_{nj}(t)} \tilde{u}_{njy}(t) z(t) dy dt \to \int_0^T \int_0^1\frac{y s_{t}(t)}{s(t)} \tilde{u}_{y}(t) z(t) dy dt 
\mbox{ as }j\to \infty. 
\end{align*}
Finally, by letting $j \to \infty$ we see that $\tilde{u}$ is a solution of $(\mbox{AP})(\tilde{u}_0, s, b)$ on $[0, T]$. Uniqueness is proved by the same argument of the proof of Lemma \ref{lem7}.
\end{proof}

%%%%%%%%%%%%%%%%%%%%%%%%%%%%%%%%%%%%%%%%%%%%%%%%%%%%%%%%%%%%%%%%%%%%%%%%%%%%%%%%%%%%%%%%%%%%
%
%
% subsection 3. 2// proof of Theorem 2.4
%
%5
%%%%%%%%%%%%%%%%%%%%%%%%%%%%%%%%%%%%%%%%%%%%%%%%%%%%%%%%%%%%%%%%%%%%%%%%%%%%%%%%%%%%%%%%%%%%%
\section{Proof of Theorem \ref{t2}}
\label{LE}
In this section, using the results obtained in Section \ref{AP}, we establish the existence of a locally-in-time solution $(\mbox{PC})(\tilde{u}_0, s_0, b)$. Throughout of this section, we assume (A1)-(A3). First, 
for $T>0$, $l>0$ and $a>0$ such that $a<s_0<l$ we set
\begin{align*}
& M(T, a, l):=\{ s\in W^{1,2}(0, T) | a \leq s \leq l \mbox{ on } [0, T], s(0)=s_0\}.
\end{align*}
Also, for given $s\in M(T, a, l)$, we define two solution mappings as follows: $\Psi : M(T, a, l) \to  W^{1,2}(0, T; X^*)\cap L^{\infty}(0, T; H)\cap  L^2(0, T; X)$ by
$\Psi (s)=\tilde{u}$, where $\tilde{u}$ is a unique solution of $(\mbox{AP}1)(\tilde{u}_0, s, b)$ on $[0, T]$ and $\Gamma_{T}: M(T, a, l) \to W^{1,2}(0, T)$ by
$\Gamma_{T}(s)=s_0 + \int_0^t a_0 (\sigma(\Psi(s)(\tau, 1)) -\alpha s(\tau)) d\tau$ for $t\in [0, T]$. Moreover, for any $K>0$ we put
\begin{align*}
& M_K(T):=\{s\in M(T, a, l) | \ |s|_{W^{1,2}(0, T)}\leq K \}.
\end{align*}

%%%%%%%%%%%%%%%%%%%%%%%%%%%%%%%%%%%%%%%%%%%%%%%%%%%%%%%%%%%%%%%%%%%%%%%%%
%
%
% Lemmas 4.1 // 
%
%
%%%%%%%%%%%%%%%%%%%%%%%%%%%%%%%%%%%%%%%%%%%%%%%%%%%%%%%%%%%%%%%%%%%%%%%%%

Now, we show that for some $T>0$, $\Gamma_{T}$ is a contraction mapping on the closed set of $M_K(T)$ for any $K>0$.
\begin{lemma}
\label{lem10}
Let $K>0$. Then, there exists a positive constant $T^*\leq T$ such that 
the mapping $\Gamma_{T^*}$ is a contraction on the closed set $M_K(T^*)$ in $W^{1,2}(0, T^*)$.
\end{lemma}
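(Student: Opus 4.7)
My plan has three main stages: verify that $\Gamma_{T^*}$ maps $M_K(T^*)$ into itself, derive a Lipschitz stability estimate for $\Psi$ with respect to $s$, and combine these to produce the contraction inequality.

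\textbf{Stage 1 (Invariance).} Fix $s\in M_K(T)$ and set $\tilde u=\Psi(s)$. By Lemma \ref{lem9}, $0\le\tilde u(t,1)\le u^*(T)=\max\{\alpha l,b^*/\gamma\}$, so
\begin{align*}
|\Gamma_T(s)(t)-s_0|\le a_0(u^*(T)+\alpha l)\,t,\qquad
|\partial_t\Gamma_T(s)|_{L^2(0,T)}\le a_0(u^*(T)+\alpha l)T^{1/2}.
\end{align*}
Choosing $T^*$ small we ensure $a\le\Gamma_{T^*}(s)\le l$ on $[0,T^*]$, $\Gamma_{T^*}(s)(0)=s_0$, and $|\Gamma_{T^*}(s)|_{W^{1,2}(0,T^*)}\le K$.

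\textbf{Stage 2 (Stability of $\Psi$).} For $s_1,s_2\in M_K(T)$ set $\tilde u_i=\Psi(s_i)$, $\bar s=s_1-s_2$, $\bar u=\tilde u_1-\tilde u_2$. Subtracting the two versions of (S'2) and testing with $z=\bar u$, I plan to exploit three structural facts. First, the monotone part
\begin{align*}
\frac{a_0}{s_1(t)}\bigl[\sigma(\tilde u_1(t,1))^2-\sigma(\tilde u_2(t,1))^2\bigr]\bar u(t,1)\ge 0,
\end{align*}
together with $\frac{\beta\gamma}{s_1(t)}|\bar u(t,0)|^2\ge 0$, can be dropped from the right-hand side. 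Second, the $\alpha$-contribution reduces to $-a_0\alpha[\sigma(\tilde u_1)-\sigma(\tilde u_2)]\bar u(\cdot,1)$, which is bounded in absolute value by $a_0\alpha|\bar u(\cdot,1)|^2$ and will be absorbed using the trace inequality (\ref{1d}) and Young's inequality against the $|\bar u_y|_H^2/s^2$ term. Third, every remaining term contains a factor of the form $1/s_1^k-1/s_2^k$ or $s_{1t}/s_1-s_{2t}/s_2$; using $a\le s_i\le l$ these are bounded by a constant times $|\bar s|$ or $|\bar s_t|$ times known $L^2$-quantities ($|\tilde u_{2y}|_H$, $|\sigma(\tilde u_2(\cdot,1))|$ etc.), all of which are uniformly controlled by Lemmas \ref{lem6}, \ref{lem9}. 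Assembling these yields, after Gronwall,
\begin{align*}
|\bar u|_{V(T)}^2 \le C(K,T)\bigl(|\bar s|_{C([0,T])}^2+|\bar s_t|_{L^2(0,T)}^2\bigr)
\le C(K,T)\,|\bar s|_{W^{1,2}(0,T)}^2,
\end{align*}
with $C(K,T)$ bounded as $T\downarrow 0$.

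\textbf{Stage 3 (Contraction).} From the definition of $\Gamma_T$,
\begin{align*}
\Gamma_T(s_1)(t)-\Gamma_T(s_2)(t)=\int_0^t a_0\bigl[\sigma(\tilde u_1(\tau,1))-\sigma(\tilde u_2(\tau,1))-\alpha\bar s(\tau)\bigr]d\tau,
\end{align*}
so by Lipschitz continuity of $\sigma$ and the trace inequality (\ref{1d}),
\begin{align*}
|\partial_t(\Gamma_T(s_1)-\Gamma_T(s_2))|_{L^2(0,T)}\le a_0 C_e^{1/2}|\bar u|_{V(T)}+a_0\alpha|\bar s|_{L^2(0,T)},
\end{align*}
and a similar pointwise estimate bounds $|\Gamma_T(s_1)-\Gamma_T(s_2)|_{C([0,T])}$ by $T^{1/2}$ times the same quantity. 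Combining with Stage 2 gives
\begin{align*}
|\Gamma_T(s_1)-\Gamma_T(s_2)|_{W^{1,2}(0,T)}\le C'(K,T)\,T^{\mu}\,|s_1-s_2|_{W^{1,2}(0,T)}
\end{align*}
for some $\mu>0$; choosing $T^*$ sufficiently small completes the proof.

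\textbf{Main obstacle.} The delicate step is Stage 2: the boundary nonlinearity is non-monotone because of the $\alpha s\sigma(\tilde u)$ piece, and the coefficients $1/s^2(t)$, $ys_t(t)/s(t)$ multiply the highest-order terms of the equation. Producing a stability estimate in which every error term carries either a factor of $T^\mu$ (so that it vanishes as $T\downarrow 0$) or can be absorbed into the coercive $|\bar u_y|_H^2/s^2$ contribution, without losing integrability because $s_t$ is only in $L^2$, is the computation that really has to be done carefully.
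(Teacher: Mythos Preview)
Your three-stage strategy is exactly the paper's, and Stage 2 is carried out with a slightly cleaner decomposition of the boundary term (you separate the monotone $\sigma^2$-part from the $\alpha$-part, whereas the paper splits into $I_1+I_2+I_3$); both lead to the same Gronwall estimate. Two points, however, need to be tightened.

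\textbf{Stage 1.} Lemma \ref{lem9} is stated for $s\in W^{1,\infty}(0,T)$, while $s\in M_K(T)\subset W^{1,2}(0,T)$. The $L^\infty$ bound you quote on $\tilde u$ does survive the passage to $s\in W^{1,2}$ (it is inherited in the limit inside the proof of Lemma \ref{lem8}), but this is not recorded as a lemma. The paper instead works with the uniform $V(T)$-bound \eqref{4-1} and controls $\int_0^t|\tilde u(\tau,1)|^2\,d\tau$ via the trace inequality; your route is shorter but you should say why Lemma \ref{lem9} still applies.

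\textbf{Stage 3.} Here there is a genuine gap. The bound you display,
\[
|\partial_t(\Gamma_T(s_1)-\Gamma_T(s_2))|_{L^2(0,T)}\le a_0 C_e^{1/2}|\bar u|_{V(T)}+a_0\alpha|\bar s|_{L^2(0,T)},
\]
combined with your Stage 2 estimate $|\bar u|_{V(T)}\le C(K,T)^{1/2}|\bar s|_{W^{1,2}}$, does \emph{not} produce the factor $T^\mu$ you assert in the next line: the constant $C(K,T)$ from Stage 2 is bounded \emph{away from zero} as $T\downarrow 0$ (in the paper's analogue \eqref{4-13} it tends to $e^0/a=1/a$), so the first term on the right is of order $|\bar s|_{W^{1,2}}$ with no small prefactor. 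The missing smallness must come from the trace step itself. Using \eqref{1d} more carefully,
\[
\int_0^T|\bar u(t,1)|^2\,dt\le C_e\,|\bar u|_{L^\infty(0,T;H)}\Bigl(T^{1/2}|\bar u_y|_{L^2(0,T;H)}+T\,|\bar u|_{L^\infty(0,T;H)}\Bigr),
\]
which yields $|\bar u(\cdot,1)|_{L^2(0,T)}\lesssim T^{1/4}|\bar u|_{V(T)}$ and hence the needed $T^{1/4}$ in front of $|\bar s|_{W^{1,2}}$. This is precisely the computation carried out in the paper's \eqref{4-14}. Once you insert this refinement, your argument closes and coincides with the paper's proof.
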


\begin{proof}
For $T>0$, $a>0$ and $l>0$ such that $a<s_0<l$, let $s\in M(T, a, l)$ and $\tilde{u}=\Psi(s)$. First, we note that it holds 
\begin{align}
\label{4-1}
& |\Psi(s)|_{W^{1,2}(0, T; X^*)} + |\Psi(s)|_{L^{\infty}(0, T;H)} + |\Psi(s)|_{L^2(0, T;X)} \leq C \mbox{ for }s\in M_K(T),
\tag{4.1}
\end{align}
where $C=C(T,\tilde{u}_0, K, l, b^*, \beta,s_0)$ is a positive constant depending on $T$, $\tilde{u}_0$, $K$, $l$, $b^*$, $\beta$ and $s_0$.

First we show that there exists $T_0\leq T$ such that $\Gamma_{T_0}:M_K(T_0)\to M_K(T_0)$ is well-defined. Let $K>0$ and $s\in M_K(T)$. By the definition of $\sigma$ and $\Psi(s)=\tilde{u}$ is a solution of $(\mbox{AP}1)(\tilde{u}_0, s, b)$, we observe that 
\begin{align}
\label{4-2}
\Gamma_{T}(s)(t) &= s_0 + \int_0^t a_0 (\sigma(\Psi(s)(\tau, 1)) -\alpha s(\tau)) d\tau \nonumber \\ 
& \geq s_0 -a_0\alpha lt\mbox{ for }t\in [0, T].
\tag{4.2}
\end{align}
Also, by (\ref{1d}) and (\ref{4-1}), it holds that
%we see that $|\tilde{u}(t, 1)| \leq \sqrt{C_e}|\tilde{u}(t)|_{X} \leq \sqrt{C_e}C$ for a.e. $t\in (0, T)$. 
%From this result and the inequality $\sigma(r)\leq r$ for $r\in \mathbb{R}$ we have that 
\begin{align}
\label{4-4}
& \Gamma_{T}(s)(t) \leq s_0 +a_0T^{1/2} (\int_0^t |\tilde{u}(\tau, 1)|^2d\tau )^{1/2} \nonumber \\
\leq & s_0 + a_0T^{1/2}(C_e |\tilde{u}|_{L^{\infty}(0, t;H)} \int_0^t |\tilde{u}(\tau)|_{X})^{1/2} \leq s_0 + a_0 T^{1/2}(T^{1/4}C^{1/2}_e C),  \nonumber \\ 
& \int_0^t |\Gamma_T(s)(\tau)|^2 d\tau \leq 2s^2_0 T + 4a^2_0 T \int_0^t (|\tilde{u}(\tau, 1)|^2+(\alpha s(t))^2) d\tau \nonumber \\
\leq & 2s^2_0T + 4a^2_0 T \biggl (C_e |\tilde{u}|_{L^{\infty}(0, t;H)}\int_0^t |\tilde{u}(\tau)|_{X} + (\alpha l)^2 T\biggr) \nonumber \\
\leq & 2s^2_0T + 4 a^2_0T (T^{1/2} C_e C^2 + (\alpha l)^2 T),
\tag{4.3}
\end{align}
and
\begin{align}
\label{4-5}
& \int_0^t |\Gamma'_T(s)(\tau)|^2d\tau 
\leq a^2_0\int_0^t|\sigma(\Psi(s)(\tau, 1))-\alpha s(\tau)|^2 d\tau 
\leq 2a^2_0(T^{1/2} C_e C^2 + (\alpha l)^2 T),
\tag{4.4}
\end{align}
where $C$ is the same positive constant as in (\ref{4-1}). 
Therefore,  by (\ref{4-2})-(\ref{4-5}) we see that there exists $T_0\leq T$ such that $\Gamma_{T_0}(s)\in M_K(T_0)$.

Next, for $s_1$ and $s_2\in M_K(T_0)$, let $\tilde{u}_1=\Psi(s_1)$ and $\tilde{u}_2=\Psi(s_2)$ and set $\tilde{u}=\tilde{u}_1-\tilde{u}_2$, $s=s_1-s_2$.
Then, it holds that
\begin{align}
\label{4-6}
& \langle \tilde{u}_t, z \rangle_X + \int_0^1 (\frac{1}{s^2_1} \tilde{u}_{1y}-\frac{1}{s^2_2} \tilde{u}_{2y})z_y dy \nonumber \\
& +  a_0 \biggl( \frac{1}{s_1} \sigma(\tilde{u}_1(\cdot, 1))(\sigma(\tilde{u}_1(\cdot, 1))-\alpha s_1)-\frac{1}{s_2}\sigma(\tilde{u}_2(\cdot , 1))(\sigma(\tilde{u}_2(\cdot, 1))-\alpha s_2) \biggr)z(1)\nonumber \\
& -\biggl (\frac{1}{s_1}\beta (b(t)-\gamma \tilde{u}_1(\cdot, 0))-\frac{1}{s_2}\beta (b(t)-\gamma \tilde{u}_2(\cdot, 0)) \biggr)z(0) \nonumber \\
& = \int_0^1(\frac{y s_{1t}}{s_1} \tilde{u}_{1y}-\frac{y s_{2t}}{s_2} \tilde{u}_{2y})z dy
\mbox{ for }z\in X\mbox{ a.e. on }[0, T].
\tag{4.5}
\end{align}
By taking $z=\tilde{u}$ in (\ref{4-6}) we have 
\begin{align}
\label{4-7}
& \frac{1}{2}\frac{d}{dt}|\tilde{u}|^2_H + \int_0^1 (\frac{1}{s^2_1} \tilde{u}_{1y}-\frac{1}{s^2_2} \tilde{u}_{2y})\tilde{u}_y dy \nonumber \\
& +  a_0 \biggl( \frac{1}{s_1} \sigma(\tilde{u}_1(\cdot, 1))(\sigma(\tilde{u}_1(\cdot, 1))-\alpha s_1))-\frac{1}{s_2}\sigma(\tilde{u}_2(\cdot, 1))(\sigma(\tilde{u}_2(\cdot, 1))-\alpha s_2) \biggr)\tilde{u}(\cdot, 1)\nonumber \\
& -\biggl (\frac{1}{s_1}\beta (b(t)-\gamma \tilde{u}_1(\cdot, 0))-\frac{1}{s_2}\beta (b(t)-\gamma \tilde{u}_2(\cdot, 0) \biggr)\tilde{u}(\cdot, 0) \nonumber \\
& = \int_0^1(\frac{y s_{1t}}{s_1} \tilde{u}_{1y}-\frac{y s_{2t}}{s_2} \tilde{u}_{2y})\tilde{u} dy
\mbox{ a.e. on }[0, T].
\tag{4.6}
\end{align}
For the second term of the left-hand side of (\ref{4-7}), we observe that 
\begin{align}
& \int_0^1 \left(\frac{1}{s^2_1(t)}\tilde{u}_{1y}(t)-\frac{1}{s^2_2(t)}\tilde{u}_{2y}(t)\right) \tilde{u}_y(t) dy \nonumber \\
= &\frac{1}{s^2_1(t)}|\tilde{u}_{y}(t)|^2_{H}+ \int_0^1 \left(\frac{1}{s^2_1(t)}-\frac{1}{s^2_2(t)}\right)\tilde{u}_{2y}(t)\tilde{u}_y (t)dy \nonumber \\
\geq & \frac{1}{s^2_1(t)}|\tilde{u}_{y}(t)|^2_{H} - \frac{2l|s(t)|}{a^3 s_1(t)}|\tilde{u}_{2y}(t)|_{H}|\tilde{u}_y(t)|_{H} \nonumber \\
\geq & \left(1-\frac{\eta}{2}\right)\frac{1}{s^2_1(t)}|\tilde{u}_{y}(t)|^2_{H} -\frac{1}{2\eta}\left(\frac{2l}{a^3}\right)^2|s(t)|^2|\tilde{u}_{2y}(t)|^2_{H},
\tag{4.7}
\end{align}
where $\eta $ is arbitrary positive number. Next, the third term in the left-hand side of (\ref{4-7}) is as follows:
\begin{align*}
&a_0\hspace{-1mm}\left(\frac{\sigma(\tilde{u}_1(t, 1))(\sigma(\tilde{u}_1(t, 1))-\alpha s_1(t))}{s_1(t)}-\frac{\sigma(\tilde{u}_2(t, 1))(\sigma(\tilde{u}_2(t, 1))-\alpha s_2(t))}{s_2(t)}\right)\hspace{-1mm}\tilde{u}(t, 1)\\
=&a_0\biggl[\frac{1}{s_1(t)}\biggl(\sigma(\tilde{u}_1(t, 1))(\sigma(\tilde{u}_1(t, 1))-\alpha s_1(t))-\sigma(\tilde{u}_2(t, 1))(\sigma(\tilde{u}_2(t, 1))-\alpha s_2(t)) \biggr) \\ 
 & + \left(\frac{1}{s_1(t)}-\frac{1}{s_2(t)}\right)\sigma(\tilde{u}_2(t, 1))(\sigma(\tilde{u}_2(t, 1))-\alpha  s_2(t)) \biggr] \tilde{u}(t, 1)\\
=& a_0\biggl[\frac{1}{s_1(t)} (\sigma(\tilde{u}_1(t, 1))-\sigma(\tilde{u}_2(t, 1))) (\sigma(\tilde{u}_1(t, 1))-\alpha s_1(t)) \biggr] \tilde{u}(t, 1)\\
& +\frac{\sigma(\tilde{u}_2(t, 1))}{s_1(t)} \biggl (\sigma(\tilde{u}_1(t, 1))-\alpha s_1(t)-(\sigma(\tilde{u}_2(t, 1))-\alpha s_2(t)) \biggr) \tilde{u}(t, 1)\\ 
& + \left(\frac{1}{s_1(t)}-\frac{1}{s_2(t)}\right)\sigma(\tilde{u}_2(t, 1))(\sigma(\tilde{u}_2(t, 1))-\alpha  s_2(t)) \tilde{u}(t, 1)\\
:= & I_{1} + I_{2} + I_{3}.
\end{align*}
By the monotonicity of $\sigma(r)$ and (\ref{1d}) we have that 
\begin{align}
I_1 \geq -\frac{a_0}{s_1(t)}(|\tilde{u}_1(t, 1)|+\alpha l)|C_e |\tilde{u}(t)|_X |\tilde{u}(t)|_H
\tag{4.8}
\end{align}
and 
\begin{align}
I_{2} & = \frac{a_0}{s_1(t)} \sigma(\tilde{u}_2(t, 1)) \biggl (\sigma(\tilde{u}_1(t, 1))-\alpha s_1(t)-(\sigma(\tilde{u}_2(t, 1))-\alpha s_2(t)) \biggr) \tilde{u}(t, 1) \nonumber \\
& \geq -\frac{a_0 |\tilde{u}_2(t, 1)|}{s_1(t)}  \alpha|s(t)||\tilde{u}(t, 1)| \nonumber \\ 
& \geq - \frac{C_e (a_0 \alpha \tilde{u}_2(t, 1))^2}{2s^2_1(t)} |\tilde{u}(t)|_{X}|\tilde{u}(t)|_{H}- \frac{1}{2}|s(t)|^2.
\tag{4.9}
\end{align}
Also, using the fact that $\sigma(r)\leq |r|$ for $r\in \mathbb{R}$ and  (\ref{1d}), we have the following estimate:
\begin{align}
%\label{4-7}
|I_{3}| & \leq  \left (\frac{|s(t)|}{s_1(t) s_2(t)} \right) a_0 \sigma(\tilde{u}_2(t, 1)) (\sigma(\tilde{u}_2(t, 1)) +\alpha l) \tilde{u}(t, 1) \nonumber \\
& \leq \frac{C_e (a_0\tilde{u}_2(t, 1))^2}{2a^2s^2_1(t)}|\tilde{u}(t)|_{X}|\tilde{u}(t)|_{H} + \frac{1}{2} \tilde{u}^2_2(t, 1)|s(t)|^2 \nonumber \\
& + \frac{C_e(a_0 \alpha l \tilde{u}_2(t, 1))^2}{2a^2s^2_1(t)} |\tilde{u}(t)|_{X}|\tilde{u}(t)|_{H} + \frac{1}{2}|s(t)|^2.
\tag{4.10}
\end{align}
Here, we put $L^{(1)}_{s_1}(t)=a_0 (|\tilde{u}_1(t, 1)|+\alpha l)C_e$ and $L^{(1)}_{s_2}(t) = C_e(a_0 \alpha \tilde{u}_2(t, 1))^2 /2 + C_e (a_0\tilde{u}_2(t, 1))^2/2a^2 + C_e(a_0 \alpha l \tilde{u}_2(t, 1))^2/2a^2$. Next, by using (\ref{1d}) and (A3), we have 
\begin{align}
& -\beta \left (\frac{1}{s_1(t)}-\frac{1}{s_2(t)} \right) b(t) \tilde{u}(t, 0) + \beta \gamma \left(\frac{1}{s_1(t)}\tilde{u}_1(t, 0)-\frac{1}{s_2(t)} \tilde{u}_2(t, 0) \right)\tilde{u}(t, 0) \nonumber \\
= & -\beta \left (\frac{1}{s_1(t)}-\frac{1}{s_2(t)} \right) b(t) \tilde{u}(t, 0) + \beta \gamma \frac{1}{s_1(t)}|\tilde{u}(t, 0)|^2 \nonumber \\
& + \beta \gamma \left(\frac{1}{s_1(t)}-\frac{1}{s_2(t)}\right)\tilde{u}_2(t, 0)\tilde{u}(t, 0) \nonumber \\
\geq & - \left (\frac{(\beta b^*)^2C_e}{2a^2s^2_1(t)} + \frac{(\beta \gamma |\tilde{u}_2(t, 0)|)^2 C_e}{2a^2 s^2_1(t)} \right)|\tilde{u}(t)|_{X}|\tilde{u}(t)|_{H} -|s(t)|^2 \mbox{ for }t\in [0, T_0].
%\label{4-8}
\tag{4.11}
\end{align}
For the right-hand side of (\ref{4-7}), we separate as follows:
\begin{align*}
& \int_0^1 \left(\frac{y s_{1t}(t)}{s_1(t)} \tilde{u}_{1y}(t)-\frac{y s_{2t}(t)}{s_2(t)} \tilde{u}_{2y}(t)\right)\tilde{u}(t)dy\\
=& \int_0^1 \frac{y s_{1t}(t)}{s_1(t)} \tilde{u}_y(t)\tilde{u}(t)dy+\int_0^1 \frac{ys_t(t)}{s_1(t)} \tilde{u}_{2y}(t)\tilde{u}(t)dy \\
& +\int_0^1 \left(\frac{1}{s_1(t)} -\frac{1}{s_2(t)} \right)ys_{2t}(t)\tilde{u}_{2y}(t)\tilde{u}(t)dy \\
:=& I_{4}+I_{5} + I_{6}.
\end{align*}
Then, the three terms are estimated in the following way:
\begin{align*}
I_{4} &\leq \frac{\eta }{2s^2_1(t)}|\tilde{u}_y(t)|^2_{H} + \frac{1}{2\eta }|s_{1t}(t)|^2|\tilde{u}(t)|^2_{H}, \\
I_{5} &\leq \frac{1}{2a}\biggl(|s_t(t)|^2+|\tilde{u}_{2y}(t)|^2_{H}|\tilde{u}(t)|^2_{H}\biggr), \\
I_{6} &\leq \frac{1}{2a^2}\biggl(|s(t)|^2|\tilde{u}_{2y}(t)|^2_{H}+|s_{2t}(t)|^2|\tilde{u}(t)|^2_{H}\biggr).
\end{align*}
%where $\eta$ is an arbitrary positive constant; here $C_6$, $C_7$, and $C_8$ are positive constants depending on the choice of $\eta$.
From (\ref{4-7}) and all estiamtes we derive the following inequality: 
\begin{align}
\label{4-9}
& \frac{1}{2}\frac{d}{dt}|\tilde{u}(t)|^2_{H}+(1-\eta)\frac{1}{s^2_1(t)}|\tilde{u}_y(t)|^2_{H} \nonumber \\
\leq & \frac{1}{s_1(t)} L^{(1)}_{s_1}(t) |\tilde{u}(t)|_{X}|\tilde{u}(t)|_{H} \nonumber \\
& + \frac{1}{s^2_1(t)}\biggl(L^{(1)}_{s_2}(t) + \frac{(\beta \gamma |\tilde{u}_2(t, 0)|)^2C_e}{2a^2}+\frac{(\beta b^*)^2C_e}{2a^2} \biggr) |\tilde{u}(t)|_{X}|\tilde{u}(t)|_{H} \nonumber \\
& + \left(\frac{1}{2\eta }|s_{1t}(t)|^2 +  \frac{1}{2a}|\tilde{u}_{2y}(t)|^2_{H} + \frac{1}{2a^2}|s_{2t}(t)|^2 \right)|\tilde{u}(t)|^2_{H} \nonumber \\
& + \left( \frac{1}{2a^2}|\tilde{u}_{2y}(t)|^2_{H} + \frac{1}{2\eta}\left(\frac{2l}{a^3}\right)^2|\tilde{u}_{2y}(t)|^2_{H} + \frac{1}{2}\tilde{u}^2_2(t, 1)+2\right)|s(t)|^2 + \frac{1}{2a}|s_t(t)|^2.
\tag{4.12}
\end{align}
We put $C_5(t)=((\beta \gamma |\tilde{u}_2(t, 0)|)^2C_e)/2a^2 + ((\beta b^*)^2C_e)/2a^2$. By Young's inequality we have that 
%Here, by (\ref{1d}) and (\ref{4-1}), it holds that
%\begin{align}
%|\tilde{u}_i(t, 0)|^2 \leq C_e (|\tilde{u}_{iy}(t)|_{H}|\tilde{u}_i(t)|_{H} + |\tilde{u}_i(t)|^2_{H})\leq 2C_eC^2 \mbox{ for }t\in [0, T_0],
%\label{4-10}
%\end{align}
%where $C$ is the same constant as in (\ref{4-1}). Then, by (\ref{4-10}) we note that $\{ L^{(1)}_{s_2} | s_2\in M_k(T)\}$ is bounded in %$L^{\infty}(0, T_0)$. Also, by putting $C_5=(\beta (b^*+2\gamma C_eC^2))^2C_e$ and Young's inequality we have that 
\begin{align*}
&  \frac{1}{s_1(t)} L^{(1)}_{s_1}(t)|\tilde{u}(t)|_{X}|\tilde{u}(t)|_{H}\\
\leq &  \frac{1}{s_1(t)} L^{(1)}_{s_1}(t) \biggl(|\tilde{u}_y(t)|_{H}|\tilde{u}(t)|_{H} + |\tilde{u}(t)|^2_{H}\biggr)\\
\leq & \frac{\eta}{2s^2_1(t)}|\tilde{u}_y(t)|^2_{H} + \biggl ( \frac{(L^{(1)}_{s_1}(t))^2}{2\eta} +  \frac{1}{a}L^{(1)}_{s_1}(t)\biggr)|\tilde{u}(t)|^2_{H},
\end{align*}
and
\begin{align*}
& (L^{(1)}_{s_2}(t) + C_5(t))\frac{1}{s^2_1(t)}|\tilde{u}(t)|_{X}|\tilde{u}(t)|_{H} \\
\leq & (L^{(1)}_{s_2}(t) +C_5(t))\frac{1}{s^2_1(t)}(|\tilde{u}_y(t)|_{H}|\tilde{u}(t)|_{H} + |\tilde{u}(t)|^2_{H})\\
\leq & \frac{1}{s^2_1(t)}\frac{\eta }{2}|\tilde{u}_y(t)|^2_{H} + \frac{1}{a^2} \left(\frac{(L^{(1)}_{s_2}(t) +C_5(t))^2}{2\eta } + (L^{(1)}_{s_2}(t) +C_5(t))\right)|\tilde{u}(t)|^2_{H}. 
\end{align*}
Hence, by applying these results to (\ref{4-9}) and taking a suitable $\eta=\eta_0$, we obtain
\begin{align}
\label{4-11}
& \frac{1}{2}\frac{d}{dt}|\tilde{u}(t)|^2_{H}+\frac{1}{2}\frac{1}{s^2_1(t)}|\tilde{u}_y(t)|^2_{H} \nonumber \\
\leq &  \biggl ( \frac{(L^{(1)}_{s_1}(t))^2}{2\eta_0} +  \frac{1}{a}L^{(1)}_{s_1}(t)\biggr)|\tilde{u}(t)|^2_{H} \nonumber \\
& + \frac{1}{a^2} \left(\frac{(L^{(1)}_{s_2}(t) +C_5(t))^2}{2\eta_0 } + (L^{(1)}_{s_2}(t) +C_5(t))\right)|\tilde{u}(t)|^2_{H}. \nonumber \\
& + \left(\frac{1}{2\eta_0 }|s_{1t}(t)|^2 +  \frac{1}{2a}|\tilde{u}_{2y}(t)|^2_{H} + \frac{1}{2a^2}|s_{2t}(t)|^2 \right)|\tilde{u}(t)|^2_{H} \nonumber \\
& + \left(\frac{1}{2a^2}|\tilde{u}_{2y}(t)|^2_H + \frac{1}{2\eta_0}\left(\frac{2l}{a^3}\right)^2|\tilde{u}_{2y}(t)|^2_H+ \frac{1}{2}\tilde{u}^2_2(t, 1)+2\right)|s(t)|^2 + \frac{1}{2a}|s_t(t)|^2.
\tag{4.13}
\end{align}
Now, we put the summation of all coefficients of $|\tilde{u}(t)|^2_{H}$ by $L^{(2)}_{s}(t)$ for $t\in [0, T_0]$ and 
$L^{(3)}_{s_2}(t)=|\tilde{u}_{2y}(t)|^2_{H}/2a^2 + (4l^2|\tilde{u}_{2y}(t)|^2_{H})/2\eta_0a^6+\tilde{u}^2_2(t, 1)/2+2$. Then, we have
\begin{align}
\label{4-12} 
& \frac{1}{2}\frac{d}{dt}|\tilde{u}(t)|^2_{H}+\frac{1}{2}\frac{1}{s^2_1(t)}|\tilde{u}_y(\tau)|^2_{H} \nonumber \\
\leq & L^{(2)}_{s}(t)|\tilde{u}(t)|^2_{H} + L^{(3)}_{s_2}(t)|s(t)|^2 + \frac{1}{2a}|s_t(t)|^2 \mbox{ for }t\in [0, T_0].
\tag{4.14}
\end{align}
By (\ref{1d}) and (\ref{4-1}) we note that $\tilde{u}^2_i(\cdot, 1), \tilde{u}^4_i(\cdot, 1)\in L^1(0, T_0)$ for $i=1, 2$. From this  and the fact that $s_i\in M_K(T_0)$ for $i=1, 2$, we see that $L^{(2)}_s\in L^1(0, T_0)$ and $L^{(3)}_{s_2}\in L^1(0, T_0)$.
Also, it holds that 
\begin{align*}
L^{(3)}_{s_2}(t)|s(t)|^2 \leq L^{(3)}_{s_2}(t) T_0 |s_t|^2_{L^2(0, t)} \mbox{ for }t\in [0, T_0].
\end{align*}
Therefore, Gronwall's inequality guarantees that
\begin{align}
\label{4-13}
& \frac{1}{2}|\tilde{u}(t)|^2_{H}+\frac{1}{2}\frac{1}{l^2}\int_0^t|\tilde{u}_y(\tau)|^2_{H} d\tau \nonumber \\
\leq & \left[ \left(2|L^{(3)}_{s_2}|_{L^1(0, T_0)}T_0+\frac{1}{a} \right)|s_t|^2_{L^{2}(0, t)} \right]  e^{2\int_0^t L^{(2)}_{s}(\tau)d\tau} \mbox{ for }t\in [0, T_0].
\tag{4.15}
\end{align}
By using (\ref{4-13}) we show that there exists $T^*\leq T_0$ such that $\Gamma_{T^*}$ is a contraction mapping on the closed subset of $M_K(T^*)$.
To do so, from the subtraction of the time derivatives of $\Gamma_{T_0}(s_1)$ and $\Gamma_{T_0}(s_2)$ and relying on (\ref{1d}) and (\ref{4-13}), we have for $T_1\leq T_0$ the following estimate:
\begin{align}
\label{4-14}
& |(\Gamma_{T_1}(s_1))_t-(\Gamma_{T_1}(s_2))_t|_{L^2(0, T_1)} \nonumber \\
\leq & a_0 \biggl(|\sigma(\tilde{u}_1(\cdot, 1))-\sigma(\tilde{u}_2(\cdot, 1))|_{L^2(0, T_1)} + \alpha |s_1(t)-s_2(t)|_{L^2(0, T_1)}\biggr) \nonumber \\
\leq & a_0\sqrt{C_e}\biggl(\int_0^{T_1}(|\tilde{u}_y(t)|_{H} |\tilde{u}(t)|_H + |\tilde{u}(t)|^2_{H})dt \biggr)^{1/2} + a_0 \alpha T_1 |s|_{W^{1,2}(0, T_1)}\nonumber \\
\leq & a_0 \sqrt{C_e} \left(|\tilde{u}|^{\frac{1}{2}}_{L^{\infty}(0, T_1;H)}\left(\int_0^{T_1}|\tilde{u}_y(t)|_{H} dt \right)^{\frac{1}{2}} + \sqrt{T_1} |\tilde{u}|_{L^{\infty}(0, T_1; H)} \right) \nonumber \\
& + a_0 \alpha T_1 |s|_{W^{1,2}(0, T_1)}.
\tag{4.16}
\end{align}
Using (\ref{4-13}) and (\ref{4-14}), we obtain
\begin{align}
\label{4-14-1}
& |\Gamma_{T_1}(s_1)-\Gamma_{T_1}(s_2)|_{L^2(0, T_1)} \nonumber \\
\leq & T_1C_6 \biggl (T^{\frac{1}{4}}_1|s|_{W^{1,2}(0, T_1)}+ \sqrt{T_1}|s|_{W^{1,2}(0, T_1)} + T_1|s|_{W^{1,2}(0, T_1)}\biggr),
% \leq T^{1/4} |s_t|_{L^4(0, T_1)} + T^{1/2} |s_t|_{L^4(0, T_1)} + T_1 |s_t|_{L^{4}(0, T_1)}
\tag{4.17}
\end{align}
where $C_6$ is a positive constant obtained by (\ref{4-13}). Therefore, by (\ref{4-14}) and (\ref{4-14-1}) we see that there exists $T^*\leq T_0$ such that $\Gamma_{T^*}$ is a contraction mapping on a closed subset of $M_K(T^*)$.
\end{proof}

From Lemma \ref{lem10}, by applying Banach's fixed point theorem, there exists $s\in M_K(T^*)$, where $T^*$ is the same as in Lemma \ref{lem10} such that
$\Gamma_{T^*}(s)=s$. This implies that $(\mbox{PC})(\tilde{u}_0, s_0, b)$ has a unique solution $(s, \tilde{u})$ on $[0, T^*]$. 
%Note that by (2.5), $s_t(t) \geq 0$ for $t\in [0, T^*]$. Thus, we can prove Theorem \ref{t2}. Moreover, this shows that 
%by the change of variables (\ref{2-7})
%a pair of the function $(s, u)$ is a solution of $(\mbox{P})(u_0, s_0, b)$ on $[0, T^*]$.

At the end of this section, we show the boundedness of  a solution $\tilde{u}$ to $(\mbox{PC})(\tilde{u}_0, s_0, b)$ which completes Theorem \ref{t2}. 
By (S2) it holds that 
\begin{align}
\label{4-15-0}
& \langle \tilde{u}_t, z \rangle_X + \int_0^1 \frac{1}{s^2} \tilde{u}_{y} z_y dy + \frac{1}{s} \sigma(\tilde{u}(\cdot, 1))s_t z(1)\nonumber \\
&-\frac{1}{s}\beta (b(\cdot)-\gamma \tilde{u}(\cdot, 0))z(0) = \int_0^1\frac{y s_{t}}{s} \tilde{u}_{y}z dy \mbox{ for }z\in X\mbox{ a.e. on }[0, T].
\tag{4.18}
\end{align}

First, we note that the solution $\tilde{u}$ of $(\mbox{PC})(\tilde{u}_0, s_0, b)$ is non-negative. 
%This result is proved by the same argument of the proof of Lemma \ref{lem9} so that we omit the proof. 
%Next, 
%that the free boundary $s$ has a positive lower bound of and $u$ is non-negative. %which completes Theorem \ref{t1}.
%%%%%%%%%%%%%%%%%%%%%%%%%%%%%%%%%%%%%%%%%%%%%%%%%%%%%%%%%%%%%%%%%%%%%%%%%%%%%%
%
%
% Lemma 4.2 //  
%
%
%%%%%%%%%%%%%%%%%%%%%%%%%%%%%%%%%%%%%%%%%%%%%%%%%%%%%%%%%%%%%%%%%%%%%%%%%%%%%%%%
%\begin{comment}
\begin{lemma}
\label{lem11}
Let $T>0$ and $(s, \tilde{u})$ be a solution of $(\mbox{PC})(\tilde{u}_0, s_0, b)$ on $[0, T]$. Then, $\tilde{u}(t)\geq 0$ on $[0, 1]$ for $t\in [0, T]$.
%there exists a positive constant $s_*(T)$ depending on $T$ and a non-decreasing function $M(T)$ of $T$ such that 
%\begin{align*}
%(i) & \quad s(t) \geq s_*(T) \mbox{ for }t\in [0, T], \\
%(ii) & \quad u(t) \geq 0 \mbox{ for }t\in [0, T],
%\quad |u(t)|^2_{L^2(0, s(t))} + \int_0^{t} |u_z(\tau)|^2_{L^2(0, s(\tau)} + \int_0^t |b(\tau)-\gamma u(\tau, 0)|^2 d\tau \leq M(T) \mbox{ for }t\in [0, T]. 
%\end{align*}
%where $s_*(T)$ is a positive constant depending on $T$. 
\end{lemma}

\begin{proof}
We use the same argument of the proof of Lemma \ref{lem9}. 
%First, we show (i). By (1.4) and the definition of $\sigma$, we see that 
%\begin{align}
%s_t(t) + a_0 \alpha s(t)=a_0\sigma(u(t, s(t))) \geq 0 \mbox{ for a.e. }t\in [0, T]. 
%\label{4-15-0}
%\end{align}
%This implies that 
%\begin{align*}
%\left (s(t) e^{a_0\alpha t} \right)_t= e^{a_0\alpha t}(s_t(t) + a_0\alpha s(t)) \geq 0 \mbox{ for a.e. }t\in [0, T].
%\end{align*}
%Therefore, by integrating over $[0, t]$ for any $t\in [0, T]$ we have $s(t) \geq s_0 e^{-a_0\alpha t} \geq s_0 e^{-a_0\alpha T}$ for $t\in [0, T]%$. Thus, $s_*(T)=s_0 e^{-a_0\alpha T}$ is a desired one. 
%Next, we show that $u(t) \geq 0$ on $[0, s(t)]$ for $t\in [0, T]$. 
Indeed, by (\ref{4-15-0}) we have %By taking $z=-[-\tilde{u}]^+$ in (\ref{4-15-0}) we have 
\begin{align}
\label{4-15}
& \frac{1}{2} \frac{d}{dt}|[-\tilde{u}(t)]^+|^2_{H}+ \frac{1}{s^2(t)} \int_0^1 |[-\tilde{u}(t)]^+_y|^2 dy -\frac{a_0}{s(t)} \sigma(\tilde{u}(\cdot, 1))s_t(t)[-\tilde{u}(t, 1)]^+\nonumber \\
& +\frac{1}{s(t)}\beta (b(t)-\gamma \tilde{u}(\cdot, 0))[-\tilde{u}(t, 0)]^+= -\int_0^1\frac{y s_t(t)}{s(t)} \tilde{u}_y(t)[-\tilde{u}(t)]^+dy
\mbox{ for a.e. }t\in [0, T].
\tag{4.19}
\end{align}
Here, the third term in the left-hand side of (\ref{4-15}) is equal to 0 and the forth term in the left-hand side of (\ref{4-15}) is non-negative. 
%Also, from (1.4) and (\ref{1d}) we have that 
%\begin{align*}
%& -\frac{a_0}{s(t)}\sigma(\tilde{u}(\cdot, 1))(\sigma(\tilde{u}(\cdot, 1)-\alpha s(t))[-\tilde{u}(t, 1)]^+ \\
%& \geq -a_0 \alpha |[-\tilde{u}(t, 1)]^+|^2 \\
%& \geq -a_0 \alpha C_e (|[-\tilde{u}(t, 1)]^+_y|_{H} |[-\tilde{u}(t, 1)]^+|_{H} +|[-\tilde{u}(t, 1)]^+|^2_{H}) \\
%& \geq -\frac{1}{4s^2(t)}|[-\tilde{u}(t, 1)]^+_y|^2_{H} -( l^2(a_0 \alpha C_e)^2 + a_0\alpha C_e)|[-\tilde{u}(t, 1)]^+|^2_{H},
%\end{align*}
Also, by $s_t(t)=a_0(\sigma(\tilde{u}(t, 1))-\alpha s(t))$ we note  that 
\begin{align*}
& \int_0^1\frac{y s_t(t)}{s(t)} [-\tilde{u}(t)]^+_y [-\tilde{u}(t)]^+dy = \frac{s_t(t)}{s(t)}\int_0^1 \frac{1}{2} \biggl( \frac{d}{dy} (y|[-\tilde{u}(t)]^+|^2)-|[-\tilde{u}(t)]^+|^2 \biggr) \\
% \frac{y}{s(t)} (-a_0 \alpha s(t)) [-\tilde{u}(t)]^+_y[-\tilde{u}(t)]^+dy \\ 
%& \leq a_0 \alpha |[-\tilde{u}(t)]^+_y|_{H}|[-\tilde{u}(t)]^+|_{H} \\
& = \frac{s_t(t)}{2s(t)} |[-\tilde{u}(t, 1)]^+|^2- \frac{s_t(t)}{2s(t)} |[-\tilde{u}(t)]^+|^2_H \\
& = \frac{a_0(\sigma(\tilde{u}(t, 1))-\alpha s(t))}{2s(t)} |[-\tilde{u}(t, 1)]^+|^2- \frac{a_0(\sigma(\tilde{u}(t, 1))-\alpha s(t))}{2s(t)} |[-\tilde{u}(t)]^+|^2_H \\
& \leq \frac{a_0\alpha}{2}|[-\tilde{u}(t)]^+|^2_H. %\frac{1}{2s^2(t)}|[-\tilde{u}(t)]^+_y|^2_{H} + \frac{(a_0\alpha l(T))^2}{2}|[-\tilde{u}(t)]^+|^2_{H}, 
\end{align*}
%where $l(T)=\max_{0\leq t\leq T}s(t)$.
Then, from (\ref{4-15}) we have 
\begin{align*}
\frac{1}{2} \frac{d}{dt} |[-\tilde{u}(t)]^+|^2_{H} + \frac{1}{2s(t)} \int_0^1 |[-\tilde{u}(t)]^+_y|^2 dy \leq \frac{a_0\alpha}{2}|[-\tilde{u}(t)]^+|^2_H \mbox{ for a.e. }t\in [0, T].
\end{align*}
Therefore, by Gronwall's inequality and the fact that $\tilde{u}_0 \geq 0$ on $[0, 1]$, we conclude that $\tilde{u}(t) \geq 0$ on $[0, 1]$ for $t\in [0, T]$. 
\end{proof}
%\end{comment}

Next, we show the boundedness of the solution $(s, \tilde{u})$ of $ (\mbox{PC})(\tilde{u}_0, s_0, b)$.
%%%%%%%%%%%%%%%%%%%%%%%%%%%%%%%%%%%%%%%%%%%%%%%%%%%%%%%%%%%%%%%%%%%%%%%%%%%%%%%
%
%
% Lemma 4.3 // 
%
%
%%%%%%%%%%%%%%%%%%%%%%%%%%%%%%%%%%%%%%%%%%%%%%%%%%%%%%%%%%%%%%%%%%%%%%%%%%%%%%%

\begin{lemma}
\label{lem12}
Let $T>0$ and $(s, \tilde{u})$ be a solution of $(\mbox{PC})(\tilde{u}_0, s_0, b)$ on $[0, T]$. Then, it holds that 
\begin{align*}
(i) & \quad s(t) \leq M \mbox{ for }t\in [0, T], \\ %+ \int_0^t |\tilde{u}_y(t)|^2_H dy \leq M \mbox{ for }t\in [0, T], \\ 
(ii) & \quad 0\leq \tilde{u}(t) \leq u^*:=\max \{ \alpha M, \frac{b^*}{\gamma} \} \mbox{ on } [0, 1] \mbox{ for } t\in [0, T],
%where $l(T)=\max_{0\leq t \leq T}|s(t)|$
%\begin{align*}
%(i) & \quad u(t) \geq 0 \mbox{ on } [0, s(t)] \mbox{ for } t\in [0, T], \\
%(ii) & \quad s(t)\leq M_2 \mbox{ for } t\in [0, T], 
\end{align*}
where $M$ is a positive constant which depends on $\beta$, $\gamma$, $\alpha$, $b^*$, $|\tilde{u}_0|_{H}$, $s_0$, $|b_t|_{L^2(0, T)}$ and $|b_t|_{L^1(0, T)}$. 
\end{lemma}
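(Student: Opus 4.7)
My plan is to prove (i) and (ii) jointly via a bootstrap between the ODE for $s$ and a maximum principle for $\tilde{u}$, in the spirit of Lemma \ref{lem9}. Set $M := \max\{s_0, b^*/(\alpha\gamma)\}$ and $u^* := \max\{\alpha M, b^*/\gamma\}$; by the definition of $M$ one has $u^* = \alpha M$. Because assumption (A3) encodes $b^* \geq \gamma|u_0|_{L^\infty(0, s_0)}$, we have $\tilde{u}_0 \leq b^*/\gamma \leq u^*$ on $[0, 1]$, and also $\alpha s_0 \leq \alpha M = u^*$. The goal is to propagate both $s(t) \leq M$ and $\tilde{u}(t, y) \leq u^*$ to all of $[0, T]$.

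Introduce $T' := \sup\{t \in [0, T] : s(\tau) \leq M \text{ for all } \tau \in [0, t]\}$. Continuity of $s$ and $s(0) = s_0 \leq M$ give $T' > 0$. On $[0, T']$, I would mimic the argument of Lemma \ref{lem9}: test (\ref{4-15-0}) with $U := [\tilde{u} - u^*]^+$, using $\tilde{u} \geq 0$ from Lemma \ref{lem11} so that $\sigma(\tilde{u}) = \tilde{u}$. The boundary contribution at $y = 1$, which equals $\tfrac{1}{s(t)}\sigma(\tilde{u}(t,1))s_t(t) U(t,1) = \tfrac{a_0}{s(t)}\tilde{u}(t,1)(\tilde{u}(t,1)-\alpha s(t))U(t,1)$, is non-negative on $\{U(t, 1) > 0\}$ because there $\tilde{u}(t, 1) > u^* = \alpha M \geq \alpha s(t)$. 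The boundary contribution at $y = 0$ is non-negative because $u^* \geq b^*/\gamma \geq b(t)/\gamma$ forces $\beta(b(t) - \gamma\tilde{u}(t, 0)) \leq 0$ on $\{U(t, 0) > 0\}$. The convective right-hand side $\int_0^1 (y s_t/s)\tilde{u}_y U\,dy$ is absorbed partially into the diffusion via Young's inequality. Gronwall and $U(0, \cdot) \equiv 0$ then force $\tilde{u}(t, y) \leq u^*$ on $[0, T'] \times [0, 1]$.

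With the pointwise bound $\tilde{u}(t, 1) \leq u^* = \alpha M$ in hand on $[0, T']$, the equation $s_t = a_0(\sigma(\tilde{u}(t, 1)) - \alpha s) \leq a_0\alpha(M - s)$ yields by scalar ODE comparison $s(t) \leq \max\{s_0, M\} = M$ on $[0, T']$. If $T' < T$, continuity of $s$ would let this bound persist slightly past $T'$, contradicting maximality of $T'$. Hence $T' = T$, which gives (i), and the already established $\tilde{u}(t, y) \leq u^*$, combined with Lemma \ref{lem11}, gives (ii).

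The main obstacle is the delicate continuity step at $t = T'$: if $s(T') = M$ and $\tilde{u}(T', 1) = \alpha M$, one only has $s_t(T') = 0$, so the ODE comparison does not immediately rule out a tangential crossing. The standard remedy is to run the argument with $M$ replaced by $M + \varepsilon$ and $u^*$ by $\alpha(M + \varepsilon)$, so that the maximum principle and ODE comparison both hold with strict inequalities, and then let $\varepsilon \downarrow 0$. This yields the explicit constants $M = \max\{s_0, b^*/(\alpha\gamma)\}$ and $u^* = \max\{\alpha M, b^*/\gamma\}$, depending only on $\alpha$, $\gamma$, $b^*$ and $s_0$ (a fortiori on the additional quantities listed in the statement).
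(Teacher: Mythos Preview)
Your proposal is correct and takes a genuinely different route from the paper's proof.

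The paper proves (i) by an energy method: it tests the weak formulation \eqref{4-15-0} with $z = s(\tilde{u} - b/\gamma)$, then uses the ODE $s_t = a_0(\tilde u(t,1)-\alpha s)$ to extract from the boundary term at $y=1$ the quantity $\tfrac{\alpha^2}{6}\tfrac{d}{dt}s^3(t)$ (see \eqref{4-24}--\eqref{4-25}). After integration this yields an inequality for $s^3(t)$ in terms of $\int s^k |b_t|^p\,d\tau$-type quantities, which is then closed by repeated Young and Gronwall arguments (\eqref{4-28}--\eqref{4-33}). This is why the paper's constant $M$ must carry the dependence on $|b_t|_{L^1(0,T)}$ and $|b_t|_{L^2(0,T)}$. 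Part (ii) is then deduced from (i) by the maximum-principle argument with $U=[\tilde u - u^*]^+$, much as you outline.

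Your argument bypasses the energy step entirely: you couple the maximum principle for $\tilde u$ with the scalar ODE comparison for $s$ through a continuity (bootstrap) argument. This is shorter, avoids the cubic Gronwall machinery, and produces the explicit and strictly smaller constant $M=\max\{s_0,\,b^*/(\alpha\gamma)\}$, which in particular does not depend on $|b_t|$ at all. Your treatment of the potential tangential contact $s(T')=M$ via the $\varepsilon$-perturbation is the standard and correct remedy: with $M$ replaced by $M+\varepsilon$ one gets the strict inequality $s(t)\le M+\varepsilon-(M+\varepsilon-s_0)e^{-a_0\alpha t}<M+\varepsilon$ from the ODE, which rules out $T'_\varepsilon<T$, and then one lets $\varepsilon\downarrow 0$. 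The trade-off is that the paper's energy identity is a self-contained a priori estimate that does not rely on first knowing $\tilde u\ge 0$ or on a pointwise comparison for $\tilde u$, whereas your approach hinges on Lemma~\ref{lem11} and on the maximum principle being available; in the present setting both are, so your argument is preferable.
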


\begin{proof}
First, we prove (i). By taking $z=s(\tilde{u}-\frac{b}{\gamma})$ in (\ref{4-15-0}) it holds that  
\begin{align}
\label{4-19}
& \frac{s(t)}{2} \frac{d}{dt}\biggl |\tilde{u}(t)-\frac{b(t)}{\gamma} \biggr|^2_{H} + \biggl (\frac{b_t(t)}{\gamma}, s(t)(\tilde{u}(t)-\frac{b(t)}{\gamma}) \biggr)_H \nonumber \\
& + \frac{1}{s(t)} \int_0^1 |\tilde{u}_y(t)|^2 dy + \sigma(\tilde{u}(t, 1))s_t(t) \biggl (\tilde{u}(t, 1)-\frac{b(t)}{\gamma} \biggr) \nonumber \\
& - \beta (b(t)-\gamma \tilde{u}(t, 0)) \biggl (\tilde{u}(t, 0)-\frac{b(t)}{\gamma} \biggr)= \int_0^1y s_t(t) \tilde{u}_y(t) \biggl (\tilde{u}(t)-\frac{b(t)}{\gamma} \biggr) dy \nonumber \\
& \mbox{ for a.e. }t\in [0, T].
\tag{4.20}
\end{align}
%From (1.1) and the boundary condition (1.2) and (1.3), we have 
%\begin{align}
%& \frac{1}{2} \frac{d}{dt} |u(t)|^2_{L^2(0, s(t))} + \frac{s_t(t)}{2}|u(t, s(t))|^2 + \int_0^{s(t)} |u_z(t)|^2 dz \nonumber \\
%& - \beta(b(t)-\gamma u(t, 0)) u(t, 0)=0 \mbox{ for a.e. }t\in [0, T].
%\label{4-19-1}
%\end{align}
The second term of the left-hand side of (\ref{4-19}) is follows:
\begin{align}
\label{4-20}
& \biggl | \biggl (\frac{b_t(t)}{\gamma}, s(t)(\tilde{u}(t)-\frac{b(t)}{\gamma}) \biggr)_H \biggr| = \biggl | \frac{b_t(t)}{\gamma} s(t) \int_0^1(\tilde{u}(t, y)-\tilde{u}(t, 0)+\tilde{u}(t, 0)-\frac{b(t)}{\gamma}) dy \biggr| \nonumber \\
\leq & \frac{1}{\gamma}|b_t(t)| |s(t)| \biggl (|\tilde{u}_y(t)|_H + \biggl |\tilde{u}(t, 0)-\frac{b(t)}{\gamma} \biggr| \biggr).
\tag{4.21}
%\leq & \frac{1}{2s(t)}|\tilde{u}_y(t)|^2_H + \frac{1}{2} s^3(t) \biggl (\frac{|b_t(t)|}{\gamma} \biggr)^2+ \frac{\beta \gamma}{2}|\tilde{u}(t, 0)-\frac{b(t)}{\gamma}|^2 + \frac{1}{2\beta \gamma} |s(t)|^2 \biggl (\frac{|b_t(t)|}{\gamma} \biggr)^2
\end{align}
Also, by Lemma \ref{lem11} we note that $\sigma(\tilde{u}(t, 1))s_t(t)=\tilde{u}(t, 1)s_t(t)$.  Moreover, we observe that
\begin{align}
\label{4-21}
& \int_0^1y s_t(t) \tilde{u}_y(t) \biggl (\tilde{u}(t)-\frac{b(t)}{\gamma} \biggr) dy \nonumber \\
& = s_t(t) \int_0^1\frac{1}{2} \biggl( \frac{\partial}{\partial y} \biggl (y(\tilde{u}(t)-\frac{b(t)}{\gamma})^2\biggr)-(\tilde{u}(t)-\frac{b(t)}{\gamma})^2 \biggr)dy \nonumber \\
&=\frac{s_t(t)}{2}\biggl (\tilde{u}(t, 1)-\frac{b(t)}{\gamma}\biggr)^2-\frac{s_t(t)}{2}\int_0^1 \biggl (\tilde{u}(t)-\frac{b(t)}{\gamma}\biggr)^2 dy.
\tag{4.22} 
\end{align}
Hence, by (\ref{4-20}) and (\ref{4-21}) we have 
 \begin{align}
\label{4-22}
& \frac{1}{2} \frac{d}{dt} \biggl (s(t) \biggl |\tilde{u}(t)-\frac{b(t)}{\gamma} \biggr|^2_{H} \biggr) + \frac{1}{s(t)} \int_0^1 |\tilde{u}_y(t)|^2 dy \nonumber \\
& + \tilde{u}(t, 1)s_t(t) \biggl (\tilde{u}(t, 1)-\frac{b(t)}{\gamma} \biggr) -\frac{s_t(t)}{2}\biggl (\tilde{u}(t, 1)-\frac{b(t)}{\gamma}\biggr)^2 + \beta \gamma |\tilde{u}(t, 0)-\frac{b(t)}{\gamma}| ^2 \nonumber \\
\leq  & \frac{1}{\gamma}|b_t(t)||s(t)| \biggl (|\tilde{u}_y(t)|_H + \biggl |\tilde{u}(t, 0)-\frac{b(t)}{\gamma} \biggr| \biggr) \mbox{ for a.e. }t\in [0, T].
\tag{4.23}
\end{align}
Here, the third and forth terms of the left-hand side of (\ref{4-22}) is as follows:
\begin{align}
\label{4-23}
& \tilde{u}(t, 1)s_t(t) \biggl (\tilde{u}(t, 1)-\frac{b(t)}{\gamma} \biggr) -\frac{s_t(t)}{2}\biggl (\tilde{u}(t, 1)-\frac{b(t)}{\gamma}\biggr)^2 \nonumber \\
= & s_t(t)\tilde{u}^2(t, 1)-s_t(t)\tilde{u}(t, 1)\frac{b(t)}{\gamma}-\frac{s_t(t)}{2} \biggl (\tilde{u}^2(t, 1)-2\tilde{u}(t, 1)\frac{b(t)}{\gamma} + \biggl (\frac{b(t)}{\gamma} \biggr)^2 \biggr) \nonumber \\
= & \frac{s_t(t)}{2} \tilde{u}^2(t, 1)-\frac{s_t(t)}{2} \biggl (\frac{b(t)}{\gamma} \biggr)^2.
\tag{4.24}
\end{align}
Since $s_t(t)=a_0(\tilde{u}(t, 1)-\alpha s(t))$, it holds that 
\begin{align}
\label{4-24}
& \frac{s_t(t)}{2} \tilde{u}^2(t, 1) = \frac{\tilde{u}(t, 1)}{2} \left(\frac{|s_t(t)|^2}{a_0} + \alpha s(t) s_t(t)\right) \nonumber \\
&= \frac{1}{2a_0} \tilde{u}(t, 1)|s_t(t)|^2 + \frac{\alpha s(t)}{2} \left(\frac{|s_t(t)|^2}{a_0} + \alpha s(t) s_t(t)\right)  \nonumber \\
& =\frac{1}{2a_0} \tilde{u}(t, 1)|s_t(t)|^2 + \frac{\alpha s(t)}{2} \frac{|s_t(t)|^2}{a_0} + \frac{\alpha^2}{6} \frac{d}{dt}s^3(t).
%\geq - \frac{a_0\alpha}{2} s(t) |u(t, s(t))|^2 \geq - \frac{a_0\alpha}{2} l |u(t, s(t))|^2 
\tag{4.25}
\end{align} 
Note that the first and second terms in the right-hand side of (\ref{4-24}) are non-negative. 
Accordingly, by (\ref{4-23}) and (\ref{4-24}) we have that 
\begin{align}
\label{4-25}
& \tilde{u}(t, 1)s_t(t) \biggl (\tilde{u}(t, 1)-\frac{b(t)}{\gamma} \biggr) -\frac{s_t(t)}{2}\biggl (\tilde{u}(t, 1)-\frac{b(t)}{\gamma}\biggr)^2 \nonumber \\
\geq & \frac{\alpha^2}{6} \frac{d}{dt}s^3(t)-\frac{s_t(t)}{2} \biggl (\frac{b(t)}{\gamma} \biggr)^2 \nonumber \\
=  & \frac{\alpha^2}{6} \frac{d}{dt}s^3(t)-\frac{1}{2\gamma^2} \biggl (\frac{d}{dt}  (s(t)b^2(t))-2s(t)b(t)b_t(t) \biggr).
\tag{4.26}
\end{align}
Also, for the right-hand side of (\ref{4-22}) we have 
\begin{align}
\label{4-26}
& \frac{1}{\gamma}|b_t(t)||s(t)| \biggl (|\tilde{u}_y(t)|_H +|\tilde{u}(t, 0)-\frac{b(t)}{\gamma}| \biggr) \nonumber \\
\leq & \frac{1}{2s(t)}|\tilde{u}_y(t)|^2_H + \frac{1}{2} s^3(t) \biggl (\frac{|b_t(t)|}{\gamma} \biggr)^2+ \frac{\beta \gamma}{2}\biggl |\tilde{u}(t, 0)-\frac{b(t)}{\gamma} \biggr|^2 + \frac{1}{2\beta \gamma} |s(t)|^2 \biggl (\frac{|b_t(t)|}{\gamma} \biggr)^2.
%& - \beta (b(t)-\gamma \tilde{u}(t, 0)) \biggl (\tilde{u}(t, 0) -\frac{b^*}{\gamma} \biggr)= \beta(\gamma \tilde{u}(t, 0)-b^*+b^*-b(t)) \biggl (\tilde{u}(t, 0)-\frac{b^*}{\gamma} \biggr) \nonumber \\
%& = \beta \gamma \biggl |\tilde{u}(t, 0)-\frac{b^*}{\gamma} \biggr|^2 + \beta(b^*-b(t))\biggl (\tilde{u}(t, 0)-\frac{b^*}{\gamma} \biggr) \geq 0.
%& \geq \frac{\beta}{2\gamma l} |b(t)-\gamma u(t, 0)|^2 - \frac{\beta}{2\gamma s_*(T)}|b(t)|^2
%\frac{\beta\gamma}{2} |u(t, 0)|^2-\frac{\beta}{2\gamma} |b(t)|^2
\tag{4.27}
\end{align}
By combining (\ref{4-25}) and (\ref{4-26}) with (\ref{4-22}) we have 
\begin{align}
\label{4-27}
& \frac{1}{2} \frac{d}{dt} \biggl (s(t) \biggl |\tilde{u}(t)-\frac{b(t)}{\gamma} \biggr|^2_{H} \biggr) + \frac{1}{2s(t)} \int_0^1 |\tilde{u}_y(t)|^2 dy  \nonumber \\
& +\frac{\alpha^2}{6}\frac{d}{dt} s^3(t) - \frac{1}{2\gamma^2} \frac{d}{dt}  (s(t)b^2(t)) + \frac{\beta \gamma}{2}|\tilde{u}(t, 0)-\frac{b(t)}{\gamma}|^2  \nonumber \\
\leq & \frac{1}{\gamma^2} s(t)b(t)|b_t(t)| + \frac{1}{2} s^3(t) \biggl (\frac{|b_t(t)|}{\gamma} \biggr)^2 + \frac{1}{2\beta \gamma} |s(t)|^2 \biggl (\frac{|b_t(t)|}{\gamma} \biggr)^2
\mbox{ for a.e. }t\in [0, T]. 
\tag{4.28}
\end{align}
%Here, by the fact that $s_*(T)\leq s(t)$ for $t\in [0,T]$ and (1d), we note that 
 %it holds that 
%\begin{align*}
%& |u(t, s(t))|^2 = |\tilde{u}(t, 0)|^2 \leq C_e(|\tilde{u}_y(t)|_{L^2(0, 1)}|\tilde{u}(t)|_{L^2(0, 1)} + |\tilde{u}(t)|^2_{L^2(0, 1)} )\\
%& \leq C_e \left (|u_z(t)|_{L^2(0, s(t))}|u(t)|_{L^2(0, s(t))} + \frac{1}{s(t)} |u(t)|^2_{L^2(0, s(t))} \right) \\
%& \leq C_e \left (|u_z(t)|_{L^2(0, s(t))}|u(t)|_{L^2(0, s(t))} + \frac{1}{s_*(T)} |u(t)|^2_{L^2(0, s(t))} \right) 
%\end{align*}
Then, by integrating (\ref{4-27}) over $[0, t]$ for $t\in [0, T]$ we obtain that 
\begin{align}
\label{4-28}
& \frac{1}{2}s(t) \biggl |\tilde{u}(t)-\frac{b(t)}{\gamma} \biggr|^2_{H} + \int_0^t \frac{1}{2s(\tau)} |\tilde{u}_y(\tau)|^2_H d\tau + \frac{\alpha^2}{6} s^3(t) - \frac{1}{2\gamma^2} (s(t)b^2(t)) \nonumber \\
\leq &  \frac{1}{2}s_0 \biggl |\tilde{u}_0-\frac{b(0)}{\gamma} \biggr|^2_{H} + \frac{\alpha^2}{6} s^3_0 +\frac{1}{2\gamma^2} \int_0^t s^3(\tau) |b_t(\tau)|^2d\tau \nonumber \\
& +  \frac{b^*}{\gamma^2} \int_0^t s(\tau)|b_t(\tau)|d\tau + \frac{1}{2\beta \gamma^3} \int_0^t |s(\tau)|^2 |b_t(\tau)|^2 d\tau 
\mbox{ for }t\in [0, T].
\tag{4.29}
%& \frac{1}{2} \frac{d}{dt} |u(t)|^2_{L^2(0, s(t)} + \frac{1}{2}\int_0^{s(t)} |u_z(t)|^2 dz + \frac{\beta}{2\gamma} \int_0^t |b(\tau)-\gamma u(\tau, 0)|^2\\
%\leq & \biggl[ \frac{1}{2}\left( \frac{a_0\alpha}{2} l(T) \right)^2 C^2_e+ \frac{a_0\alpha}{2}\frac{C_e}{s_*(T)}l(T) \biggr]|u(t)|^2_{L^2(0, s(t)} + \frac{\beta}{2\gamma} |b(t)|^2 \mbox{ for a.e. }t\in [0, T]. 
\end{align}
Here, by Young's inequality it follows that 
\begin{align}
\label{4-29}
& \frac{\alpha^2}{6} s^3(t) -\frac{1}{2\gamma^2} (s(t)b^2(t)) \nonumber \\
\geq & \frac{\alpha^2}{6} s^3(t)-\biggl( \frac{\eta^3}{3} s^3(t) + \frac{2}{3\eta^{3/2}} \biggl(\frac{1}{2\gamma^2} (b^*)^2\biggr)^{3/2} \biggr),
\tag{4.30}
\end{align}
where $\eta$ is an arbitrary positive number. Therefore, by (\ref{4-28}) and (\ref{4-29}) we obtain
\begin{align}
\label{4-30}
& \biggl( \frac{\alpha^2}{6}-\frac{\eta^3}{3}\biggr) s^3(t) 
\leq \frac{2}{3\eta^{3/2}} \biggl(\frac{1}{2\gamma^2} (b^*)^2\biggr)^{3/2} \nonumber \\
& + \frac{1}{2}s_0 \biggl |\tilde{u}_0-\frac{b(0)}{\gamma} \biggr|^2_{H} + \frac{\alpha^2}{6} s^3_0 + \frac{1}{2\gamma^2} \int_0^t s^3(\tau) |b_t(\tau)|^2d\tau  \nonumber \\
& +  \frac{b^*}{\gamma^2} \int_0^t s(\tau)|b_t(\tau)|d\tau + \frac{1}{2\beta \gamma^3} \int_0^t |s(\tau)|^2 |b_t(\tau)|^2 d\tau 
\mbox{ for }t\in [0, T]. 
\tag{4.31}
\end{align}
We put $M(\eta)=(s_0|\tilde{u}_0-\frac{b(0)}{\gamma}|^2_H)/2 + (\alpha^2s^3_0)/6+(2(\frac{1}{2\gamma^2} (b^*)^2)^{3/2}))/3\eta^{3/2}$. Then, by taking a suitable $\eta=\eta_0$ and $J_1(t)=\int_0^t s^3(\tau)|b_t(\tau)|^2 d\tau$, $J_2(t)=\int_0^t s(\tau)|b_t(\tau)|d\tau$ and $J_3(t)=\int_0^t |s(\tau)|^2 |b_t(\tau)|^2 d\tau$ for $t\in [0, T]$ we derive that 
\begin{align}
\label{4-31}
\frac{\alpha^2}{12 }s^3(t) \leq M(\eta_0) + \frac{1}{2\gamma^2} J_1(t) + \frac{b^*}{\gamma^2} J_2(t) + \frac{1}{2\beta \gamma^3} J_3(t) 
\mbox{ for }t\in [0, T]. 
\tag{4.32}
\end{align}
Then, we have that 
\begin{align*}
J'_1(t) & \leq \frac{12M(\eta_0)}{\alpha^2}|b_t(t)|^2 + \frac{6}{(\gamma \alpha)^2}|b_t(t)|^2 J_1(t) \nonumber \\
& +\frac{12}{\alpha^2}\biggl (\frac{b^*}{\gamma^2} J_2(t)+ \frac{1}{2\beta \gamma^3} J_3(t) \biggr)|b_t(t)|^2 \mbox{ for }t\in [0, T].
\end{align*}
Hence, by Gronwall's inequality we obtain that 
\begin{align}
\label{4-32}
J_1(t)& \leq \biggl[ \frac{12M(\eta_0)}{\alpha^2}|b_t|^2_{L^2(0, T)} \nonumber \\
& + \frac{12}{\alpha^2}\biggl (\frac{b^*}{\gamma^2} J_2(t) + \frac{1}{2\beta \gamma^3} J_3(t) \biggr)|b_t|^2_{L^2(0, T)}  \biggr] e^{\frac{6}{(\gamma \alpha)^2}|b_t|^2_{L^2(0, T)}} \mbox{ for }t\in [0, T].
\tag{4.33}
\end{align}
Here, we put $N(T)=\frac{12}{\alpha^2}|b_t|^2_{L^2(0, T)}e^{\frac{6}{(\gamma \alpha)^2}|b_t|^2_{L^2(0, T)}}$. Then, by (\ref{4-31}) and (\ref{4-32}) we obtain that 
\begin{align}
\label{4-33}
& \frac{\alpha^2}{12} s^3(t) \leq M(\eta_0) + \frac{1}{2\gamma^2} M(\eta_0)N(T) \nonumber \\
& + \frac{b^*}{\gamma^2} (1+\frac{N(T)}{2\gamma^2})J_2(t) + \frac{1}{2\beta \gamma^3} (1+\frac{N(T)}{2\gamma^2} )J_3(t)
\mbox{ for }t\in [0, T].
\tag{4.34}
\end{align}
Now, we put $l(T)=\max_{0\leq t\leq T}|s(t)|$. Then, we have 
\begin{align*}
& \frac{b^*}{\gamma^2} (1+\frac{N(T)}{2\gamma^2})J_2(t) \leq  \frac{b^*}{\gamma^2} (1+\frac{N(T)}{2\gamma^2})l(T)|b_t|_{L^1(0, T)} \\
\leq & \frac{\eta^3}{3}l^3(T) + \frac{2}{3\eta^{3/2}} \biggl( \frac{b^*}{\gamma^2} (1+\frac{N(T)}{2\gamma^2}) |b_t|_{L^1(0, T)} \biggr)^{3/2},
\end{align*}
and 
\begin{align*}
& \frac{1}{2\beta \gamma^3} (1+\frac{N(T)}{2\gamma^2} )J_3(t) \leq \frac{1}{2\beta \gamma^3} (1+\frac{N(T)}{2\gamma^2}) l^2(T) |b_t|^2_{L^2(0, T)} \\
\leq & \frac{2\eta^{3/2}}{3}l^3(T) + \frac{1}{3\eta^3} \biggl( \frac{1}{2\beta \gamma^3} (1+\frac{N(T)}{2\gamma^2} ) |b_t|^2_{L^2(0, T)} \biggr)^{3}. 
\end{align*}
Hence, by adding these estimates to (\ref{4-33}) and taking a suitable $\eta=\eta_0$ 
%we have that 
%\begin{align*}
%& \frac{1}{2}\biggl( \frac{\alpha^2}{6} - \frac{1}{2\gamma^2} |b_t|^2_{L^2(0, T)}\biggr) L^3(T) \leq \frac{1}{2}s_0 \biggl |\tilde{u}_0-\frac{b(t)}%{\gamma} \biggr|^2_{H} + \frac{\alpha^2}{6} s^3_0 \\
%& + \frac{2}{3\eta^{3/2}_0} \biggl( \frac{1}{2\gamma^2} |b_t|_{L^1(0, T)} \biggr)^{3/2} + \frac{1}{3\eta^3_0} \biggl( \frac{1}{2\beta \gamma^3} |b_t|_{L^1(0, T)} \biggr)^{3} + \frac{2}{3\eta^{3/2}_0} \biggl(\frac{1}{2\gamma^2} (b^*)^2\biggr)^{3/2}. 
%\end{align*}
%Therefore, 
we see that there exists a positive constant $M$ which depends on $\beta$, $\gamma$, $\alpha$, $b^*$, $s_0$, $|b_t|_{L^2(0, T)}$ and $|b_t|_{L^1(0, T)}$ such that $s(t)\leq M$ for $t \in [0, T]$. 
%Next, we show (ii). Finally, This result is proved by repeating the argument of the proof of Lemma \ref{lem9} replaced $L(T)$ and $u^*(T)$ by $M$ and $u^*$, respectively, we see that (ii) holds. Thus, Lemma \ref{lem7} is proven.
%\end{proof}
%Finally, from this result of the maximal length of the free boundary $s$ and (\ref{4-28}) we can see that (i) holds. 
%From this result and the integration (\ref{4-19-5}) over $[0, t]$, we see that there exists a positive constant $M$ such that (i) holds. 

%\begin{comment}
Next, we show (ii). This result is proved by the argument of the proof of Lemma \ref{lem9} replaced $l(T)$ and $u^*(T)$ by $M$ and $u^*$, respectively. We put $U(t, y)=[\tilde{u}(t, y)-u^*]^+$ for $y \in [0, 1]$ and $t\in [0, T]$. Then, by (\ref{4-15-0}) it holds that 
\begin{align}
\label{4-34}
& \frac{1}{2} \frac{d}{dt}|U(t)|^2_{H}+ \frac{1}{s^2(t)} \int_0^1 |U_y(t)|^2 dy + \frac{1}{s(t)} \tilde{u}(t, 1)s_t(t)U(t, 1) \nonumber \\
& -\frac{1}{s(t)}\beta (b(t)-\gamma \tilde{u}(\cdot, 0))U(t, 0)= \int_0^1\frac{y s_t(t)}{s(t)} \tilde{u}_y(t)U(t) dy
\mbox{ for a.e. }t\in [0, T].
\tag{4.35}
\end{align}
Here, by $\tilde{u}(t)\geq 0$ on $[0, 1]$ for $t\in [0,T]$ we note that  $s_t(t)=a_0(\sigma(\tilde{u}(t, 1))-\alpha s(t))=a_0(\tilde{u}(t, 1)-\alpha s(t))$. Then, we observe that 
\begin{align*}
& \int_0^1\frac{y s_t(t)}{s(t)} \tilde{u}_y(t)U(t) dy = \frac{s_t(t)}{2s(t)} \int_0^1 \biggl (\frac{d}{dy}(yU^2(t))-U^2(t) \biggr) dy \\
& =\frac{s_t(t)}{2s(t)}|U(t, 1)|^2 -\frac{s_t(t)}{2s(t)}|U(t)|^2_H \leq \frac{s_t(t)}{2s(t)}|U(t, 1)|^2 + \frac{a_0\alpha }{2}|U(t)|^2_H.
\end{align*}
Similarly to (\ref{3-45}), the forth term in the left-hand side is non-negative so that 
\begin{align}
\label{4-35}
& \frac{1}{2} \frac{d}{dt}|U(t)|^2_{H}+ \frac{1}{s^2(t)} \int_0^1 |U_y(t)|^2 dy + \frac{1}{s(t)} \tilde{u}(t, 1)s_t(t)U(t, 1)-\frac{s_t(t)}{2s(t)}|U(t, 1)|^2  \nonumber \\
& \leq \frac{a_0\alpha}{2}|U(t)|^2_H
\mbox{ for a.e. }t\in [0, T].
\tag{4.36}
\end{align}
Here, by $u^*\geq \alpha M \geq \alpha s(t)$ for $t\in [0, T]$, it holds that 
\begin{align*}
& \frac{s_t(t)}{s(t)}\tilde{u}(t, 1)U(t, 1)-\frac{s_t(t)}{2s(t)}|U(t, 1)|^2 = \frac{s_t(t)}{2s(t)}|U(t, 1)|^2 + \frac{s_t(t)}{s(t)}u^*U(t, 1)\\
\geq & \frac{a_0(u^*-\alpha s(t))}{s(t)}(\frac{|U(t, 1)|^2}{2}+ u^*U(t, 1)) \geq 0.
\end{align*}
%For the third term in the left-hand side of (\ref{4-20}) we can write 
%By noting from $\sup_{r\in \mathbb{R}}\varphi(r)\leq |h|_{L^{\infty}(0, T)}H^{-1}$ in (A4) that
%\begin{align*}
%& -u_z(t, s(t))U(t, s(t)) = 
%\frac{1}{s(t)}\tilde{u}(t, 1)s_t(t) U(t, 1) & = \frac{1}{s(t)}(s_t(t) \left(\tilde{u}(t, 1)-u^* \right)U(t, 1) + s_t(t) u^* U(t, 1)) \\  
%& =\frac{s_t(t)}{s(t)}|U(t, s(t))|^2 + \frac{s_t(t)}{s(t)}u^* U(t, s(t)).
%\end{align*}
%\begin{align*}
%& \int_0^1\frac{y s_t(t)}{s(t)} \tilde{u}_y(t)U(t) dy = \frac{s_t(t)}{2} \int_0^1 \biggl (\frac{d}{dy}(yU^2(t))-U^2(t) \biggr) dy \\
%& =\frac{s_t(t)}{2}|U(t, 1)|^2 -\frac{s_t(t)}{2}|U(t)|^2_H 
%\end{align*}
%by (\ref{1-3}) and $b\leq b^*$, we observe that 
%\begin{align*}
%& -\frac{1}{s(t)}\beta(b(t)-\gamma \tilde{u}(t, 0))U(t, 0) = \frac{1}{s(t)}\beta (\gamma \tilde{u}(t, 0)-b^* + b^*-b(t))U(t, 0) \\
%\geq  & \frac{\beta \gamma}{s(t)} |U(t, 0)|^2 + \frac{\beta}{s(t)}(b^*-b(t))U(t, 0) \geq 0.
%\end{align*}
By applying the result to (\ref{4-35}) we obtain that 
\begin{align*}
& \frac{1}{2}\frac{d}{dt}\int_0^1|U(t)|^2 dy + \frac{1}{2s^2(t)}\int_0^1 |U_y(t)|^2 dy \leq \frac{a_0\alpha}{2}|U(t)|^2_H \mbox{ for a.e. }t\in[0, T].
%\label{4-36}
\end{align*}
%Here, by $s_t(t)=a_0(\sigma(u(t, s(t)))-\alpha s(t))$ we notice that 
%\begin{align*}
%\frac{s_t(t)}{2}|U(t, s(t))|^2 \geq \frac{a_0(u^*-\alpha s(t))}{2}|U(t, s(t))|^2 \geq \frac{a_0(u^*-\alpha M)}{2}|U(t, s(t))|^2\geq 0.
%\end{align*}
%This implies that 
This implies that $\tilde{u}(t) \leq u^*$ on $[0, 1]$ for $t\in [0, T]$. 
%\end{comment}
Thus, Lemma \ref{lem12} is now proven.
\end{proof}
%\end{comment}

Combining the statements of Lemma \ref{lem11} and Lemma \ref{lem12}, we can conclude that Theorem \ref{t2} holds.

\section*{Acknowledgements} The first author is supported by Grant-in-Aid No. 20K03704, JSPS. 
The second author is supported by Grant-in-Aid No. 19K03572, JSPS.

\medskip
\end{document}